\numberwithin{equation}{section}
\theoremstyle{definition}
\newtheorem{dfn}{Definition}[section]
\newtheorem{example}[dfn]{Example}
\newtheorem{rem}[dfn]{Remark}
\theoremstyle{plain}
\newtheorem{thm}[dfn]{Theorem}
\newtheorem{prp}[dfn]{Proposition}
\newtheorem{cor}[dfn]{Corollary}
\newtheorem{lem}[dfn]{Lemma}
\newtheorem*{thmB}{Theorem A}       %
\newtheorem*{thmC}{Theorem B}       %
\title{Multiparameter Twisted Weyl Algebras}
\author{Vyacheslav Futorny \and Jonas T. Hartwig}
\address{Instituto de Matem\'{a}tica e Estat\'{i}stica, Universidade de S\~ao Paulo, S\~ao Paulo, Brasil}
\email{futorny@ime.usp.br}
\address{Department of Mathematics, Stanford University, Stanford, CA, USA}
\email{jonas.hartwig@gmail.com}
\newcommand\al{\alpha}\newcommand\be{\beta} \newcommand\ga{\gamma}
\newcommand\ep{\varepsilon}
\newcommand\ze{\zeta}   
 \newcommand\la{\lambda}\newcommand\La{\Lambda}
  \newcommand\si{\sigma}\newcommand\ta{\tau}   
     \newcommand\Om{\Omega}
 \newcommand\mf{\mathfrak{m}}\newcommand\nf{\mathfrak{n}}
\newcommand\C{\mathbb{C}}\newcommand\Z{\mathbb{Z}}
\newcommand\K{\Bbbk}
\newcommand\Max{\mathrm{Max}}    \newcommand\Supp{\mathrm{Supp}}
\newcommand\Specm{\mathrm{Specm}}
\newcommand\Ann{\mathrm{Ann}} \newcommand\Stab{\mathrm{Stab}}
\newcommand{\TGWC}[4]{\mathcal{C}_{#4}({#1},{#2},{#3})}  
\newcommand{\TGWA}[4]{\mathcal{A}_{#4}({#1},{#2},{#3})}
\newcommand{\TGWI}[4]{\mathcal{I}_{#4}({#1},{#2},{#3})}
\newcommand{\MTWA}[5]{A_{#1}^{#2}({#3},{#4},{#5})}  
\newcommand{\PM}[2]{\mathrm{PM}_{#1}({#2})}
\renewcommand{\hat}{\widehat}
\renewcommand{\tilde}{\widetilde}
\newcommand{\TGW}[2]{\mathsf{TGW}_{#1}({#2})}
\newcommand{\Af}[1]{\mathcal{A}_{#1}}
\newcommand{\un}{\underline}
\DeclareMathOperator{\Aut}{Aut}
\date{}
\begin{document}
\maketitle
\begin{abstract}
We introduce a new family of twisted generalized Weyl algebras, called
multiparameter twisted Weyl algebras, for which we parametrize
all simple quotients of a certain kind. Both Jordan's simple localization
of the multiparameter quantized Weyl algebra and Hayashi's $q$-analog of the Weyl algebra
are special cases of this construction. We classify all simple weight
modules over any multiparameter twisted Weyl algebra.
Extending results by Benkart and Ondrus, we also describe all Whittaker pairs up to
isomorphism over a class of twisted generalized Weyl algebras which includes the
multiparameter twisted Weyl algebras.
\end{abstract}

\tableofcontents

\section{Introduction}


Let $R$ be an algebra, $\si_1,\ldots,\si_n$  commuting algebra automorphisms of $R$,
 $t_1,\ldots,t_n$  elements from the center of $R$, and
$\mu_{ij}$ an $n\times n$ matrix of invertible scalars. To these data one associates
a \emph{twisted generalized Weyl algebra} $\TGWA{R}{\si}{t}{\mu}$, an associative
$\Z^n$-graded algebra (see Section \ref{sec:tgwadef} for definition). These algebras were
introduced by Mazorchuk and Turowska \cite{MT99} and they are generalizations of the
much studied generalized Weyl algebras, defined independently by Bavula \cite{B},
 Jordan \cite{Jordan1993}, and Rosenberg \cite{Rb} (called there \emph{hyperbolic rings}).

Simple weight modules over twisted generalized Weyl algebras have been studied in \cite{MT99},\cite{MPT},\cite{Ha06}.
In \cite{MT02} the authors classified bounded and unbounded $\ast$-representations over
twisted generalized Weyl algebras.
Interesting examples of twisted generalized Weyl algebras were given in \cite{MPT}.
In \cite{Ha09} new examples of twisted generalized Weyl algebras were constructed
from symmetric Cartan matrices.

In this paper we define  a family of twisted generalized Weyl algebras, called \emph{multiparameter twisted Weyl algebras}. This definition was inspired by an unpublished note by Benkart \cite{Be} where \emph{multiparameter Weyl algebras} were introduced.  These algebras are a particular case of the algebras of our class.

\emph{Multiparameter quantized Weyl algebras} $A_n^{\bar q,\Lambda}$
were introduced in \cite{M} as a generalization of the quantized Weyl algebras obtained
by Pusz and Woronowicz \cite{PW} in the context of quantum group
covariant differential calculus. They are examples of twisted generalized Weyl algebras.
Contrary to the usual Weyl algebras the algebra, $A_n^{\bar q,\Lambda}$
is in general not simple, even for generic parameters.
Jordan \cite{J} found a certain natural simple localization of $A_n^{\bar q,\Lambda}$.

The first main theorem of the paper parametrizes simple quotients of multiparameter twisted Weyl algebras in terms of maximal ideals of certain Laurent polynomial rings.
Jordan's localization of $A_n^{\bar q,\Lambda}$ is an example in this family,
as well as Hayashi's $q$-deformed Weyl algebras \cite{H90}.

\begin{thmB}
Let $A=\MTWA{n}{k}{r}{s}{\La}$ be a multiparameter twisted Weyl algebra.
\begin{enumerate}[{\rm (a)}]
\item
The assignment
\begin{equation}
 \nf\mapsto A/ \langle \nf \rangle
\end{equation}
where $\langle \nf\rangle$ denotes the ideal in $A$ generated by $\nf$,
is a bijection between the set of maximal ideals in the invariant subring $R^{\Z^n}$
and the set of simple quotients of $A$ in which all $X_i,\, Y_i\,(i=1,\ldots,n)$ are regular.
 
\item For any $\nf\in\Specm(R^{\Z^n})$, the quotient $A/\langle \nf \rangle$ is isomorphic
to the twisted generalized Weyl algebra
 $\TGWA{R/R\nf}{\bar \si}{\bar t}{\mu}$,
 where $\bar\si_g(r+R\nf)=\si_g(r)+R\nf, \,  \forall g\in\Z^n,r\in R$
 and $\bar t_i=t_i+R\nf, \, \forall i$.

\item
$A/\langle \nf \rangle$ is a domain for all $\nf\in\Specm(R^{\Z^n})$ if and only if
$\Z^{2n} / G$ is torsion-free, where $G$ is the gradation group of $R^{\Z^n}$.
\end{enumerate}
\end{thmB}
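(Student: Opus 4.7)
The plan is to combine functoriality of the TGWA construction (for parts (a) and (b)) with a structural analysis of the degree-zero component (for part (c)).

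For part (b), the first observation is that $\nf \subseteq R^{\Z^n}$ consists of $\si_g$-fixed elements, so the extended ideal $R\nf \subseteq R$ is $\si_g$-stable for every $g \in \Z^n$. Hence the commuting automorphisms descend to $\bar\si_g$ on $\bar R := R/R\nf$, and the $t_i$ descend to central elements $\bar t_i$. By the functorial nature of the TGWA construction, the surjection $R \twoheadrightarrow \bar R$ induces a surjection $\pi \colon A \twoheadrightarrow \TGWA{\bar R}{\bar\si}{\bar t}{\mu}$ containing $\langle \nf\rangle$ in its kernel. To prove equality, I would use the $\Z^n$-grading: in a TGWA the degree-zero component coincides with the base ring, so $\ker\pi \cap R = R\nf$, and then a homogeneous-component argument extends this identification to all of $\ker\pi$, giving $\ker\pi = \langle\nf\rangle$.

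For part (a), the forward assignment produces $A/\langle\nf\rangle \cong \TGWA{\bar R}{\bar\si}{\bar t}{\mu}$ by (b). Simplicity (given regularity of $\bar X_i, \bar Y_i$) should follow from a standard criterion for TGWAs: the invariants $\bar R^{\Z^n} = R^{\Z^n}/\nf$ are a field, so there are no nonzero proper graded ideals, and regularity of the generators promotes this to no nonzero proper ideals at all. For the reverse direction, given any simple quotient $A/J$ in which the images of $X_i, Y_i$ are regular, the intersection $\nf' := J \cap R^{\Z^n}$ is an ideal of $R^{\Z^n}$; the degree-$0$ component of $A/J$ embeds the quotient $R^{\Z^n}/\nf'$ into a field-like object forced by the simple-and-regular hypothesis, whence $\nf'$ is maximal. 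Then part (b) applied to $\nf'$ gives $A/\langle\nf'\rangle$ as a simple quotient through which $A \twoheadrightarrow A/J$ factors, forcing $J = \langle\nf'\rangle$, so the map is a bijection.

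For part (c), the point is that $R$ is $\Z^{2n}$-graded, with $R^{\Z^n}$ graded by the subgroup $G \subseteq \Z^{2n}$. Assume first that $\Z^{2n}/G$ is torsion-free. For any maximal $\nf \subset R^{\Z^n}$, the quotient $R/R\nf$ admits a $(\Z^{2n}/G)$-grading as a twisted group algebra over the field $R^{\Z^n}/\nf$, and the group algebra of a torsion-free abelian group over a field is a domain. The TGWA $A/\langle\nf\rangle$ built over this domain, with regular $X_i, Y_i$, is then a domain via the usual iterated Ore/skew-Laurent description of TGWAs. Conversely, if $\Z^{2n}/G$ has torsion, one picks a character of $G$ extending to two distinct characters on a torsion element, and constructs a maximal $\nf$ whose quotient $R/R\nf$ carries explicit zero divisors; these persist inside $A/\langle\nf\rangle$ since the base ring sits in the degree-zero component.

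The step I expect to require the most care is the reverse direction of (a), namely showing that a simple quotient $A/J$ with regular $X_i, Y_i$ is already generated by $J \cap R^{\Z^n}$. This amounts to an ideal-intersection principle for TGWAs (every ideal is controlled by its contraction to the invariant subring of the base), and it is usually the crux of such classification theorems; everything else above reduces to clean functoriality or standard graded-algebra techniques.
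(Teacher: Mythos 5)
Your overall architecture matches the paper's, and you correctly identify the crux of (a) — an ideal-intersection principle for TGWAs — but the sketch has a genuine gap precisely at the point where you claim ``regularity of the generators promotes [graded simplicity] to no nonzero proper ideals at all.'' That promotion is false for graded rings in general: a nonzero (non-graded) ideal need not contain any nonzero graded ideal, so absence of proper graded ideals does not imply simplicity. The paper closes this gap in two complementary ways, neither of which is in your sketch. First, Proposition \ref{prp:ideals_are_graded} proves that \emph{every} ideal of $A$ is graded, and the proof relies on Proposition \ref{prp:maxcomm}(a) (the $\Z^n$-action on $R$ is faithful, even after passing to quotients), which in turn uses the hypothesis that $r_{ii}/s_{ii}$ is not a root of unity. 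Second, the forward simplicity claim in (a) is obtained from a genuine simplicity criterion for finitistic TGWAs of Lie type $(A_1)^n$ (Theorem \ref{thm:oinert}(\ref{it:tgwasimplicity2}), imported from [HO]), which requires verifying not only $\Z^n$-simplicity of $R/J$ and $Z(\bar A)\subseteq R/J$ (via maximal commutativity, Proposition \ref{prp:maxcomm}(b)), but also the coprimality condition $R t_i + R\si_i^d(t_i) = R$ for all $d>0$. That last condition is Lemma \ref{lem:Jid} and is exactly where the non-root-of-unity hypothesis enters in a quantitative way; without it, simplicity fails even when $R/J$ is $\Z^n$-simple. Your sketch never verifies anything of this kind, so the forward direction of (a) does not close.

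Two further, smaller points. In the reverse direction of (a) you say the degree-$0$ component of $A/J$ ``embeds $R^{\Z^n}/\nf'$ into a field-like object,'' but $R/(R\cap J)$ is typically a Laurent polynomial ring, not a field; what you actually need is that $R\cap J$ is a \emph{maximal $\Z^n$-invariant} ideal of $R$, combined with the order isomorphism between $\Z^n$-invariant ideals of $R$ and ideals of $R^{\Z^n}$ (Proposition \ref{prp:ideal_bijection}). Establishing that $R\cap J$ is $\Z^n$-invariant uses regularity of the $X_i$ in $A/J$, and establishing maximality uses that every ideal of $A$ is graded (again Proposition \ref{prp:ideals_are_graded}). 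In part (c) your two directions are essentially the paper's, with your ``iterated Ore/skew-Laurent description'' playing the role of Proposition \ref{prp:domain}; the only thing to make sure of is that the reduction to primality of $R\nf$ passes through part (b) first, which the paper records explicitly. So: right skeleton, right identification of the crux, but the simplicity step in (a) — the very thing you flag as delicate — is asserted rather than argued, and the argument needed there is exactly where the hypothesis \eqref{eq:rscond1} does its work.
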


The second main theorem of the paper gives the explicit relation between four twisted generalized Weyl
algebras, namely the multiparameter quantized Weyl algebra $A_n^{\bar q,\Lambda}$, Jordan's localization
$B_n^{\bar q, \Lambda}$, a specific multiparameter twisted Weyl algebra $\MTWA{n}{k}{r}{s}{\Lambda}$ that we define,
and a certain quotient $\hat{\MTWA{n}{k}{r}{s}{\Lambda}}$ of it which is simple and isomorphic to $B_n^{\bar q, \Lambda}$.
\begin{thmC} We have a commutative diagram in the category of $\Z^n$-graded algebras:
\[
\xymatrix@C=0.5cm@M=1ex{
\MTWA{n}{k}{r}{s}{\Lambda}  \ar@{->>}[d] \ar@{->>}[drr]
   &&                       \\
\hat{\MTWA{n}{k}{r}{s}{\Lambda}}
   \ar@<.5ex>@{->}[rr]^{\simeq}
   &&  B_n^{\bar q,\Lambda} \ar@<.5ex>@{->}[ll]   \\
                          &&  A_n^{\bar q,\Lambda} \ar@{^{(}->}[ull] \ar@{^{(}->}[u] 
}\]
\end{thmC}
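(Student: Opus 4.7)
The plan is to establish each arrow in the diagram separately and then verify commutativity on a set of algebra generators.

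First I would write down explicit presentations of all four algebras. The multiparameter quantized Weyl algebra $A_n^{\bar q,\Lambda}$ has the standard Maltsiniotis generators $x_i,y_i$ with the quadratic $\bar q,\Lambda$-relations, while Jordan's localization $B_n^{\bar q,\Lambda}$ adjoins inverses to the distinguished central-like elements $z_i=1+(q_i-1)y_ix_i$ (or the appropriate analogue). The algebra $\MTWA{n}{k}{r}{s}{\Lambda}$ has generators $X_i,Y_i$ together with the Laurent generators of $R$, as specified in the construction section of the paper, and by definition $\hat{\MTWA{n}{k}{r}{s}{\Lambda}}$ is the quotient by $\langle \nf\rangle$ for a particular maximal ideal $\nf\subset R^{\Z^n}$ which I would single out, namely the augmentation-type ideal that kills the extra Laurent variables introduced by passing from $B_n^{\bar q,\Lambda}$ to the larger ground ring.

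Next I would construct the two embeddings originating at $A_n^{\bar q,\Lambda}$. The embedding $A_n^{\bar q,\Lambda}\hookrightarrow B_n^{\bar q,\Lambda}$ is Jordan's classical localization map, so no new work is required beyond quoting it. For $A_n^{\bar q,\Lambda}\hookrightarrow \MTWA{n}{k}{r}{s}{\Lambda}$ I would send $x_i\mapsto X_i$ and $y_i\mapsto Y_i$, and verify that the defining $\bar q,\Lambda$-relations follow from the twisted generalized Weyl relations in $\MTWA{n}{k}{r}{s}{\Lambda}$ once $t_i$ and $\mu_{ij}$ are specialised to the values dictated by the chosen parameters $k,r,s,\Lambda$; this is a finite check on each relation type (commutation of $X_i$'s, of $Y_i$'s, and the mixed $Y_iX_j$ relations). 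Injectivity follows from the $\Z^n$-gradation: both algebras have free $R$-module decompositions in each degree, and the image of a reduced monomial $y^\be x^\al$ lands in a distinct graded component and equals $Y^\be X^\al$ up to a unit of $R$, so a degree-by-degree comparison with the PBW basis of $A_n^{\bar q,\Lambda}$ shows the map is injective.

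Then I would identify $\hat{\MTWA{n}{k}{r}{s}{\Lambda}}$ with $B_n^{\bar q,\Lambda}$. By Theorem~A(b), the quotient $\MTWA{n}{k}{r}{s}{\Lambda}/\langle \nf\rangle$ is again a twisted generalized Weyl algebra over $R/R\nf$ with the induced data $\bar\si,\bar t,\mu$. Choosing $\nf$ as above, $R/R\nf$ becomes precisely the Laurent polynomial ring in the $z_i^{\pm 1}$ that appears in Jordan's construction, and the specialised $\bar t_i$ recover the elements whose images are $z_i$ and $\bar\si_i(z_i)$; the TGWA relations then match Jordan's presentation of $B_n^{\bar q,\Lambda}$ term by term, producing the two mutually inverse horizontal arrows. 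Simplicity of $B_n^{\bar q,\Lambda}$ (proved by Jordan) is consistent with Theorem~A(a). Commutativity of the diagram is finally checked on generators: the composition $A_n^{\bar q,\Lambda}\hookrightarrow \MTWA{n}{k}{r}{s}{\Lambda}\twoheadrightarrow \hat{\MTWA{n}{k}{r}{s}{\Lambda}}\xrightarrow{\simeq} B_n^{\bar q,\Lambda}$ sends $x_i,y_i$ to the same elements as Jordan's inclusion, which is immediate from the chosen identifications.

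The main obstacle I anticipate is pinning down the precise maximal ideal $\nf$ and the precise specialisation of $t_i,\mu_{ij}$ that makes the TGWA quotient match $B_n^{\bar q,\Lambda}$ exactly rather than only up to a twist, and then verifying that the embedding $A_n^{\bar q,\Lambda}\hookrightarrow \MTWA{n}{k}{r}{s}{\Lambda}$ is genuinely injective, since the multiparameter twisted Weyl algebra is larger than its simple quotient and one must rule out collapse on the non-simple side. This reduces to checking that no nonzero element of $A_n^{\bar q,\Lambda}$ lies in the kernel of the surjection to $B_n^{\bar q,\Lambda}$ (which is Jordan's result) combined with exhibiting a section on graded components of $\MTWA{n}{k}{r}{s}{\Lambda}$; once this is in place, the diagram is forced.
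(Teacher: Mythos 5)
There is a genuine gap, and it begins with a misreading of the diagram. The diagram contains no arrow $A_n^{\bar q,\Lambda}\to \MTWA{n}{k}{r}{s}{\Lambda}$; the two hooked arrows emanating from $A_n^{\bar q,\Lambda}$ land in $\hat{\MTWA{n}{k}{r}{s}{\Lambda}}$ and in $B_n^{\bar q,\Lambda}$ respectively, not in the unquotiented algebra. This is not merely a notational slip: the map you propose ($x_i\mapsto X_i$, $y_i\mapsto Y_i$, hence $s_i=y_ix_i\mapsto Y_iX_i=t_i$) is not even a well-defined algebra homomorphism $A_n^{\bar q,\Lambda}\to\MTWA{n}{k}{r}{s}{\Lambda}$. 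To see this, note that $\Z^n$-equivariance of the underlying map of base rings $P\to R$ would force $1+q_it_i+\sum_{k<i}(q_k-1)t_k = \si_i(t_i)$ in $R$, and already for $i=1$ with the paper's choice $t_1=\frac{u_1-q_1^{-1}v_1}{1-q_1^{-1}}$, $\si_1(t_1)=\frac{u_1-v_1}{1-q_1^{-1}}$, this reduces to $u_1=-q_1^{-1}$, which is false in the Laurent ring $R$. It becomes true only after quotienting by $J=R\nf$ with the specific $\nf=(w_1-q_1^{-1},\ldots,w_n-q_n^{-1})$, which is exactly why $A_n^{\bar q,\Lambda}$ maps into the quotient $\hat{\MTWA{n}{k}{r}{s}{\Lambda}}$ but not into $\MTWA{n}{k}{r}{s}{\Lambda}$ itself. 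Your claim that such an embedding exists, and the downstream commutativity argument which composes it with the quotient map, therefore fail.

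A second, smaller gap is in the identification $\hat{\MTWA{n}{k}{r}{s}{\Lambda}}\simeq B_n^{\bar q,\Lambda}$. Matching the TGWA relations term by term at best constructs a surjection; it does not by itself produce the inverse arrow. The paper instead constructs a surjective morphism $\psi$ of TGW data $(R,\si,t)\to(S^{-1}P,\tilde\tau,s)$ explicitly (sending $u_i\mapsto -q_i^{-1}z_{i-1}$, $v_i\mapsto -z_i$), checks $J\subseteq\ker\psi$ to obtain $\Psi:R/J\to S^{-1}P$, applies the functor $\Af{\mu}$, and then invokes Theorem A to conclude that $\TGWA{R/J}{\bar\si}{\bar t}{\mu}$ is simple, so the induced surjective algebra map must be an isomorphism. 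Your proposal would need some substitute for this simplicity argument to get injectivity, and it currently has none.
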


We end the introduction with an overview of the content of this paper.
In Sections \ref{sec:finitistic} and \ref{sec:A1n}, we first consider 
certain families of twisted generalized Weyl algebras.
Section \ref{sec:mtwa} is devoted to the definition and structural results for multiparameter twisted Weyl algebras,
with a proof of Theorem B in Section \ref{sec:simplequotients}.
Examples and relations to existing algebras are given in Sections \ref{sec:mwa} and \ref{sec:jordan},
where Theorem C is proved.
Representations of multiparameter twisted Weyl algebras are studied in Sections \ref{sec:weight}
and \ref{sec:whittaker}.

\subsection*{Acknowledgements}
This work was carried out during the second author's postdoc at IME-USP,
funded by FAPESP, processo 2008/10688-1. 
The first  author is supported in part by the CNPq grant (301743/
2007-0) and by the Fapesp grant (2005/60337-2). 


\subsection*{Notation and conventions}
By ``ring'' (``algebra'') we mean unital associative ring (algebra).
All ring and algebra morphisms are required to be unital.
By ``ideal'' we mean two-sided ideal unless otherwise stated.
An element $x$ of a ring $R$ is said to be \emph{regular in $R$} if
for all nonzero $y\in R$ we have $xy\neq 0$ and $yx\neq 0$.
The set of invertible elements in a ring $R$ will be denoted by $R^\times$.

Let $R$ be a ring.
Recall that an $R$-ring is a ring $A$ together with a ring morphism $R\to A$.
Let $X$ be a set. Let $RXR$ be the free $R$-bimodule on $X$.
The free $R$-ring $F_R(X)$ on $X$ is defined as the tensor algebra of the free
$R$-bimodule on $X$: $F_R(X)=\oplus_{n\ge 0} (RXR)^{\otimes_R n}$
where $(RXR)^{\otimes_R 0}=R$ by convention and the ring morphism $R\to F_R(X)$
is the inclusion into the degree zero component.

\section{Twisted generalized Weyl algebras} \label{sec:tgwa}
Throughout this section we fix a commutative ring $\K$.
\subsection{Definition} \label{sec:tgwadef}
We recall the definition of twisted generalized Weyl algebras \cite{MT99,MPT}.
Here we emphasize the initial data more than usual, which will be
useful in the next section to express the functoriality of the construction.

\begin{dfn}[TGW datum]
Let $n$ a positive integer.
A \emph{twisted generalized Weyl datum (over $\K$ of degree $n$)}
 is a triple $(R,\si,t)$ where
\begin{itemize}
\item $R$ is a unital associative $\K$-algebra,
\item $\si$ is a group homomorphism $\si:\Z^n\to\Aut_\K(R)$, $g\mapsto \si_g$,
\item $t$ is a function $t:\{1,\ldots,n\}\to Z(R)$, $i\mapsto t_i$.
\end{itemize} 
A \emph{morphism} between TGW data over $\K$ of degree $n$, 
\[\varphi:(R,\si,t)\to (R',\si',t')\]
is a $\K$-algebra morphism $\varphi: R\to R'$ such that
$\varphi \si_i=\si_i'\varphi$ and $\varphi(t_i)=t_i'$ for all $i\in\{1,\ldots,n\}$.
We let $\TGW{n}{\K}$ denote the category whose objects are the TGW data
over $\K$ of degree $n$ and morphisms are as above.
\end{dfn}
For $i\in\{1,\ldots,n\}$ we put $\si_i=\si_{e_i}$, where $\{e_i\}_{i=1}^n$ is the standard
$\Z$-basis for $\Z^n$.
A \emph{parameter matrix (over $\K^\times$ of size $n$)} is an $n\times n$
matrix $\mu=(\mu_{ij})_{i\neq j}$ without diagonal where $\mu_{ij}\in\K^\times\,\forall i\neq j$. The set of all parameter matrices over $\K^\times$ of size $n$
will be denoted by $\PM{n}{\K}$.


\begin{dfn}[TGW construction]
Let $n\in\Z_{>0}$, $(R,\si,t)$ be an object in $\TGW{n}{\K}$, and $\mu\in\PM{n}{\K}$.
The \emph{twisted generalized Weyl construction with parameter matrix $\mu$
associated to the TGW datum $(R,\si,t)$} is denoted by $\TGWC{R}{\si}{t}{\mu}$
and is defined as the free $R$-ring on the set $\{x_i,y_i\mid i=1,\ldots,n\}$
modulo two-sided ideal generated by the following set of elements:
\begin{subequations}\label{eq:tgwarels}
\begin{alignat}{3}
\label{eq:tgwarels1}
x_ir  &-\si_i(r)x_i,  &\quad y_ir&-\si_i^{-1}(r)y_i, 
 &\quad \text{$\forall r\in R,\, i\in\{1,\ldots,n\}$,} \\
\label{eq:tgwarels2}
y_ix_i&-t_i, &\quad x_iy_i&-\si_i(t_i),
 &\quad \text{$\forall i\in\{1,\ldots,n\}$,} \\
\label{eq:tgwarels3}
&&\quad x_iy_j&-\mu_{ij}y_jx_i,
 &\quad \text{$\forall i,j\in\{1,\ldots,n\},\, i\neq j$.}
\end{alignat}
\end{subequations}
\end{dfn}
The images in $\TGWC{R}{\si}{t}{\mu}$ of the elements $x_i, y_i$  will
be denoted by $\hat X_i, \hat Y_i$ respectively.
The ring $\TGWC{R}{\si}{t}{\mu}$ has a $\Z^n$-gradation
given by requiring $\deg \hat X_i=e_i, \deg \hat Y_i=-e_i, \deg r=0\, \forall r\in R$.
Let $\TGWI{R}{\si}{t}{\mu}\subseteq \TGWC{R}{\si}{t}{\mu}$ be 
the sum of all graded ideals $J\subseteq \TGWC{R}{\si}{t}{\mu}$ having
zero intersection with the degree zero component, i.e. such that
$\TGWC{R}{\si}{t}{\mu}_0\cap J=\{0\}$.
It is easy to see that $\TGWI{R}{\si}{t}{\mu}$ is the unique maximal graded ideal having
zero intersection with the degree zero component.
\begin{dfn}[TGW algebra]
The \emph{twisted generalized Weyl algebra with parameter matrix $\mu$
associated to the TGW datum $(R,\si,t)$} is
denoted $\TGWA{R}{\si}{t}{\mu}$ and is defined as the
quotient $\TGWA{R}{\si}{t}{\mu}:=\TGWC{R}{\si}{t}{\mu} / \TGWI{R}{\si}{t}{\mu}$.
\end{dfn}
Since $\TGWI{R}{\si}{t}{\mu}$ is graded,
$\TGWA{R}{\si}{t}{\mu}$ inherits a $\Z^n$-gradation from $\TGWC{R}{\si}{t}{\mu}$.
The images in $\TGWA{R}{\si}{t}{\mu}$ of the elements $\hat X_i, \hat Y_i$ will be
denoted by $X_i, Y_i$.
By a \emph{monic monomial} in a TGW construction $\TGWC{R}{\si}{t}{\mu}$
(respectively TGW algebra $\TGWA{R}{\si}{t}{\mu}$)
we will mean a product of elements from
$\{\hat X_i, \hat Y_i\mid i=1,\ldots,n\}$
(respectively $\{X_i, Y_i\mid i=1,\ldots,n\}$).
 
The following statements are easy to check.
\begin{lem}\label{lem:easy}
\begin{enumerate}[{\rm (a)}]
\item \label{it:monomialgeneration}
$\TGWA{R}{\si}{t}{\mu}$ (respectively $\TGWC{R}{\si}{t}{\mu}$) is generated as a left and as a right $R$-module
by the monic monomials in $X_i, Y_i\,(i=1,\ldots,n)$ (respectively
$\hat X_i, \hat Y_i\, (i=1,\ldots,n)$).

\item
\label{it:A0}
The degree zero component of $\TGWA{R}{\si}{t}{\mu}$ is equal to
the image of $R$ under the natural map $\rho:R\to\TGWA{R}{\si}{t}{\mu}$.

\item\label{it:zerointersection}
Any nonzero graded ideal of $\TGWA{R}{\si}{t}{\mu}$ has nonzero intersection
with the degree zero component.
\end{enumerate}
\end{lem}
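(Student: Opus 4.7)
The plan is to prove the three parts in order, each by an elementary argument using the defining relations and the maximality of $\TGWI{R}{\si}{t}{\mu}$.

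For part (\ref{it:monomialgeneration}), I would start from the fact that the free $R$-ring $F_R(\{x_i,y_i\})$ is spanned as an abelian group by expressions of the form $r_0 z_1 r_1 z_2 \cdots z_k r_k$ with $z_j \in\{x_i,y_i\}$ and $r_j\in R$. The relations \eqref{eq:tgwarels1} allow one to push every internal $r_j$ past each generator, at the cost of applying some $\si_i^{\pm1}$, so every such expression is equivalent modulo the defining ideal to $r\cdot m$ (respectively $m\cdot r'$) for some $r,r'\in R$ and some monic monomial $m$. This gives the statement for $\TGWC{R}{\si}{t}{\mu}$, and it descends to $\TGWA{R}{\si}{t}{\mu}$ because the quotient map is surjective and $R$-linear on both sides.

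For part (\ref{it:A0}), by (\ref{it:monomialgeneration}) any element of $\TGWC{R}{\si}{t}{\mu}_0$ is an $R$-linear combination of monic monomials of total $\Z^n$-degree zero. I would then show by induction on length that every such monic monomial reduces to an element of the image of $R$ in $\TGWC{R}{\si}{t}{\mu}$: in a degree-zero monomial of positive length there must exist indices $j<j'$ with $\{z_j,z_{j'}\}=\{\hat X_i,\hat Y_i\}$ for some $i$; using \eqref{eq:tgwarels3} (and \eqref{eq:tgwarels1} to sweep past any intermediate $R$-factors) one brings them adjacent up to an invertible scalar, and then \eqref{eq:tgwarels2} replaces the pair $\hat Y_i\hat X_i$ or $\hat X_i\hat Y_i$ by $t_i$ or $\si_i(t_i)\in R$, shortening the length by $2$. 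Thus the image of $R$ exhausts $\TGWC{R}{\si}{t}{\mu}_0$. Since $\TGWI{R}{\si}{t}{\mu}\cap \TGWC{R}{\si}{t}{\mu}_0=0$ by construction, the induced map $\TGWC{R}{\si}{t}{\mu}_0\to\TGWA{R}{\si}{t}{\mu}_0$ is an isomorphism, which gives (\ref{it:A0}).

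Part (\ref{it:zerointersection}) is then a direct consequence of the maximality of $\TGWI{R}{\si}{t}{\mu}$. Given a graded ideal $J\subseteq \TGWA{R}{\si}{t}{\mu}$ with $J\cap\TGWA{R}{\si}{t}{\mu}_0=0$, its preimage $\tilde J\subseteq\TGWC{R}{\si}{t}{\mu}$ under the quotient map is a graded ideal containing $\TGWI{R}{\si}{t}{\mu}$. Because $\TGWC{R}{\si}{t}{\mu}_0\to\TGWA{R}{\si}{t}{\mu}_0$ is injective by (\ref{it:A0}), the hypothesis on $J$ forces $\tilde J\cap\TGWC{R}{\si}{t}{\mu}_0=0$, so $\tilde J\subseteq\TGWI{R}{\si}{t}{\mu}$ by the defining property of $\TGWI{R}{\si}{t}{\mu}$, and therefore $J=0$.

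The only mildly non-trivial step is the inductive reduction inside part (\ref{it:A0}); parts (\ref{it:monomialgeneration}) and (\ref{it:zerointersection}) are essentially bookkeeping about the relations and the maximality definition.
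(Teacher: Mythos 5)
The paper itself offers no proof of this lemma (it simply says ``The following statements are easy to check''), so there is nothing to compare against; the only question is whether your argument is correct. Parts (\ref{it:monomialgeneration}) and (\ref{it:zerointersection}) are fine as you've written them: pushing ring elements past generators via \eqref{eq:tgwarels1}, and the maximality characterization of $\TGWI{R}{\si}{t}{\mu}$, are exactly the right tools.

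Part (\ref{it:A0}) has a genuine gap in the inductive step. You choose some pair $z_j=\hat X_i$, $z_{j'}=\hat Y_i$ (or vice versa) and assert that \eqref{eq:tgwarels3} ``brings them adjacent.'' But \eqref{eq:tgwarels3} only exchanges a generator with an \emph{opposite-type} generator of a \emph{different} index: it gives no way to move $\hat X_i$ past another $\hat X_l$, nor $\hat Y_i$ past another $\hat Y_l$. So for an arbitrary choice of the pair $(j,j')$, the intermediate letters can block the move entirely. Concretely, in the degree-zero monomial $\hat X_1\hat X_2\hat Y_2\hat Y_1$, if you take the pair $(j,j')=(1,4)$ with $i=1$, the letter $\hat X_2$ blocks $\hat X_1$ from moving right and $\hat Y_2$ blocks $\hat Y_1$ from moving left, so \eqref{eq:tgwarels3} never makes them adjacent. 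The conclusion is still true, but the proof needs an organizing principle that your write-up omits. One correct version: pick the pair $(j,j')$ with $j'-j$ minimal among \emph{all} indices $i$; by minimality no index $l$ contributes both types between $j$ and $j'$, so the interior subword can first be sorted (swapping only opposite-type, different-index adjacent pairs via \eqref{eq:tgwarels3}, which strictly decreases the inversion count and hence terminates), after which $\hat X_i$ and $\hat Y_i$ can each slide through its permitted half of the sorted interior to meet. Alternatively one can run a two-phase scheme on the whole word (sort all $\hat Y$'s to the front, then move the rightmost $\hat X$ leftward to its partner). Either way, the single unqualified ``one brings them adjacent'' needs to be replaced by such an argument.
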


\begin{dfn}[$\mu$-Consistency]
Let $(R,\si,t)$ be a TGW datum over $\K$ of degree $n$ and
$\mu$ be a parameter matrix over $\K^\times$ of size $n$.
We say that $(R,\si,t)$ is \emph{$\mu$-consistent} if the
canonical map $\rho:R\to \TGWA{R}{\si}{t}{\mu}$ is injective.
\end{dfn}
Since $\TGWI{R}{\si}{t}{\mu}$ has zero intersection with the zero-component,
$(R,\si,t)$ is 
$\mu$-consistent iff the canonical map $R\to \TGWC{R}{\si}{t}{\mu}$ is 
injective.
Even in the cases when $\rho$ is not injective, we will often
view $\TGWA{R}{\si}{t}{\mu}$ as a left $R$-module and write for example
$rX_i$ instead of $\rho(r)X_i$.

\begin{dfn}[Regularity]
A TGW datum $(R,\si,t)$ is called \emph{regular} if $t_i$
is regular in $R$ for all $i$.
\end{dfn}

The following result was proved in \cite[Theorem 6.2]{FutHar2011}.
\begin{thm}\label{thm:muconsistency}
Let $\K$ be a commutative unital ring,
$R$ be an associative $\K$-algebra, $n$ a positive ingeter,
$t=(t_1,\ldots,t_n)$ be an $n$-tuple of regular central elements of $R$,
$\si:\Z^n\to\Aut_\K(R)$ a group homomorphism, $\mu_{ij}$ ($i,j=1,\ldots,n, i\neq j$) invertible
elements from $\K$, and
$\TGWA{R}{\si}{t}{\mu}$ the corresponding twisted generalized Weyl algebra,
equipped with the canonical homomorphism of $R$-rings
$\rho:R\to \TGWA{R}{\si}{t}{\mu}$.
Then the following two statements are equivalent:
\begin{enumerate}[{\rm (a)}]
\item $\rho$ is injective,
\item the following two sets of relations are satisfied in $R$:
\begin{alignat}{2}
\label{eq:muconsistency1}
\si_i\si_j(t_it_j) &= \mu_{ij}\mu_{ji}\si_i(t_i)\si_j(t_j),
  &\qquad \forall i,j=1,\ldots,n,\, i\neq j, \\
\label{eq:muconsistency2}
t_j\si_i\si_k(t_j) &= \si_i(t_j)\si_k(t_j),
  &\qquad \forall i,j,k=1,\ldots,n,\, i\neq j\neq k\neq i.
\end{alignat} 
 \end{enumerate}
In particular, if \eqref{eq:muconsistency1} and \eqref{eq:muconsistency2}
are satisfied, then $\TGWA{R}{\si}{t}{\mu}$ is nontrivial iff $R$ is nontrivial.
Moreover, neither of the two conditions \eqref{eq:muconsistency1} and
 \eqref{eq:muconsistency2} imply the other.
\end{thm}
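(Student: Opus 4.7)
The plan is to prove the two implications separately, and then address the nontriviality and independence claims.

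For (a)$\Rightarrow$(b), the idea is that both identities are equalities of elements living in $\rho(R)\subseteq \TGWA{R}{\si}{t}{\mu}$, so injectivity of $\rho$ automatically transports a two-way-reduction equality in $\TGWA{R}{\si}{t}{\mu}$ back to $R$. For \eqref{eq:muconsistency1} I would compute the degree-zero element $Y_iX_jY_jX_i\in \TGWA{R}{\si}{t}{\mu}$ in two ways: first as $Y_i(X_jY_j)X_i = \si_i^{-1}\si_j(t_j)\,t_i$, and secondly, via $Y_iX_j=\mu_{ji}^{-1}X_jY_i$ and $Y_jX_i=\mu_{ij}^{-1}X_iY_j$ together with $Y_iX_i=t_i$, as $\mu_{ij}^{-1}\mu_{ji}^{-1}\si_j(t_i)\si_j(t_j)$; equating and applying $\si_i$ produces \eqref{eq:muconsistency1}. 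For \eqref{eq:muconsistency2} the corresponding witness is a word in three distinct indices $i,j,k$ in which a factor of $t_j$ is extracted from a $Y_jX_j$ (or $X_jY_j$) pair in two different positions relative to the $X_i$ and $X_k$; equating the two reductions, and observing that the $\mu$-factors cancel because each $Y_j$ is commuted past exactly one $X_i$ and one $X_k$, yields precisely \eqref{eq:muconsistency2}.

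The substantive direction is (b)$\Rightarrow$(a): one has to rule out any hidden degree-zero relation beyond those already absorbed into $\TGWI{R}{\si}{t}{\mu}$. My approach is a Bergman Diamond Lemma argument applied to $\TGWC{R}{\si}{t}{\mu}$. Choose a monomial ordering that pushes $R$-coefficients to the left and orders $\hat X_i$ before $\hat Y_i$ within each grading component, orient the defining relations \eqref{eq:tgwarels1}--\eqref{eq:tgwarels3} as a rewriting system, and enumerate the overlap ambiguities. Regularity of the $t_i$ dispatches the ambiguities coming from \eqref{eq:tgwarels2}; the only essential remaining ones are a two-index overlap on $\hat Y_i\hat X_j\hat Y_j\hat X_i$ and a three-index overlap on a word in $\hat X_i,\hat X_k,\hat Y_j,\hat X_j$, which resolve exactly when \eqref{eq:muconsistency1} and \eqref{eq:muconsistency2} respectively hold. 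Confluence then delivers an $R$-basis of normal forms in $\TGWC{R}{\si}{t}{\mu}$, hence $R\hookrightarrow \TGWC{R}{\si}{t}{\mu}$; since $\TGWI{R}{\si}{t}{\mu}\cap R = 0$ by construction, this descends to $\rho\colon R\hookrightarrow \TGWA{R}{\si}{t}{\mu}$. A more concrete alternative is to realise $X_i,Y_i$ as operators on a suitably twisted skew-Laurent extension of $R$, where \eqref{eq:muconsistency1}--\eqref{eq:muconsistency2} appear as the precise cocycle identities that make the action close up.

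With the faithful realisation in hand, the nontriviality addendum is immediate: $R$ embeds in $\TGWA{R}{\si}{t}{\mu}$, so the latter is nonzero whenever $R$ is, and the converse is trivial. For the independence of \eqref{eq:muconsistency1} and \eqref{eq:muconsistency2}, I would exhibit two small TGW data: one with $n=2$ (where \eqref{eq:muconsistency2} is vacuous) and $(t_1,t_2,\si_1,\si_2,\mu_{ij})$ chosen so that \eqref{eq:muconsistency1} fails, and one with $n=3$ and all $\mu_{ij}=1$ (so that \eqref{eq:muconsistency1} is automatic) with the $t_j$'s arranged to violate \eqref{eq:muconsistency2}; direct substitution in each verifies the claim.

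The hardest step will be the confluence analysis in (b)$\Rightarrow$(a): pinpointing exactly which overlap ambiguities produce the two-index and the three-index conditions, and checking that no longer overlap creates a further, previously unseen constraint. Everything else reduces either to direct manipulation of the TGW defining relations or to inspection of a small counterexample.
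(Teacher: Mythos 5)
The paper does not prove this theorem at all: the sentence immediately preceding it reads ``The following result was proved in \cite[Theorem 6.2]{FutHar2011},'' and the proof is delegated entirely to that reference. So there is no in-paper argument to compare against; your proposal has to stand on its own. Judged that way, it is a reasonable roadmap rather than a proof, and the places where it leaves work undone are exactly the places where the theorem is hard.

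For (a)$\Rightarrow$(b): your two reductions of $Y_iX_jY_jX_i$ are correct and do give \eqref{eq:muconsistency1} after applying $\si_i$. But for \eqref{eq:muconsistency2} you only assert that ``the corresponding witness is a word in three distinct indices'' with the $\mu$-factors cancelling. You need to actually exhibit that word (e.g.\ a suitable degree-$(e_i+e_k)$ or degree-$0$ monomial with $Y_j,X_j$ passed through $X_i,X_k$ in two orders), check that both reductions land in $\rho(R)$, and verify the $\mu$'s cancel. As stated, this is a claim, not an argument.

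For (b)$\Rightarrow$(a): this is the substantive implication and your proposal only sketches it. The Diamond Lemma approach is plausible in spirit, but you never pin down the ordering, never list the ambiguities, and never check resolvability; you explicitly flag this as ``the hardest step,'' which is an admission that the proof is not done. There are also nontrivial technicalities you gloss over: $R$ is an arbitrary (not necessarily commutative) $\K$-algebra, so you are in the setting of a free $R$-ring rather than a free $\K$-algebra, and the relations $x_ir-\si_i(r)x_i$ are parametrized by all $r\in R$; applying a Bergman-type lemma here requires either fixing a $\K$-basis of $R$ or invoking a relative/$R$-ring version of the Diamond Lemma and being careful about inclusion ambiguities coming from $R$. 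None of this is addressed. (Your ``concrete alternative'' — a faithful operator realisation — is gestured at but not constructed either.)

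Finally, the independence sketch is not quite right. With $n=3$ and all $\mu_{ij}=1$, condition \eqref{eq:muconsistency1} reads $\si_i\si_j(t_it_j)=\si_i(t_i)\si_j(t_j)$, which is \emph{not} automatic; you would have to impose something like $\si_j(t_i)=t_i$ for $i\neq j$ to make it hold, and doing that naively also trivialises \eqref{eq:muconsistency2}, killing the counterexample. You would need to produce genuine TGW data for both directions and actually check them. In short, the proposal contains the right pieces for (a)$\Rightarrow$(b) but leaves the (b)$\Rightarrow$(a) direction and the independence examples unproven; consulting \cite[Theorem 6.2]{FutHar2011} — which the paper uses as the actual source — is the intended route.
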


\begin{lem} \label{lem:tgwatgwc}
If $t_i\in R^\times$ for all $i$, then
the canonical projection $\TGWC{R}{\si}{t}{\mu}\to\TGWA{R}{\si}{t}{\mu}$
is an isomorphism.
\end{lem}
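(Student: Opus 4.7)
The plan is to show that under the hypothesis $t_i\in R^\times$, the graded ideal $\TGWI{R}{\si}{t}{\mu}$ of $\TGWC{R}{\si}{t}{\mu}$ is zero, which gives the claim immediately. By Lemma \ref{lem:easy}\eqref{it:zerointersection} (or rather its $\TGWC$-analog built into the definition of $\TGWI{R}{\si}{t}{\mu}$), it suffices to prove that every nonzero graded ideal of $\TGWC{R}{\si}{t}{\mu}$ meets the degree zero component nontrivially.

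The key observation is that, by the defining relations \eqref{eq:tgwarels2}, one has $\hat Y_i \hat X_i = t_i$ and $\hat X_i \hat Y_i = \si_i(t_i)$ in $\TGWC{R}{\si}{t}{\mu}$. Since $t_i\in R^\times$ and $\si_i$ is an automorphism, both $t_i$ and $\si_i(t_i)$ are units in $R$, which embeds into $\TGWC{R}{\si}{t}{\mu}$ as the degree zero part. Hence $\hat X_i$ admits $t_i^{-1}\hat Y_i$ as a left inverse and $\hat Y_i \si_i(t_i)^{-1}$ as a right inverse, so $\hat X_i$ is two-sided invertible, and similarly for $\hat Y_i$. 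In particular, every monic monomial in the $\hat X_i, \hat Y_i$ is invertible in $\TGWC{R}{\si}{t}{\mu}$.

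Now let $J\subseteq\TGWC{R}{\si}{t}{\mu}$ be a nonzero graded ideal, and pick a nonzero homogeneous element $u\in J$ of some degree $g=(g_1,\ldots,g_n)\in\Z^n$. Form a monic monomial $m$ of degree $-g$, for example $m=\prod_{i:g_i>0}\hat Y_i^{g_i}\cdot\prod_{i:g_i<0}\hat X_i^{-g_i}$; this is possible because $\deg \hat X_i=e_i$ and $\deg \hat Y_i=-e_i$ span $\Z^n$. Then $mu\in J$ is homogeneous of degree $0$, and $mu\neq 0$ since $m$ is invertible and $u\neq 0$. Thus $J\cap\TGWC{R}{\si}{t}{\mu}_0\neq 0$, proving $\TGWI{R}{\si}{t}{\mu}=0$ and hence that the canonical projection $\TGWC{R}{\si}{t}{\mu}\to\TGWA{R}{\si}{t}{\mu}$ is an isomorphism.

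There is no real obstacle here; the only point that deserves care is verifying that the left and right inverses of $\hat X_i$ coincide (which follows from the standard argument that a two-sidedly invertible element has a unique inverse) and that the monomial $m$ is genuinely invertible as a product of invertibles — both straightforward once the invertibility of each $\hat X_i, \hat Y_i$ has been recorded.
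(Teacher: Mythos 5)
Your proof is correct and follows essentially the same route as the paper: invertibility of each $\hat X_i$ (and $\hat Y_i$), which forces every homogeneous component to contain a unit, so that any nonzero graded ideal must meet the degree-zero component, whence $\TGWI{R}{\si}{t}{\mu}=0$. The only difference is cosmetic: the paper phrases this as ``$\TGWC{R}{\si}{t}{\mu}$ is a crossed product / strongly graded ring, hence graded ideals meet degree zero,'' citing the general fact, whereas you unwind that general fact by explicitly multiplying a nonzero homogeneous $u\in J$ by an invertible monomial of the opposite degree.
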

\begin{proof}
The algebra $\TGWC{R}{\si}{t}{\mu}$
is a $\Z^n$-crossed product algebra over its degree zero subalgebra,
since each homogenous component 
contains an invertible element. Indeed
since $t_i\in R^\times$, each $X_i$ is invertible and thus
$X_1^{g_1}\cdots X_n^{g_n}$ has
degree $g$ and is invertible. Therefore any nonzero graded ideal in
$\TGWC{R}{\si}{t}{\mu}$ has nonzero intersection with the degree zero component, a
property which holds for any strongly graded ring, in particular
for crossed product algebras. Thus $\TGWI{R}{\si}{t}{\mu}=0$, which
proves the claim.
\end{proof}

\subsection{TGW algebras which are domains}\label{sec:tgwadomains}
The following condition for a TGW algebra to be a domain will be used.
\begin{prp}\label{prp:domain}
Let $A=\TGWA{R}{\si}{t}{\mu}$ be a twisted generalized Weyl algebra
where $(R,\si,t)$ is $\mu$-consistent.
Then $A$ is a domain if and only if $R$ is a domain.
\end{prp}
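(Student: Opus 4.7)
The forward direction is immediate from $\mu$-consistency: the canonical map $\rho: R \to A$ is injective, so $R$ is a subring of the domain $A$.

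For the converse, assume $R$ is a domain. The strategy has two layers. First, use the $\Z^n$-grading on $A$: equip $\Z^n$ with any total order compatible with addition (e.g.\ lexicographic). For nonzero $a=\sum_g a_g$, $b=\sum_h b_h$ with leading homogeneous components $a_{g_0}$ and $b_{h_0}$, the $(g_0+h_0)$-component of $ab$ equals $a_{g_0}b_{h_0}$. Hence it suffices to prove that $A$ has no nonzero homogeneous zero-divisors.

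Second, I would reduce the homogeneous statement to a crossed-product situation by localizing at the $t_i$'s. Let $S\subseteq R$ be the multiplicative set generated by all $\si_g(t_i)$ for $g\in\Z^n$ and $i=1,\ldots,n$. These are central (since automorphisms preserve the center) and, as $R$ is a domain, regular, so $S$ is a central Ore set and the localization $R':=R[S^{-1}]$ is again a domain. The localized TGW datum $(R',\si',t)$ inherits $\mu$-consistency from \eqref{eq:muconsistency1}--\eqref{eq:muconsistency2}, and now each $t_i\in (R')^\times$. By Lemma \ref{lem:tgwatgwc}, $\TGWA{R'}{\si'}{t}{\mu}=\TGWC{R'}{\si'}{t}{\mu}$, which is a $\Z^n$-crossed product over $R'$; as $\Z^n$ is torsion-free abelian and $R'$ is a domain, this crossed product is a domain.

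The crux — and the main obstacle — is to show that the natural map $A\to\TGWA{R'}{\si'}{t}{\mu}$ is injective, so that $A$ embeds into a domain. For this I would use Lemma \ref{lem:easy}\eqref{it:monomialgeneration}: each $A_g$ is generated as a left $R$-module by a monic monomial $Z_g$, and a repeated application of the relations \eqref{eq:tgwarels} collapses $Z_{-g}Z_g$ to an element of $R$ that is (up to a scalar in $\K^\times$) a product of $\si_f(t_i)$'s, i.e.\ an element of $S$. From this one shows that each element of $S$ acts as a regular operator on every homogeneous component of $A$, hence on $A$ itself; therefore $A$ is $S$-torsion-free and injects into its Ore localization, which coincides with $\TGWA{R'}{\si'}{t}{\mu}$. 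This completes the reduction and gives the result.
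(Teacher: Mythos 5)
Your first reduction (pass to leading homogeneous components using an order on $\Z^n$) matches the paper exactly. The second layer — embedding $A$ in the localized algebra $\TGWA{S^{-1}R}{\si}{t}{\mu}$, recognizing the latter as a $\Z^n$-crossed product over a domain via Lemma~\ref{lem:tgwatgwc}, and concluding — is a genuinely different route from the paper, which at this point simply invokes \cite[Prop.~3.1]{Ha09} for the homogeneous case. Your strategy is a reasonable one, but as written the crucial injectivity step contains a concrete error and a gap.

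The error: you cite Lemma~\ref{lem:easy}(\ref{it:monomialgeneration}) as saying ``each $A_g$ is generated as a left $R$-module by a monic monomial $Z_g$''. The lemma only says $A$ is generated by the \emph{set} of all monic monomials; it does not say each $A_g$ is cyclic, and in general it is not. In the type-$A_2$ example of Section~\ref{sec:finitistic} ($R=\K[H]$, $\si_1(H)=H+1$, $\si_2(H)=H-1$, $t_1=H$, $t_2=H+1$), multiplying a hypothetical relation $X_1X_2=rX_2X_1$ on the left by $Y_1$ gives $HX_2=\si_1^{-1}(r)(H-1)X_2$, which would force $\si_1^{-1}(r)=H/(H-1)\notin\K[H]$; so $X_1X_2$ and $X_2X_1$ are not $R$-proportional and $A_{e_1+e_2}$ is not cyclic. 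Your argument that $S$ acts regularly on each $A_g$ (``$sa=0$, write $a=rZ_g$, hit with $Z_{-g}$'') therefore breaks down. It can be repaired by using Lemma~\ref{lem:easy}(\ref{it:zerointersection}) instead: if $a\in A_g$ is nonzero, the graded ideal it generates meets $R$ nontrivially, so some homogeneous $b,c$ give $0\neq bac\in R$; then $0=b(sa)c=\si_{\deg b}(s)\,(bac)$ in the domain $R$, a contradiction — but that is a different argument than the one you give. Separately, the identification of $S^{-1}A$ with $\TGWA{S^{-1}R}{\si}{t}{\mu}$ is left as an assertion; it is true but nontrivial, and in this paper such facts are quoted from \cite[Thm.~5.2]{FutHar2011} rather than taken for granted. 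So the route is viable, but the proposal as stated does not close the argument.
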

\begin{proof}
Clearly $R$ must be a domain if $A$ is a domain.
For the converse, assume $R$ is a domain and suppose
$a,b\in A$ are nonzero but $ab=0$. Let $a_g$ and $b_h$ be the
leading terms in $a$ and $b$ respectively, with respect to some order
(we use here that the group $\Z^n$ is orderable).
Then $a_gb_h=0$. As in the proof of \cite[Proposition 3.1]{Ha09} this
forces $a_g=0$ or $b_h=0$ which is a contradiction.
\end{proof}

\section{Finitistic TGW algebras} \label{sec:finitistic}
Throughout the rest of the paper we assume that $\K$ is a field.


In \cite{Ha09} the following notion (there called ''locally finite'' TGW algebra) was defined.
\begin{dfn}\label{dfn:locfin}
A TGW algebra $A=\TGWA{R}{\si}{t}{\mu}$ is called \emph{$\K$-finitistic} if
$\dim_\K V_{ij}<\infty$ for all $i,j\in\{1,\ldots,n\}$, where
\begin{equation}\label{eq:Vijdef}
V_{ij} = \mathrm{Span}_{\K}
 \{\si_i^k(t_j)\mid  k\in\Z\}.
\end{equation}
\end{dfn}
For each $i,j$, we denote by $p_{ij}\in\K[x]$ be the minimal
polynomial for $\si_i$ acting on the finite-dimensional
space $V_{ij}$.
The following result was proved in \cite{Ha09} (for the case
$\mu_{ij}=\mu_{ji}$ and $R$ a commutative domain, but these restrictions are unnecessary).
\begin{thm}\label{thm:lfthm}
Let $A=\TGWA{R}{\si}{t}{\mu}$ be a $\K$-finitistic TGW algebra
where $(R,\si,t)$ is $\mu$-consistent.
Let $p_{ij}$ be the minimal polynomials defined above.
\begin{enumerate}[{\rm (a)}]
\item Define the matrix $C_A=(a_{ij})$ with integer entries as follows:
\begin{equation}\label{eq:CAdef}
a_{ij}=
\begin{cases}
2,&i=j,\\
1-\deg p_{ij},&i\neq j.
\end{cases}
\end{equation}
Then $C_A$ is a generalized Cartan matrix.
\item Assume $t_i$ is regular in $R$ for all $i$. Writing
\[p_{ij}(x)=x^{m_{ij}}+\la_{ij}^{(1)}x^{m_{ij}-1}+\cdots+\la_{ij}^{(m_{ij})},\]
where all $\la_{ij}^{(k)}\in\K$, the following identities hold in $A$, for any $i\neq j$:
\begin{equation}\label{eq:genserre3}
 X_i^{m_{ij}}X_j+\la_{ij}^{(1)}\mu_{ij}^{-1} X_i^{m_{ij}-1}X_jX_i+
 \cdots + \la_{ij}^{(m_{ij})} \mu_{ij}^{-m_{ij}}X_jX_i^{m_{ij}}=0
\end{equation}
and
\begin{equation}\label{eq:genserre3Y}
 Y_jY_i^{m_{ij}}+\la_{ij}^{(1)}\mu_{ji}^{-1} Y_iY_jY_i^{m_{ij}-1}+
 \cdots + \la_{ij}^{(m_{ij})} \mu_{ji}^{-m_{ij}}Y_i^{m_{ij}}Y_j=0.
\end{equation}
Moreover, for any $i\neq j$ and $m<m_{ij}$, the sets
$\{X_i^{m-k}X_jX_i^k\}_{k=0}^m$ and
$\{Y_i^{m-k}Y_jY_i^k\}_{k=0}^m$ are linearly independent in $A$ over $\K$.
\end{enumerate}
\end{thm}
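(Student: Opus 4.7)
For (a), the diagonal entries $a_{ii}=2$ hold by definition, and $a_{ij}\in\Z_{\le 0}$ for $i\neq j$ because $t_j\in V_{ij}$ is nonzero (by $\mu$-consistency of $(R,\si,t)$), forcing $\deg p_{ij}\ge 1$. The substantive point is the compatibility $a_{ij}=0\iff a_{ji}=0$. If $a_{ij}=0$ then $\si_i(t_j)=\la t_j$ for some $\la\in\K^\times$; substituting into the consistency relation \eqref{eq:muconsistency1}, using $\si_i\si_j(t_j)=\si_j\si_i(t_j)=\la\si_j(t_j)$, and cancelling the regular element $\si_j(t_j)$, one applies $\si_i^{-1}$ to obtain $\si_j(t_i)=\la^{-1}\mu_{ij}\mu_{ji}\,t_i$. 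Hence $\deg p_{ji}=1$, so $a_{ji}=0$, and the converse follows by symmetry.

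For the Serre-type relation \eqref{eq:genserre3} in (b), fix $i\neq j$, set $m=m_{ij}$, and let $u=\sum_{k=0}^{m}c_k X_i^{m-k}X_jX_i^k$ with $c_0=1$ and $c_k=\la_{ij}^{(k)}\mu_{ij}^{-k}$ for $k\ge 1$. The first step is to compute $Y_j u$ by pushing $Y_j$ through each summand using the defining relations $Y_jX_i=\mu_{ij}^{-1}X_iY_j$, $Y_jX_j=t_j$, and $X_ir=\si_i(r)X_i$. A direct calculation yields $Y_j u=\mu_{ij}^{-m}\,p_{ij}(\si_i)(t_j)\,X_i^m$, which vanishes by definition of $p_{ij}$ as the minimal polynomial of $\si_i$ on $V_{ij}$. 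A symmetric calculation shows $uY_j=0$. The main step, and principal obstacle, is to upgrade these one-sided annihilations to $u=0$ in $\TGWA{R}{\si}{t}{\mu}$. The plan is to consider the graded two-sided ideal $J\subseteq\TGWC{R}{\si}{t}{\mu}$ generated by $u$ and show $J\cap R=0$; by maximality of $\TGWI{R}{\si}{t}{\mu}$, this will place $u$ in the kernel of the canonical projection $\TGWC\to\TGWA$. Concretely, any homogeneous product $aub$ of total degree zero should be rewritten by moving $Y_j$-factors in $a$ or $b$ toward $u$ and invoking $Y_j u=0$ or $uY_j=0$; the surviving expressions lie in $R$ and vanish by regularity of the $t_i$. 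The dual relation \eqref{eq:genserre3Y} follows by the mirror argument interchanging $X$ and $Y$.

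For linear independence when $m<m_{ij}$, suppose $\sum_{k=0}^{m}c_k X_i^{m-k}X_jX_i^k=0$ in $A$ with $c_k\in\K$. Applying $Y_j$ on the left and using the push-through calculation above produces $\bigl(\sum_{k=0}^{m}c_k\mu_{ij}^{-(m-k)}\si_i^{m-k}(t_j)\bigr)X_i^m=0$. Since $X_i^mY_i^m=\prod_{\ell=1}^{m}\si_i^\ell(t_i)$ is a product of regular central elements and hence regular in $R$, the map $r\mapsto rX_i^m$ is injective on $R$, so the coefficient vanishes in $R$. But the $m+1$ elements $t_j,\si_i(t_j),\dots,\si_i^m(t_j)\in V_{ij}$ are linearly independent over $\K$ whenever $m+1\le m_{ij}=\dim_\K V_{ij}$, by minimality of $p_{ij}$; therefore every $c_k=0$. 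The linear independence of $\{Y_i^{m-k}Y_jY_i^k\}_{k=0}^{m}$ is established identically, by right-multiplying a purported dependence by $X_j$ and using $\si_i^{-1}$ in place of $\si_i$.
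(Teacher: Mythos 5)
The paper does not include a proof of this theorem; it cites \cite{Ha09}, noting that the original proof there is for symmetric $\mu$ and $R$ a commutative domain but asserting the restrictions are unnecessary. So the comparison here is against correctness rather than against a written-out argument.

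Your computation of $Y_j u=0$ (and $uY_j=0$) is correct and is indeed the heart of the matter; the linear-independence argument via $r\mapsto rX_i^m$ injective on $R$ (using regularity of $X_i^mY_i^m=\prod_{\ell=1}^m\si_i^\ell(t_i)$) is also sound. The gap is in the upgrade from $Y_ju=0$, $uY_j=0$ to $u=0$. Your plan is to show the graded ideal $\langle u\rangle\subseteq\TGWC{R}{\si}{t}{\mu}$ meets $R$ trivially by ``moving $Y_j$-factors toward $u$,'' but $Y_j$ does not commute (even up to scalar) with $Y_i$ for $i\neq j$ --- there is no defining relation between $Y_iY_j$ and $Y_jY_i$ --- so a $Y_j$ in $a$ cannot in general be brought adjacent to $u$. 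Concretely, take $n=2$, $i=1$, $j=2$, $m=m_{12}=1$, $u=X_1X_2+cX_2X_1$ with $c=\la_{12}^{(1)}\mu_{12}^{-1}$. The degree-zero product $Y_2Y_1 u$ has $Y_1$, not $Y_2$, adjacent to $u$, and $Y_1u=(t_1+c\,\mu_{21}^{-1}\si_2(t_1))X_2\neq 0$ in general; the vanishing of $Y_2Y_1u=(\si_2^{-1}(t_1)+c\mu_{21}^{-1}t_1)\,t_2$ requires the consistency relation \eqref{eq:muconsistency1} and the identity $\si_1(t_2)=-\la_{12}^{(1)}t_2$, not just $Y_2u=0$ or $uY_2=0$. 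So the sketched reduction does not suffice, and the cases where all $Y_j$-factors are shielded by other $Y_i$'s need a genuinely separate calculation.

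There is a much shorter route once you have $Y_ju=0$, which is available to this paper (though perhaps not to \cite{Ha09}): since $t_j$ is regular in $R$ and $(R,\si,t)$ is $\mu$-consistent and regular, $t_j=Y_jX_j$ and $\si_j(t_j)=X_jY_j$ are regular in $A$ (cf.\ the appeal to \cite[Thm.~5.2(a)]{FutHar2011} made in the proof of Theorem~\ref{thm:family_of_simples}), which forces $Y_j$ to be regular in $A$ on both sides. Then $Y_ju=0$ gives $u=0$ directly, with no graded-ideal bookkeeping. As a smaller point, in part (a) you cancel the regular element $\si_j(t_j)$, but part~(a) does not hypothesize regularity of the $t_i$; this is harmless in the commutative-domain setting of \cite{Ha09} (regularity is automatic for $t_j\neq 0$), but it is a silent assumption worth flagging in the general statement.
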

This gives an interpretation of the minimal polynomials $p_{ij}$ for
$i\neq j$ in terms of identities in the algebra $A$.
If $C_A$ is of type $Z$ (for example $A_n, B_n, C_n, D_n, E_6,E_7,E_8$ etc)
we say that $A$ is of Lie type $Z$.

Here we note that the polynomials $p_{ii}$ also give rise to
identities in $A$.
\begin{thm}\label{eq:lfthm2}
Let $A=\TGWA{R}{\si}{t}{\mu}$ be a $\K$-finitistic TGW algebra,
where $(R,\si,t)$ is regular and $\mu$-consistent.
Let $p_{ii}\in\K[x]$ be the minimal
polynomial for $\si_i$ acting on the finite-dimensional
spaces $V_{ii}$ defined in \eqref{eq:Vijdef}.
Writing
\[p_{ii}(x)=x^{m_{ii}}+\la_{ii}^{(1)}x^{m_{ii}-1}+\cdots+\la_{ii}^{(m_{ii})},\]
where all $\la_{ii}^{(k)}\in\K$, the following identities hold in $A$, for any $i$:
\begin{equation}\label{eq:genserre4}
 X_i^{m_{ii}}Y_i+\la_{ii}^{(1)} X_i^{m_{ii}-1}Y_iX_i+
 \cdots + \la_{ii}^{(m_{ii})} Y_iX_i^{m_{ii}}=0
\end{equation}
and
\begin{equation}\label{eq:genserre4Y}
 X_iY_i^{m_{ii}}+\la_{ii}^{(1)} Y_iX_iY_i^{m_{ii}-1}+
 \cdots + \la_{ii}^{(m_{ii})} Y_i^{m_{ii}}X_i=0.
\end{equation}
Moreover, for any $i$ and $m<m_{ii}$, the sets
$\{X_i^{m-k}Y_iX_i^k\}_{k=0}^m$ and
$\{Y_i^{m-k}X_iY_i^k\}_{k=0}^m$ are linearly independent in $A$ over $\K$.
\end{thm}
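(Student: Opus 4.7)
The plan is to mimic the strategy of Theorem~\ref{thm:lfthm}(b): reduce each monomial $X_i^{m-k}Y_iX_i^k$ (resp.\ $Y_i^{m-k}X_iY_i^k$) to a closed-form expression of the shape ``scalar from $V_{ii}$ times $X_i^{m-1}$ (resp.\ $Y_i^{m-1}$)'', then recognize the coefficient that appears in the putative identity as $p_{ii}(\si_i)(t_i)$, which vanishes by the defining property of the minimal polynomial. The linear independence assertion then reduces to linear independence inside $V_{ii}$ together with left/right regularity of $X_i,Y_i$ in $A$.

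Using $Y_iX_i=t_i$, $X_iY_i=\si_i(t_i)$, and the commutation $X_ir=\si_i(r)X_i$ (resp.\ $Y_ir=\si_i^{-1}(r)Y_i$), a direct computation, treating the boundary case $k=0$ (which collapses via $X_iY_i=\si_i(t_i)$, resp.\ $Y_iX_i=t_i$ on the other side) uniformly with $k\ge 1$ (which collapses via $Y_iX_i=t_i$, resp.\ $X_iY_i=\si_i(t_i)$), yields for every $m\ge 1$ and $0\le k\le m$
\[
X_i^{m-k}Y_iX_i^k=\si_i^{m-k}(t_i)\,X_i^{m-1},\qquad
Y_i^{m-k}X_iY_i^k=\si_i^{k-m+1}(t_i)\,Y_i^{m-1}.
\]
Substituting the first formula into \eqref{eq:genserre4} with $m=m_{ii}$ and $\la_{ii}^{(0)}:=1$ collapses the LHS to $\bigl(\sum_{k=0}^{m_{ii}}\la_{ii}^{(k)}\si_i^{m_{ii}-k}(t_i)\bigr)X_i^{m_{ii}-1}=p_{ii}(\si_i)(t_i)\cdot X_i^{m_{ii}-1}=0$. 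For \eqref{eq:genserre4Y}, applying the automorphism $\si_i^{-m_{ii}}$ to $p_{ii}(\si_i)(t_i)=0$ gives $\sum_k\la_{ii}^{(k)}\si_i^{-k}(t_i)=0$ in $R$, so substituting the second formula above exhibits the LHS of \eqref{eq:genserre4Y} as $\si_i\bigl(\sum_k\la_{ii}^{(k)}\si_i^{-k}(t_i)\bigr)Y_i^{m_{ii}-1}=0$.

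For linear independence with $m<m_{ii}$, suppose $\sum_{k=0}^m c_kX_i^{m-k}Y_iX_i^k=0$. By the first formula this becomes $\bigl(\sum_{k=0}^m c_k\si_i^{m-k}(t_i)\bigr)X_i^{m-1}=0$ in $A$. Regularity of $t_i$ in $R$ makes $X_i$ (and hence $X_i^{m-1}$) left- and right-regular in $A$: indeed, if $X_iz=0$ then $t_iz=Y_iX_iz=0$, and an analogous argument with $X_iY_i=\si_i(t_i)$ handles the right side; passing from an $R$-identity in $A$ back to $R$ is legitimized by $\mu$-consistency. Hence $\sum_k c_k\si_i^{m-k}(t_i)=0$ in $R$, and since $\{\si_i^{m-k}(t_i):0\le k\le m\}$ is a window of $m+1\le m_{ii}$ consecutive $\si_i$-powers of $t_i$, applying $\si_i^{-m}$ places it inside the linearly independent set $\{t_i,\si_i(t_i),\ldots,\si_i^{m_{ii}-1}(t_i)\}\subseteq V_{ii}$, forcing $c_k=0$. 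The $Y$-sided statement is symmetric. The principal obstacle is not conceptual but technical: keeping the $k=0$ boundary bookkeeping uniform in the monomial reduction, and justifying the descent from $A$ to $R$ via $\mu$-consistency plus regularity of $X_i,Y_i$—the same inputs implicit in the linear independence part of Theorem~\ref{thm:lfthm}(b).
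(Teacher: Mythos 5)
Your key reduction $X_i^{m-k}Y_iX_i^k=\si_i^{m-k}(t_i)\,X_i^{m-1}$ (and its $Y$-side analogue) is correct for all $0\le k\le m$, and with it the identities \eqref{eq:genserre4}, \eqref{eq:genserre4Y} collapse to $p_{ii}(\si_i)(t_i)=0$ exactly as you describe; the paper itself only says the proof is ``similar to Theorem~\ref{thm:lfthm},'' and your computation is the natural way to carry that out, so your route matches the intended one. One small imprecision worth flagging: you assert that regularity of $t_i$ in $R$ makes $X_i$ left- and right-regular in all of $A$, but your argument (``$X_iz=0\Rightarrow t_iz=0$'') only closes the loop when $z\in R$, and regularity of $t_i$ in $A$ is not an immediate consequence of regularity in $R$. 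For the linear-independence step this does not matter, since the coefficient $\sum_k c_k\si_i^{m-k}(t_i)$ already lies in $R$: just right-multiply by $Y_i^{m-1}$ to land on $\bigl(\sum_k c_k\si_i^{m-k}(t_i)\bigr)\si_i^{m-1}(t_i)\cdots\si_i(t_i)=0$ in $A_0=R$ (identified with $R$ by $\mu$-consistency), and regularity of the $\si_i^j(t_i)$ in $R$ then forces the coefficient to vanish. With that clarification the proof is complete.
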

\begin{proof}
The proof is similar to the proof of Theorem \ref{thm:lfthm}.
\end{proof}

\begin{example}
Let $R=\K[t_1,\ldots,t_n]$ be the polynomial algebra,
$\si_i(t_j)=t_j-\delta_{ij}$,
$\mu_{ij}\in \K\backslash\{0\}$ such that $\mu_{ij}\mu_{ji}=1$.
Let $A=\TGWA{R}{\si}{t}{\mu}$ be the associated TGW algebra.
It is easy to see that it is $\K$-finitistic. For $i=j$, the minimal polynomials are
$p_{ii}(x)=(x-1)^2$. For $i\neq j$ we have
$p_{ij}(x)=x-1$. The matrix $C_A$, defined in \eqref{eq:CAdef}, 
is the Cartan matrix of type $(A_1)^n=A_1\times\cdots\times A_1$
(just a diagonal matrix with $2$ on the diagonal). Thus 
$A$ is of Lie type $(A_1)^n$.
By \eqref{eq:genserre3} we have $X_iX_j = \mu_{ij}^{-1}X_jX_i$
for $i\neq j$. If all $\mu_{ij}=1$ then $A$ is isomorphic to the
$n$:th Weyl algebra.
\end{example}

\begin{example}
The following TGW algebra was first mentioned as an example 
in \cite{MT99}, but a complete presentation by generators
and relations was only given in \cite{Ha09}.
Let 
 $n=2$, $R=\K[H]$, $\si_1(H)=H+1$, $\si_2(H)=H-1$,
 $t_1=H$, $t_2=H+1$, $\mu_{12}=\mu_{21}=1$ and
let $A=\TGWA{R}{\si}{t}{\mu}$ be the associated TGW algebra.
Clearly $A$ is locally finite with $V_{ij}=\C H\oplus \C 1$ for $i,j=1,2$.
Observing that $\si_2(t_1)$ and $t_1$ are linearly independent and that
\[\si_2^2(t_1)-2\si_2(t_1)+t_1=H-2 - 2(H-1) + H = 0\]
we see that the minimal polynomial $p_{21}$ for $\si_2$ acting on $V_{21}$
is given by $p_{21}(x)=x^2-2x+1=(x-1)^2$. Similarly one checks that in fact
$p_{ij}(x)=(x-1)^2$ for all $i,j=1,2$.
Thus $C_A=\left[\begin{smallmatrix}2&-1\\-1&2\end{smallmatrix}\right]$, the
Cartan matrix of type $A_2$, so $A$ is of Lie type $A_2$.
By Theorem \ref{thm:lfthm}b),
we have for example $X_1^2X_2-2X_1X_2X_2+X_2X_1^2=0$
in $A$, which is precisely one of the Serre relations in the enveloping algebra
of $\mathfrak{sl}_3(\K)$, the simple Lie algebra of type $A_2$.
It was shown in \cite[Example 6.3]{Ha09} that in fact $A$ is isomorphic to the $\K$-algebra
with generators $X_1,X_2,Y_1,Y_2,H$ and defining relations
\begin{gather*}
\begin{alignedat}{2}
X_1H&=(H+1)X_1, &\qquad  X_2H&=(H-1)X_2, \\
Y_1H&=(H-1)Y_1, &\qquad Y_2H&=(H+1)Y_2, \\
Y_1X_1&=X_2Y_2=H, &\qquad   Y_2X_2&=X_1Y_1=H+1,
\end{alignedat}\qquad
\begin{aligned}
X_1^2X_2-2X_1X_2X_1+X_2X_1^2&=0,\\
X_2^2X_1-2X_2X_1X_2+X_1X_2^2&=0,\\
Y_1^2Y_2-2Y_1Y_2Y_1+Y_2Y_1^2&=0,\\
Y_2^2Y_1-2Y_2Y_1Y_2+Y_1Y_2^2&=0.
\end{aligned}
\end{gather*}
In \cite{Ha09}, this TGW algebra was also generalized to arbitrary symmetric
generalized Cartan matrices, although explicit presentation was only given in
type $A_2$.
\end{example}

\section{TGW algebras of Lie type $(A_1)^n$} \label{sec:A1n}

\subsection{Presentation by generators and relations} \label{sec:A1n_pres}
Let $A=\TGWA{R}{\si}{t}{\mu}$ be a $\K$-finitistic TGW algebra of Lie type
 $(A_1)^n=A_1\times \cdots\times A_1$, with $(R,\si,t)$ being $\mu$-consistent.
Thus $C_A$ has all zeros outside the main diagonal.
That is, $\deg p_{ij} = 1$ for all $i\neq j$. Equivalently (since $p_{ij}$
are monic by definition), for $i\neq j$ we have
 $p_{ij}(x)=x-\ga_{ij}$ for some $\ga_{ij}\in\K\backslash\{0\}$.
By Theorem \ref{thm:lfthm},
this means that in $A$ we have
\begin{subequations}\label{eq:kfinrels}
\begin{align}
X_iX_j &= \ga_{ij}\mu_{ij}^{-1}X_jX_i\qquad\forall i\neq j,\\
Y_jY_i &= \ga_{ij}\mu_{ji}^{-1}Y_iY_j\qquad\forall i\neq j.
\end{align}
\end{subequations}
It also means that 
\begin{equation}\label{eq:A1nsigmagamma}
\si_i(t_j)=\ga_{ij}t_j \quad\text{for all $i\neq j$.}
\end{equation}
By Theorem \ref{thm:muconsistency}, $(R,\si,t)$ is $\mu$-consistent if and only if
\begin{equation}\label{eq:A1nconsistency}
\mu_{ij}\mu_{ji}=\ga_{ij}\ga_{ji}\quad\forall i\neq j.
\end{equation}
We can now prove that \eqref{eq:kfinrels} generate all relations in
the ideal $\TGWI{R}{\si}{t}{\mu}$. 
\begin{thm}\label{thm:A1n_pres}
Let $A=\TGWA{R}{\si}{t}{\mu}$ is a $\K$-finitistic TGW algebra of type $(A_1)^n$,
where $(R,\si,t)$ is regular and $\mu$-consistent (i.e. \eqref{eq:A1nconsistency} holds).
Then $A$ is isomorphic
to the $R$-ring generated by $X_1,\ldots,X_n,Y_1,\ldots,Y_n$ modulo the relations
\begin{gather}
\begin{alignedat}{3}
X_ir &=\si_i(r)X_i, &\quad Y_ir &=\si_i^{-1}(r)Y_i
 &&\quad \forall r\in R,\forall i, \\
Y_iX_i &=t_i, &\quad X_iY_i&=\si_i(t_i),
 &&\quad \forall i, 
\end{alignedat}\\
\begin{aligned} \label{eq:pres_rels2}
X_iY_j = \mu_{ij}Y_jX_i, \quad
X_iX_j = \ga_{ij}\mu_{ij}^{-1}X_jX_i, \quad
Y_jY_i = \ga_{ij}\mu_{ji}^{-1}Y_iY_j, \quad\forall i\neq j.
\end{aligned}
\end{gather}
\end{thm}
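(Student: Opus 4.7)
Let $B$ denote the $R$-ring defined by the given presentation. First I will verify that all the listed relations hold in $A$: the biaction and Weyl-pair relations are part of the defining relations \eqref{eq:tgwarels} of any TGW algebra, and the commutation relations in \eqref{eq:pres_rels2} come from \eqref{eq:tgwarels3} (for $X_iY_j$) together with Theorem~\ref{thm:lfthm}(b) applied in the case $m_{ij}=1$, $p_{ij}(x)=x-\ga_{ij}$ (which yields the relations for $X_iX_j$ and $Y_jY_i$). The universal property of $B$ then yields a surjective $\Z^n$-graded $R$-ring morphism $\phi:B\twoheadrightarrow A$, so the task reduces to proving that $\phi$ is injective.

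Next I would establish a normal form in $B$: every monic monomial in the generators rewrites as $r\cdot Y^{\un b}X^{\un a}$ with $r\in R$ and $\un a,\un b\in\Z_{\ge 0}^n$ satisfying $a_ib_i=0$ for every $i$. Indeed, $X_iY_j=\mu_{ij}Y_jX_i$ ($i\neq j$) moves all $Y$'s to the left of all $X$'s, the pure-$X$ and pure-$Y$ relations in \eqref{eq:pres_rels2} sort the $X$'s and the $Y$'s among themselves up to nonzero scalars, any surviving adjacent pair $X_iY_i$ or $Y_iX_i$ collapses to $\si_i(t_i)$ or $t_i$ in $R$, and elements of $R$ are then pushed past the remaining letters by $X_kr=\si_k(r)X_k$ and $Y_kr=\si_k^{-1}(r)Y_k$; iterating terminates in the claimed form. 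Writing $\un g=\un g_+-\un g_-$ with $\un g_\pm\in\Z_{\ge 0}^n$ of disjoint support, this shows that the homogeneous component $B_{\un g}$ is a cyclic left $R$-module generated by $M_{\un g}:=Y^{\un g_-}X^{\un g_+}$.

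To finish I would show that in $A$ the corresponding cyclic module $R\cdot M_{\un g}$ is free of rank one; combined with surjectivity of $\phi$, this implies that $\phi|_{B_{\un g}}$ is an isomorphism for every $\un g\in\Z^n$, and hence so is $\phi$. Setting $M_{\un g}':=Y^{\un g_+}X^{\un g_-}\in A_{-\un g}$, I would compute $M_{\un g}M_{\un g}'\in A_0=\rho(R)$ by braiding the central block $X^{\un g_+}Y^{\un g_+}$ via $X_iY_j=\mu_{ij}Y_jX_i$ ($i\neq j$) so that each $X_i$ meets its partner $Y_i$, collapsing each resulting same-index block $X_i^{k}Y_i^{k}$ to $\prod_{l=1}^{k}\si_i^{l}(t_i)\in R$ using $X_iY_i=\si_i(t_i)$, moving that scalar past the outer $Y^{\un g_-}$ by the biaction relations, and reducing the trailing $Y^{\un g_-}X^{\un g_-}$-block analogously using $Y_iX_i=t_i$. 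The outcome is an identity $M_{\un g}M_{\un g}'=\xi\,Q$ with $\xi\in\K^{\times}$ and $Q\in R$ a product of translates $\si_j^{m}(t_i)$. Each such factor is regular in $R$, since $(R,\si,t)$ is regular and the $\si_g$ are $\K$-algebra automorphisms (which preserve regularity); therefore $Q$ is regular, and $rM_{\un g}=0$ forces $r\xi Q=0$ in $R$ (using that $\rho$ is injective by $\mu$-consistency), whence $r=0$. I expect the main delicate point to be precisely this braiding computation: one must carefully track the $\mu_{ij}$-scalars accumulated while braiding and verify regularity of the emergent product of translates $\si_j^{m}(t_i)$, but once the normal form in $B$ is available, both reduce to routine bookkeeping.
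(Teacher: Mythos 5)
Your proposal is correct and follows essentially the same route as the paper's proof: establish the normal form $r\,Y^{\un g_-}X^{\un g_+}$ in the presented ring, pair each monomial with a degree-$(-\un g)$ partner so the product lands in $R$ as a product of translates $\si_h(t_i)$, and use regularity of the $t_i$ (plus $\mu$-consistency, i.e.\ injectivity of $\rho$) to kill the coefficient $r$. The paper phrases this as showing that the reduction ideal $\TGWI{R}{\si}{t}{\mu}$ of the TGW construction is contained in the ideal generated by the three commutation relators, whereas you phrase it as injectivity of the graded surjection $\phi:B\twoheadrightarrow A$ on each homogeneous component via comparison of cyclic $R$-modules; these are the same argument in two equivalent packagings. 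One minor practical difference: the paper picks the partner monomial $Z_n^{(-g_n)}\cdots Z_1^{(-g_1)}$ in reversed order so that like-indexed pairs $Z_i^{(g_i)}Z_i^{(-g_i)}$ meet immediately and no braiding through the $\mu_{ij}$ relations is needed, while your choice $Y^{\un g_+}X^{\un g_-}$ forces the extra (harmless, but bookkeeping-heavy) braiding step you flag as the delicate point; adopting the paper's ordering would eliminate it.
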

\begin{proof}
The statement is equivalent to that the ideal $I:=\TGWI{R}{\si}{t}{\mu}$
of the TGW construction $A'=\TGWC{R}{\si}{t}{\mu}$ used to
construct $A$ is generated by the set
\begin{equation}\label{eq:gens}
\big\{X_iX_j-\ga_{ij}\mu_{ij}^{-1}X_jX_i,\;
   Y_jY_i-\ga_{ij}\mu_{ji}^{-1}Y_iY_j\mid i\neq j\big\}.
\end{equation}
Let $J$ denote the ideal in $A'$ generated by \eqref{eq:gens}.
Assume $a\in I$. We will prove that $a\in J$.
Since $I$ is graded, we can without loss of generality
assume that $a$ is homogenous. Let $g=\deg a\in \Z^n$.
Without loss of generality we can also add to $a$ any element of $J$, and thus
can rearrange the $Y$'s and the $X$'s in the terms in $a$ so to obtain
\[a=rZ_1^{(g_1)}\cdots Z_n^{(g_n)}\]
for some $r\in R$, where $Z_i^{(k)}$ equals $X_i^k$ if $k\ge 0$ and $Y_i^{-k}$ otherwise.
Put $b=Z_n^{(-g_n)}\cdots Z_1^{(-g_1)}$. Then $ab$ has degree
zero so that $ab\in I\cap R=\{0\}$, forcing $ab=0$. On the other hand,
using \eqref{eq:tgwarels1},\eqref{eq:tgwarels2} repeatedly we have
$ab=rs$ where $s\in R$ is a product of elements of the form $\si_g(t_i)$
($g\in\Z^n$, $i\in\{1,\ldots,n\}$).
Since the $t_i$ are assumed to be regular in $R$ we must have $r=0$, i.e. $a=0\in J$.
\end{proof}

\subsection{Centralizer intersections and a simplicity criterion}
Let $G$ be a group acting by automorphisms on a ring $R$.
An ideal $J$ of $R$ is called $G$-invariant if $g(J)\subseteq J$ for
all $g\in G$. The ring $R$ is called $G$-simple if the only $G$-invariant
ideals are $R$ and $\{0\}$.
If $A\subseteq B$ are rings, then we let $C_B(A)$ denote the
centralizer of $A$ in $B$: $C_B(A)=\{b\in B\mid ba=ab\,\forall a\in A\}$.
In \cite[Th.~3.6, Th.~7.18]{HO} the following statements are proved.
\begin{thm}\label{thm:oinert}
\begin{enumerate}[{\rm (a)}]
\item \label{it:intersection}
Let $A=\TGWA{R}{\si}{t}{\mu}$ be a TGW algebra where $R$ is commutative
and $(R,\si,t)$ is $\mu$-consistent.
 Let $J$ be any nonzero ideal of $A$. Then $J\cap C_A(R)\neq 0$.
\item\label{it:tgwasimplicity2}
Let $A=\TGWA{R}{\si}{t}{\mu}$ be a TGW algebra,
and assume $(R,\si,t)$ is $\mu$-consistent.
Suppose $A$ is $\K$-finitistic of Lie type $(A_1)^n$.
Then $A$ is simple if and only if (1) $R$ is $\Z^n$-simple, (2) $Z(A)\subseteq R$, and (3) $Rt_i+R\si_i^d(t_i)=R$
for all $d\in\Z_{>0}$ and $i=1,\ldots,n$.
\end{enumerate}
\end{thm}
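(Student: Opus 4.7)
The plan is to prove (a) first, then feed it into the backward direction of (b).

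For (a), I would run a minimal-support argument in the $\Z^n$-grading. Given a nonzero $a \in J$, decompose $a = \sum_{g \in S} a_g$ with finite support $S \subseteq \Z^n$, and pick $a$ with $|S|$ minimal among nonzero elements of $J$. From \eqref{eq:tgwarels1}, for $a_g$ homogeneous of degree $g$ one has $a_g r = \sigma_g(r)\, a_g$, so $ra - ar = \sum_{g \in S}(r - \sigma_g(r))\, a_g$ lies in $J$ with support inside $S$. If $ra - ar \neq 0$ for some $r$, then commutativity of $R$ together with the freedom to choose $r$ should let one arrange a component $(r - \sigma_{g_0}(r))a_{g_0}$ to vanish while $ra - ar$ remains nonzero, producing a smaller-support element of $J$ and contradicting minimality. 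Hence $ra = ar$ for all $r \in R$, giving $a \in J \cap C_A(R) \neq 0$.

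For the forward direction of (b), suppose $A$ is simple. For (1), any $\Z^n$-invariant ideal $\mathfrak{a} \trianglelefteq R$ gives rise to a graded two-sided ideal $\mathfrak{a}A$ of $A$ (two-sidedness uses the $\Z^n$-invariance together with the $\sigma$-twist in \eqref{eq:tgwarels1}), meeting the degree-zero component in exactly $\mathfrak{a}$ by Lemma~\ref{lem:easy}; simplicity then forces $\mathfrak{a} \in \{0, R\}$. For (2), a central $c$ of nonzero degree decomposes into central homogeneous pieces, and a central homogeneous $c_g$ with $g \neq 0$ would make $Ac_g$ a proper nonzero graded ideal unless $c_g$ is invertible; the explicit $(A_1)^n$ presentation from Theorem~\ref{thm:A1n_pres} precludes such invertible pieces of nonzero degree. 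For (3), if $Rt_i + R\sigma_i^d(t_i) \neq R$ for some $i,d$, pick a maximal ideal $\mathfrak{m}$ containing both generators; using the relations $Y_iX_i = t_i$ and $X_iY_i = \sigma_i(t_i)$ iterated up to degree $d$, one manufactures a proper nonzero ideal of $A$, contradicting simplicity.

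For the backward direction, let $J$ be a nonzero ideal. By (a), $J \cap C_A(R)$ contains a nonzero element $c$. Theorem~\ref{thm:A1n_pres} makes $C_A(R)$ explicit as an $R$-linear span of specific monomials in the $X_i, Y_i$ (those whose total degree lies in $\ker\sigma$ up to the action of $R$); hypothesis (2) is used to eliminate higher-degree central contributions, while (3) lets one absorb the diagonal factors $X_i^d Y_i^d$ (scalar multiples of products $\prod_{k}\sigma_i^k(t_i)$) into $R$ using coprimality $1 \in Rt_i + R\sigma_i^d(t_i)$. This yields a nonzero element of $J \cap R$, which is automatically $\Z^n$-invariant, and (1) then upgrades $J \cap R$ to all of $R$, so $J = A$.

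The main obstacle is this last reduction: part (a) only delivers an element of $C_A(R)$, and bridging from there down to an element of $R$ requires the combined use of (2), (3), and the $(A_1)^n$ presentation of Theorem~\ref{thm:A1n_pres}. Condition (3) is precisely tailored to collapse the diagonal generators sitting above $R$ inside $C_A(R)$, and the combinatorial interplay with (2) is where the technical content of the argument lives.
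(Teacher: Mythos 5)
The paper offers no proof of Theorem~\ref{thm:oinert}; both parts are quoted from Hartwig--\"Oinert \cite{HO}, so there is no internal argument to compare against. Judged on its own terms, your sketch has at least two real gaps. For part (a), the minimal-support reduction does not close. If $a \in J$ is nonzero with support $S$ minimal and $0 \in S$, then $ra-ar$ has support inside $S\setminus\{0\}$ for every $r\in R$, so minimality forces $ra=ar$ and $a\in C_A(R)$; that case is fine. But when $0\notin S$ the argument stalls. For a singleton $S=\{g_0\}$ with $g_0\neq 0$, the two things you want --- ``$(r-\si_{g_0}(r))a_{g_0}=0$'' and ``$ra-ar\neq 0$'' --- are literally the same element of $A$ (since $ra-ar=(r-\si_{g_0}(r))a_{g_0}$), so they cannot be arranged to hold and fail simultaneously; there is no component to spare. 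Passing to $\si_{g_0}(r)a-ar$ instead kills the $g_0$-component automatically and, by minimality, yields $ar=\si_{g_0}(r)a$ for all $r$, but that is a $\si_{g_0}$-twisted commutation relation, not membership in $C_A(R)$. You need an additional idea (e.g.\ multiplying $a$ by a monic monomial of degree $-g_0$ and analyzing when this annihilates $a$) to reduce to the case $0\in S$.

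The backward direction of (b) has a second gap: the claim that $J\cap R$ is ``automatically $\Z^n$-invariant'' is false in general. From $r\in J\cap R$ one gets $X_ir=\si_i(r)X_i\in J$, and then $\si_i(r)X_iY_i=\si_i(rt_i)\in J\cap R$; one does \emph{not} obtain $\si_i(r)\in J$ unless $t_i$ (or $X_i$) is invertible. Obtaining a $\Z^n$-invariant ideal inside $J$ is exactly what hypothesis (3) is for, and it is where the substance of the argument lies: one must use (a), (2), and the $(A_1)^n$ relations to push $J\cap C_A(R)$ down into $J\cap R$, then use (3) to enlarge to an invariant ideal, and only then invoke (1). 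Your proposal correctly flags the ingredients but leaves both the descent into $R$ and the invariance step unargued, and the forward-direction claim that the $(A_1)^n$ presentation ``precludes invertible pieces of nonzero degree'' is likewise asserted rather than shown. The outline points in the right direction, but as written the proof is not complete.
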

Part (b) is proved in \cite[Th.~7.18]{HO} in the more general context of so called $R$-finitistic TGW algebras. The result is a generalization of D. Jordan's simplicity criterion for generalized Weyl algebras \cite[Theorem 6.1]{Jordan1993}.

\section{Multiparameter twisted Weyl algebras} \label{sec:mtwa}

Now we define a special class of twisted generalized Weyl algebras. The definition of these algebras was inspired by 
a class of \emph{multiparameter Weyl algebras} introduced by Benkart \cite{Be}.  

\subsection{Definition} \label{sec:MTWA_def}
Let $n\in\Z_{>0}$ and $k\in\Z\backslash\{0\}$ and let
 $\Lambda=(\lambda_{ij})$, $r=(r_{ij})$ and $s=(s_{ij})$ be three
   $n\times n$-matrix with entries from $\K\backslash\{0\}$, such that
\begin{subequations}\label{eq:mtwa_conds}
\begin{align}
&\text{ $\la_{ii}=1\; \forall i$ and $\la_{ij}\la_{ji}=1\; \forall i\neq j$, }\label{eq:lacond}\\
&\text{ $r_{ii}/s_{ii}$ is a nonroot of unity $\forall i$, } \label{eq:rscond1}\\
&\text{ $r_{ij}^k=s_{ij}^k\;\;\forall i\neq j$.} \label{eq:rscond2}
\end{align}
\end{subequations}
Let
\begin{equation}
R=\K[u_1^{\pm 1},\ldots,u_n^{\pm 1}, v_1^{\pm 1},\ldots,v_n^{\pm 1}]
\end{equation}
be the Laurent polynomial ring over $\K$ in $2n$ indeterminates,
define $\si_1,\ldots,\si_n\in\Aut_\K(R)$ by
\begin{equation}\label{eq:laurentsigmas}
\si_i(u_j)=r_{ij}^{-1}u_j,\qquad \si_i(v_j)=s_{ij}^{-1}v_j,
\end{equation}
for all $i,j\in\{1,\ldots,n\}$, and define $t_1,\ldots,t_n\in R$ by
\begin{equation}\label{eq:mtwa_ti_def}
t_i=\frac{(r_{ii}u_i)^k - (s_{ii}v_i)^k}{r_{ii}^k-s_{ii}^k}.
\end{equation}
Finally, put 
\begin{equation}
\mu_{ij}=r_{ji}^{-k}\la_{ji}
\end{equation}
for all $i\neq j$.
Then one easily checks that the consistency relations \eqref{eq:muconsistency1},
\eqref{eq:muconsistency2} hold.
Thus, by Theorem \ref{thm:muconsistency}, the TGW datum $(R,\si,t)$ is $\mu$-consistent,
that is, the natural map $\rho:R\to\TGWA{R}{\si}{t}{\mu}$ is injective.
We denote the TGW algebra $\TGWA{R}{\si}{t}{\mu}$ by $\MTWA{n}{k}{r}{s}{\La}$ and call it a
\emph{multiparameter twisted Weyl algebra}. It is easy to see that it is $\K$-finitistic of
Lie type $(A_1)^n$ and
thus, by Theorem \ref{thm:A1n_pres}, $\MTWA{n}{k}{r}{s}{\La}$ is isomorphic to the unital associative $\K$-algebra
generated by $u_i^{\pm 1},\, v_i^{\pm 1},\, X_i,\, Y_i\,(i=1,\ldots,n)$ modulo the relations
\begin{subequations}\label{eq:mtwarels}
\begin{gather}
\text{the $u_i^{\pm 1}, v_j^{\pm 1}$ all commute and $u_iu_i^{-1}=v_iv_i^{-1}=1\,\forall i$,}\\
X_iX_j=\big(\frac{r_{ji}}{r_{ij}}\big)^k\la_{ij} X_jX_i  \quad\forall i,j,\\
Y_iY_j=\la_{ij}Y_jY_i \quad\forall i,j,\\
X_iY_j=r_{ji}^{-k}\la_{ji}Y_jX_i \quad\forall i\neq j,\\
Y_iX_i=\frac{(r_{ii}u_i)^k - (s_{ii}v_i)^k}{r_{ii}^k-s_{ii}^k}, \qquad
X_iY_i=\frac{u_i^k - v_i^k}{r_{ii}^k-s_{ii}^k} \quad\forall i,\\
X_iu_j=r_{ij}^{-1} u_jX_i,\quad X_iv_j=s_{ij}^{-1} v_jX_i,\quad 
Y_iu_j=r_{ij}u_jY_i,\quad Y_iv_j=s_{ij} v_jY_i\quad\forall i,j.
\end{gather}
\end{subequations}

\begin{rem}
One can also consider the larger class of algebras in which \eqref{eq:rscond1}
in the definition of $\MTWA{n}{k}{r}{s}{\La}$ is replaced by the weaker condition that 
$r_{ii}^k\neq s_{ii}^k$ for all $i$. However in this paper
we will always assume \eqref{eq:rscond1}, which in examples corresponds
to that ``$q$ is not a root of unity''.
\end{rem}

\subsection{Properties of multiparameter twisted Weyl algebras}
Let $R^{\Z^n}=\{r\in R\mid \si_i(r)=r\;\forall i=1,\ldots,n\}$ be the
invariant subring of $R$ under $\Z^n$.
For $d\in\Z^{2n}$, put $u^d=u_1^{d_1}\cdots u_n^{d_n}v_1^{d_{n+1}}\cdots v_n^{d_{2n}}$.
Let 
\begin{equation}\label{eq:Gdef}
G=\{d\in\Z^{2n}\mid u^d \in R^{\Z^n}\}.
\end{equation}
We have $R^{\Z^n}=\bigoplus_{d\in G} \K u^d$.

\begin{prp}\label{prp:maxcomm}
\begin{enumerate}[{\rm (a)}]
\item If $J$ is a proper $\Z^n$-invariant ideal of $R$,
then the group homomorphism $\Z^n\to\Aut_\K(R/J)$, induced by
the $\Z^n$-action on $R$, is injective.
\item If $J$ is a proper $\Z^n$-invariant ideal of $R$,
such that $R/J$ is $\Z^n$-simple or a domain, then
$R/J$ is maximal commutative in $\bar A:=\TGWA{R/J}{\bar\si}{\bar t}{\mu}$.
\end{enumerate}
\end{prp}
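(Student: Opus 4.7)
For Part (a), I would assume $\bar\si_g = \mathrm{id}$ on $R/J$ for some $g = (g_1,\ldots,g_n) \in \Z^n$ and derive $g = 0$ by testing against the Laurent generators. Since $\bar\si_g(u_j) = \big(\prod_i r_{ij}^{-g_i}\big) u_j$, the hypothesis forces $\big(\prod_i r_{ij}^{-g_i} - 1\big) u_j \in J$; because $u_j \in R^\times$ and $J$ is proper, we must have $\prod_i r_{ij}^{g_i} = 1$, and symmetrically $\prod_i s_{ij}^{g_i} = 1$ for every $j$. The ratio $\prod_i (r_{ij}/s_{ij})^{g_i} = 1$, raised to the $k$-th power, annihilates all off-diagonal factors by \eqref{eq:rscond2}, leaving $(r_{jj}/s_{jj})^{k g_j} = 1$. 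Since $r_{jj}/s_{jj}$ is not a root of unity by \eqref{eq:rscond1} and $k \neq 0$, this forces $g_j = 0$.

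For Part (b), my plan is a graded decomposition followed by a case split. Given $a \in C_{\bar A}(R/J)$, decompose $a = \sum_{g \in \Z^n} a_g$ with $a_g \in \bar A_g$; since $R/J \subseteq \bar A_0$ and the gradation is by an abelian group, each $a_g$ separately commutes with $R/J$. Iterating the relations \eqref{eq:tgwarels1} gives $r a_g = a_g \bar\si_{-g}(r)$ for every $r \in R/J$, so the commutation hypothesis becomes $a_g \big(\bar\si_{-g}(r) - r\big) = 0$ for all $r$. It then suffices to show $a_g = 0$ whenever $g \neq 0$, for then by Lemma~\ref{lem:easy}(b) we have $a = a_0 \in \bar A_0 = R/J$.

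In the \emph{domain case}, I would first verify $\bar t_i \neq 0$ in $R/J$: otherwise $r_{ii}^k u_i^k - s_{ii}^k v_i^k \in J$, and $\bar\si_i$-invariance of $J$ gives $u_i^k - v_i^k \in J$ as well, so the combination $(r_{ii}^k u_i^k - s_{ii}^k v_i^k) - r_{ii}^k(u_i^k - v_i^k) = (r_{ii}^k - s_{ii}^k) v_i^k$ lies in $J$; by \eqref{eq:rscond1} this forces $v_i^k \in J$, contradicting the properness of $J$. Hence $\bar t_i$ is regular in the domain $R/J$; the consistency identities descend to the quotient, so Theorem~\ref{thm:muconsistency} gives $\mu$-consistency of $(R/J, \bar\si, \bar t)$, and Proposition~\ref{prp:domain} shows that $\bar A$ is a domain. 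By Part (a) one finds $r$ with $\bar\si_{-g}(r) - r \neq 0$ in $R/J$, and the domain property of $\bar A$ forces $a_g = 0$. In the \emph{$\Z^n$-simple case}, let $L_g \subseteq R/J$ denote the ideal generated by $\{\bar\si_{-g}(r) - r : r \in R/J\}$; invariance follows from $\bar\si_h(\bar\si_{-g}(r) - r) = \bar\si_{-g}(\bar\si_h(r)) - \bar\si_h(r)$, and $L_g \neq 0$ for $g \neq 0$ by Part (a), so $\Z^n$-simplicity yields $L_g = R/J$. Writing $1 = \sum_\ell s_\ell(\bar\si_{-g}(r_\ell) - r_\ell)$ and using $a_g s_\ell = \bar\si_g(s_\ell) a_g$ to pass each $s_\ell$ past $a_g$ reduces every summand to zero, giving $a_g = a_g \cdot 1 = 0$.

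The main obstacle is that the two sub-cases of (b) require genuinely different arguments: $\Z^n$-simplicity alone does not give that $\bar A$ is a domain, so one cannot directly cancel a single nonzero $\bar\si_{-g}(r) - r$, and must instead produce $1$ globally via the $\Z^n$-invariant ideal $L_g$. In the domain case the cleaner cancellation succeeds only after the preliminary verification $\bar t_i \neq 0$, which is precisely what unlocks Theorem~\ref{thm:muconsistency} and Proposition~\ref{prp:domain}.
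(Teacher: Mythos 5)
Your proof is correct. Part (a) follows essentially the same track as the paper: test $\bar\si_g=\mathrm{id}$ against the Laurent generators, use invertibility of $u_j,v_j$ and properness of $J$ to get the scalar identities $\prod_i r_{ij}^{g_i}=\prod_i s_{ij}^{g_i}=1$, raise to the $k$-th power so that \eqref{eq:rscond2} eliminates the off-diagonal factors, and invoke \eqref{eq:rscond1} to force $g_j=0$ for each $j$. The paper absorbs the $k$-th power into the group element (it applies $\si_{kg}$ rather than first applying $\si_g$ and then raising), but that is cosmetic.

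For part (b) the paper does not argue at all: it simply quotes Corollary~5.2 of [HO]. You instead reconstruct the argument, and what you write is the canonical proof of that citation: decompose $a\in C_{\bar A}(R/J)$ into graded pieces $a_g$ (which commute with $R/J$ componentwise since $R/J\subseteq\bar A_0$), rewrite commutation as $a_g(\bar\si_{-g}(r)-r)=0$ for all $r$, and kill every $a_g$ with $g\neq 0$. Your bifurcation into the domain branch and the $\Z^n$-simple branch is genuinely forced, and both halves are handled correctly. In the domain branch, the preliminary verification that $\bar t_i\neq 0$ is a real and necessary detail: it supplies the regularity hypothesis of Theorem~\ref{thm:muconsistency}, which unlocks Proposition~\ref{prp:domain} and lets you cancel a single nonzero $\bar\si_{-g}(r)-r$ produced by part (a). In the $\Z^n$-simple branch you instead manufacture $1$ inside the $\Z^n$-invariant ideal $L_g$ and pass $a_g$ across; this never needs $\mu$-consistency of the quotient datum. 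The only small gap is that writing $R/J\subseteq\bar A_0$ presupposes that $\rho:R/J\to\bar A$ is injective; you establish this in the domain case but leave it tacit in the $\Z^n$-simple case. That is harmless — the statement is only meaningful under this assumption and your computation in fact proves $C_{\bar A}(\rho(R/J))=\rho(R/J)$ regardless — but a sentence acknowledging it would close the loop. In sum, part (a) matches the paper and part (b) trades the paper's black-box citation for a self-contained argument; you gain transparency at the cost of a few extra lines.
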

\begin{proof}
(a) Assume $g=(g_1,\ldots, g_n)\in\Z^n$ is such that $\si_g(p+J)=p+J$
for all $p+J\in R/J$.
Suppose that $g_i\neq 0$ for some $i$.
Then, taking $p=u_i$ we have $u_i+J=\si_{kg}(u_i)+J=
r_{1i}^{kg_1}\cdots r_{ni}^{kg_n} u_i+J$,
giving $(r_{1i}^{kg_1}\cdots r_{ni}^{kg_n}-1)u_i\in J$.
Since $J$ is proper and $u_i$ invertible we must have
$r_{1i}^{kg_1}\cdots r_{ni}^{kg_n}=1$.
Similarly taking $p=v_i$
gives that $s_{1i}^{kg_1}\cdots s_{ni}^{kg_n}=1$. But $r_{ij}^k=s_{ij}^k$ for $i\neq j$
and thus we get $r_{ii}^{kg_i}/s_{ii}^{kg_i}=1$, contradicting the fact that
$r_{ii}/s_{ii}$ is not a root of unity. Thus $g_i=0$ for all $i$.

(b) Follows from part (a) and \cite[Corollary 5.2]{HO}.
\end{proof}

\begin{prp}\label{prp:ideals_are_graded}
Any ideal of $A$ is graded.
\end{prp}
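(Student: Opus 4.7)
The plan is to exploit the abundance of units in $R$, namely the Laurent monomials $u^d$ for $d\in\Z^{2n}$, together with the faithfulness of the $\Z^n$-action on $R$, in order to separate homogeneous components inside any ideal of $A$.

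First I would observe that, because $\si_i(u_j)=r_{ij}^{-1}u_j$ and $\si_i(v_j)=s_{ij}^{-1}v_j$, each Laurent monomial $u^d$ is a simultaneous eigenvector for all $\si_g$, with eigenvalue $\chi(g,d)\in\K^\times$ defining a bi-homomorphism $\chi:\Z^n\times\Z^{2n}\to\K^\times$. Proposition \ref{prp:maxcomm}(a) applied with $J=0$ says $\Z^n\to\Aut_\K(R)$ is injective, so for every nonzero $g\in\Z^n$ there exists $d$ with $\chi(g,d)\neq 1$. The key commutation rule, which follows from Lemma \ref{lem:easy}(\ref{it:monomialgeneration}) and the relations \eqref{eq:tgwarels1}, is that for any homogeneous $a_g$ of degree $g\in\Z^n$ and any $r\in R$ one has $a_g r=\si_g(r)a_g$. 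Specialising to $r=u^d$ and rearranging gives
\[
u^d a_g u^{-d}=\chi(g,d)^{-1} a_g,
\]
so conjugation by $u^d$ preserves every ideal of $A$ and acts by a character on each homogeneous component.

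Now take an ideal $I$ and an arbitrary $a=\sum_{g\in F}a_g\in I$ with finite support $F\subset\Z^n$. I would prove $a_g\in I$ for each $g\in F$ by strong induction on $|F|$. The case $|F|\le 1$ is trivial. For $|F|\ge 2$, pick $g_0\in F$ and some $g_1\in F\setminus\{g_0\}$, and using the injectivity above choose $d\in\Z^{2n}$ with $\chi(g_1-g_0,d)\neq 1$. Then
\[
b:=u^d a u^{-d}-\chi(g_0,d)^{-1}a=\sum_{g\in F}\bigl(\chi(g,d)^{-1}-\chi(g_0,d)^{-1}\bigr)a_g\in I
\]
has vanishing component at $g_0$ and nonzero component at $g_1$, so $\Supp(b)\subsetneq F$. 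The induction hypothesis, applied to $b$, forces each of its homogeneous components into $I$; dividing by the nonzero scalars recovers $a_g\in I$ for every $g\in F$ with $\chi(g,d)\neq \chi(g_0,d)$, including $g=g_1$. Subtracting these known-to-be-in-$I$ pieces from $a$ produces another element of $I$ whose support is strictly contained in $F$, and a second application of the induction hypothesis yields $a_g\in I$ for the remaining indices.

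The main, though rather mild, obstacle is that a single $d$ need not separate all pairs $(g,g_0)$ from $F$ at once: the ``bad'' $d$'s for a given pair form a proper subgroup of $\Z^{2n}$, and a finite union of such subgroups can cover $\Z^{2n}$ when $\K$ is finite. The two-stage induction above sidesteps this and keeps the proof valid for an arbitrary field, which matters because no cardinality assumption on $\K$ is in force in this paper.
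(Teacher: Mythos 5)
Your proof is correct and essentially matches the paper's: both arguments rest on the same two facts, namely that conjugation by a Laurent monomial $u^d$ scales each graded component $a_g$ by the character value $\chi(g,d)^{-1}$, and that injectivity of $\Z^n\to\Aut_\K(R)$ (Proposition~\ref{prp:maxcomm}(a)) provides, for any two distinct degrees, some $d$ separating them. The paper fixes a target degree $h$ and iteratively kills the other components one at a time (choosing a fresh $d$ at each step), while you phrase it as a two-stage induction on $|\Supp(a)|$; this is the same elimination idea, and both versions are valid over an arbitrary field since neither needs a single $d$ to separate all pairs at once.
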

\begin{proof}
Let $J$ be any ideal in $A$ and let $a\in J$. Write $a=\sum_{g\in\Z^n}a_g$, where
$a_g\in A_g$ for each $g$. Pick any $h\in\Z^n$. We will show that $a_h\in J$.
By Proposition \ref{prp:maxcomm}a), the group morphism $\Z^n\to\Aut_\K(R)$ is injective.
So if $g\in\Z^n$, $g\neq h$, then there is a $d\in\Z^{2n}$ such that $\si_h(u^d)\neq \si_g(u^d)$.
By definition of the automorphisms $\si_i$ we have
 $\si_h(u^d)=\xi_h u^d$ and $\si_g(u^d)=\xi_g u^d$, for some nonzero $\xi_g,\xi_h\in\K$.
Put $b=\xi_g a - u^{-d} a u^d$. Then $b\in J$ and writing $b=\sum_{f\in\Z^n} b_f$
where $b_f\in A_f$ we have $b_g = \xi_g a_g- u^{-d} a_g u^d = (\xi_g-u^{-d}\si_g(u^d))a_g=0$,
and $b_h = \xi_g a_h - u^{-d}a_h u^d = (\xi_g-\xi_h)a_h$.
So, replacing $a$ by $(\xi_g-\xi_h)^{-1}b$, we have an element in $J$ with the same degree $h$
component but with the $g$ component eliminated. Repeating this we can eliminate
all components except $a_h$ and thus we obtain that $a_h\in J$.
\end{proof}

\begin{prp}\label{prp:ideal_bijection}
Let $\mathfrak{I}(R^{\Z^n})$ denote the set of ideals of $R^{\Z^n}$
and $\mathfrak{I}(R)^{\Z^n}$ denote the set of $\Z^n$-invariant ideals of $R$.
Consider the maps
\begin{align*}
\varepsilon: &\mathfrak{I}(R^{\Z^n})\to \mathfrak{I}(R)^{\Z^n},\quad \nf\mapsto R\nf, \\
\rho:  &\mathfrak{I}(R)^{\Z^n} \to \mathfrak{I}(R^{\Z^n}), \quad J\mapsto R^{\Z^n}\cap J.
\end{align*}
Then $\varepsilon$ and $\rho$ are inverse to eachother and set up an order-preserving
bijection between the two sets.
In particular, for each $\nf\in\Specm(R^{\Z^n})$,
 $R\nf$ is maximal among $\Z^n$-invariant ideals of $R$ and conversely,
 every maximal $\Z^n$-invariant ideal of $R$ equals $R\nf$ for some $\nf\in\Specm(R^{\Z^n})$.
\end{prp}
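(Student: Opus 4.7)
The plan is to verify the two set-theoretic identities $\rho\circ\varepsilon=\mathrm{id}$ on $\mathfrak{I}(R^{\Z^n})$ and $\varepsilon\circ\rho=\mathrm{id}$ on $\mathfrak{I}(R)^{\Z^n}$; order-preservation of each map is immediate. The key structural observation is that $R$ carries a $\Z^{2n}$-grading $R=\bigoplus_{d\in\Z^{2n}}\K u^d$ which, by the very definition of $G$, restricts to $R^{\Z^n}=\bigoplus_{d\in G}\K u^d$. Choosing once and for all a system of coset representatives $\{d_i\}_{i\in I}$ of $\Z^{2n}/G$ with $d_0=0$, I obtain the direct sum decomposition $R=\bigoplus_i u^{d_i}R^{\Z^n}$, exhibiting $R$ as a free $R^{\Z^n}$-module. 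Each summand $u^{d_i}R^{\Z^n}$ is the $\chi_{d_i}$-isotypic component for the $\Z^n$-action, where $\chi_{d_i}\colon\Z^n\to\K^\times$ is defined by $\si_g(u^{d_i})=\chi_{d_i}(g)u^{d_i}$, and by definition of $G$ distinct coset representatives produce distinct characters.

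The identity $\rho\circ\varepsilon=\mathrm{id}$ is then almost immediate: given $\nf\in\mathfrak{I}(R^{\Z^n})$ we have $R\nf=\bigoplus_i u^{d_i}\nf$ via the free basis, and intersecting with $R^{\Z^n}=u^{d_0}R^{\Z^n}$ picks out exactly the $d_0$-summand, which is $\nf$. For $\varepsilon\circ\rho=\mathrm{id}$, the inclusion $R(R^{\Z^n}\cap J)\subseteq J$ is automatic since $J$ is an ideal. The reverse inclusion reduces, for any $a\in J$ written as $a=\sum_i u^{d_i}b^{(i)}$ with $b^{(i)}\in R^{\Z^n}$, to proving that each summand $u^{d_i}b^{(i)}$ lies in $J$; once this is known, $b^{(i)}=u^{-d_i}(u^{d_i}b^{(i)})\in J\cap R^{\Z^n}$ (as $u^{-d_i}\in R$ and $J$ is an ideal), and summing over $i$ yields $a\in R(R^{\Z^n}\cap J)$.

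The main obstacle is exactly this ``extraction of isotypic components'' inside the $\Z^n$-invariant ideal $J$, and for it I plan to transplant the iterative elimination procedure used in the proof of Proposition~\ref{prp:ideals_are_graded}. Using $\Z^n$-invariance of $J$, the element $\si_g(a)=\sum_i\chi_{d_i}(g)u^{d_i}b^{(i)}$ lies in $J$ for every $g\in\Z^n$. Because distinct characters $\chi_{d_i}$ must disagree somewhere on $\Z^n$, for any index $i$ occurring nontrivially in $a$ with $i\neq i_0$ one can pick $g\in\Z^n$ with $\chi_{d_{i_0}}(g)\neq\chi_{d_i}(g)$; the combination $\chi_{d_i}(g)\,a-\si_g(a)$ then lies in $J$, kills the $i$-th summand entirely, and leaves a nonzero scalar multiple of the $i_0$-th. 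Iterating over the finitely many indices with $b^{(i)}\neq 0$ produces a nonzero scalar multiple of $u^{d_{i_0}}b^{(i_0)}$ inside $J$, whence $u^{d_{i_0}}b^{(i_0)}\in J$ since $\K$ is a field. The closing assertion concerning $\Specm(R^{\Z^n})$ and maximal $\Z^n$-invariant ideals is then a formal consequence of the order-preserving bijection.
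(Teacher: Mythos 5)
Your proof is correct. The decomposition you work with — $R=\bigoplus_i u^{d_i}R^{\Z^n}$ indexed by coset representatives of $\Z^{2n}/G$ — is exactly the isotypic (character) decomposition $R=\bigoplus_\chi R[\chi]$ the paper uses, since $u^{d_i}R^{\Z^n}$ is precisely the $\chi_{d_i}$-eigenspace for the $\Z^n$-action; and your argument for $\rho\varepsilon=\mathrm{id}$ matches the paper's essentially verbatim. Where you diverge is in proving $\varepsilon\rho=\mathrm{id}$: the paper observes that each homogeneous piece $R[\chi]$ contains a unit, making $R$ strongly $\K[\Z^n]^\ast$-graded, and deduces $J[\chi]=R[\chi]R[\chi^{-1}]J[\chi]\subseteq R[\chi]J[\boldsymbol{1}]$ in one line; you instead extract the isotypic components of an element of $J$ by the iterative ``character elimination'' borrowed from Proposition~\ref{prp:ideals_are_graded}, then multiply by $u^{-d_i}$. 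Both are valid. The strong-gradation argument is shorter and structurally cleaner; your elimination argument is more hands-on and has the pedagogical merit of reusing machinery already developed in the paper, and it makes explicit that the only inputs are the distinctness of the characters and $\K$ being a field, rather than the existence of units in each isotypic component. One small remark: your step $b^{(i)}=u^{-d_i}(u^{d_i}b^{(i)})\in J\cap R^{\Z^n}$, followed by re-multiplying by $u^{d_i}$, is exactly the strong-gradation step in disguise, so the two proofs are closer in spirit than they may first appear.
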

\begin{proof}
We can view $R$ as a module over $\K[\Z^n]$ by linearly extending the $\Z^n$-action on $R$.
Let $\K[\Z^n]^\ast$ be the group of characters (i.e. algebra morphisms $\K[\Z^n]\to \K$).
The product is given by $\chi_1\chi_2(g)=\chi_1(g)\chi_2(g)$ for $\chi_1,\chi_2\in\K[\Z^n]^\ast, g\in\Z^n$.
By definition of $\si_i$,
there is for each $d\in\Z^{2n}$ a $\chi\in\K[\Z^n]^\ast$ such that $\si_g(u^d)=\chi(g)u^d$ for all $g\in\Z^n$.
Thus
\begin{equation}\label{eq:Rsemisimple}
R=\bigoplus_{\chi\in\K[\Z^n]^\ast} R[\chi],\quad R[\chi]=\{r\in R\mid a.r=\chi(a)r\,\forall a\in\K[\Z^n]\}.
\end{equation}
In particular $R$ is semisimple as a module over $\K[\Z^n]$.
Using that each $R[\chi]$ is spanned by certain $u^d$, one verifies that
the decomposition \eqref{eq:Rsemisimple} turns $R$ into a strongly $\K[\Z^n]^\ast$-graded ring,
that is, $R[\chi_1]R[\chi_2]=R[\chi_1\chi_2]$ for all $\chi_1,\chi_2\in\K[\Z^n]^\ast$.
Moreover $R^{\Z^n}=R[\boldsymbol{1}]$ where $\boldsymbol{1}$ is the unit in
the character group $\K[\Z^n]^\ast$, given by $\boldsymbol{1}(g)=1$ for all $g\in\Z^n$.

We now show the maps $\varepsilon$ and $\rho$ are inverses to eachother.
Let $J$ be any $\Z^n$-invariant ideal of $R$. Thus it is a $\K[\Z^n]$-submodule of $R$,
and therefore $J=\bigoplus_{\chi\in\K[\Z^n]^\ast} J[\chi]$, where
$J[\chi]=R[\chi]\cap J$. Using the strong gradation property we have
$J[\chi]=R[\chi]R[\chi^{-1}]J[\chi]\subseteq R[\chi]J[\boldsymbol{1}]\subseteq J[\chi]$
which proves that $J[\chi]=R[\chi]J[\boldsymbol{1}]$ for all $\chi$.
Thus $J=R(J\cap R^{\Z^n})$. This shows that $\varepsilon\rho$ is the identity.
Let $\nf$ be an ideal of $R^{\Z^n}$.
Then $R\nf = \sum_{\chi\in\K[\Z^n]^\ast} R[\chi]\nf $ and $R[\chi]\nf\subseteq R[\chi]$.
Thus $(R\nf)\cap R^{\Z^n}=\nf$. This proves $\rho\varepsilon$ is the identity.


\end{proof}

We end this section with some lemmas that will be used in the next section
to prove the second main theorem of the paper.
\begin{lem}\label{lem:Jid}
 $Rt_i+R\si_i^d(t_i)=R$ for all $i=1,\ldots,n$ and all $d\in\Z_{>0}$.
\end{lem}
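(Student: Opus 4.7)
The plan is to reduce the claim to a direct calculation inside the two-variable Laurent subring $\K[u_i^{\pm 1}, v_i^{\pm 1}]$, using the fact that $t_i$ and $\si_i^d(t_i)$ live there and involve only the monomials $u_i^k$ and $v_i^k$. Once I extract $u_i^k$ (or $v_i^k$) from the ideal $Rt_i + R\si_i^d(t_i)$, invertibility in the Laurent polynomial ring $R$ finishes the argument immediately.

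First I will compute $\si_i^d(t_i)$ explicitly. Since $\si_i(u_i) = r_{ii}^{-1}u_i$ and $\si_i(v_i) = s_{ii}^{-1}v_i$, iterating gives
\begin{equation*}
(r_{ii}^k-s_{ii}^k)\,t_i = r_{ii}^k u_i^k - s_{ii}^k v_i^k, \qquad
(r_{ii}^k-s_{ii}^k)\,\si_i^d(t_i) = r_{ii}^{k-dk}u_i^k - s_{ii}^{k-dk}v_i^k.
\end{equation*}
Thus, treating $u_i^k$ and $v_i^k$ as two unknowns, $t_i$ and $\si_i^d(t_i)$ are obtained by applying the $2\times 2$ matrix
\begin{equation*}
M = \begin{pmatrix} r_{ii}^k & -s_{ii}^k \\ r_{ii}^{k-dk} & -s_{ii}^{k-dk} \end{pmatrix}.
\end{equation*}

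Next I will compute $\det M = -r_{ii}^k s_{ii}^{k-dk} + s_{ii}^k r_{ii}^{k-dk} = (r_{ii}s_{ii})^{k-dk}\bigl(s_{ii}^{dk} - r_{ii}^{dk}\bigr)$. Because $r_{ii}/s_{ii}$ is not a root of unity (assumption \eqref{eq:rscond1}) and $dk \neq 0$, the factor $s_{ii}^{dk} - r_{ii}^{dk}$ is a nonzero scalar in $\K$, so $\det M \in \K^\times$. Solving for $u_i^k$ and $v_i^k$ by Cramer's rule expresses each of them as a $\K$-linear combination of $t_i$ and $\si_i^d(t_i)$; in particular $u_i^k \in Rt_i + R\si_i^d(t_i)$.

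Finally, since $u_i$ is a unit of $R$ by construction, so is $u_i^k$, and therefore $1 \in R u_i^k \subseteq Rt_i + R\si_i^d(t_i)$, giving $Rt_i + R\si_i^d(t_i) = R$. There is essentially no obstacle here: the only substantive input is the non-root-of-unity hypothesis on $r_{ii}/s_{ii}$, which is exactly what makes $\det M \neq 0$, and the fact that the $u_i$ are inverted in $R$, which is exactly what collapses the resulting principal ideal to all of $R$.
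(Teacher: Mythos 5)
Your proof is correct and uses essentially the same idea as the paper: view $t_i$ and $\si_i^d(t_i)$ as $\K$-linear combinations of $u_i^k$ and $v_i^k$, eliminate one of the monomials using a $\K$-linear combination, and observe that the non-root-of-unity hypothesis on $r_{ii}/s_{ii}$ makes the remaining coefficient nonzero while invertibility of the Laurent generators finishes the argument. The paper simply writes down the specific combination $r_{ii}^{-dk}t_i - \si_i^d(t_i)$ that kills the $u_i^k$ term and leaves a unit, which is what your Cramer's rule computation produces after unwinding the $2\times 2$ determinant.
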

\begin{proof}
 We have
\begin{equation}
r_{ii}^{-dk}t_i - \si_i^d(t_i) =
 \frac{(-r_{ii}^{-dk}s_{ii}^k + s_{ii}^{k-dk})v_{ii}^k}{r_{ii}^k - s_{ii}^k}
\end{equation}
which is invertible since $r_{ii}/s_{ii}$ is assumed to not be a root of $1$
and since $v_i$ is invertible. This proves the claim.
\end{proof}

\begin{lem}\label{lem:products}
No product of elements of the form $\si_g(t_i)$ ($g\in\Z^n$, $i=1,\ldots,n$)
can belong to a $\Z^n$-invariant proper ideal of $R$.
\end{lem}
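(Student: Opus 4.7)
The plan is to reduce the statement to a single statement about maximal invariant ideals and then exploit the character-grading of $R$ induced by $\Z^n$.

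By Proposition \ref{prp:ideal_bijection}, every proper $\Z^n$-invariant ideal of $R$ is contained in one of the form $R\nf$ for some $\nf\in\Specm(R^{\Z^n})$, so it suffices to show that for each such $\nf$, no product $\prod_{\ell=1}^N \si_{g_\ell}(t_{i_\ell})$ lies in $R\nf$. The key tool will be the decomposition $R=\bigoplus_\chi R[\chi]$ from \eqref{eq:Rsemisimple}, which is strongly graded by the character group $\K[\Z^n]^\ast$. Since $R\nf$ is $\Z^n$-invariant, it is a $\K[\Z^n]$-submodule and hence $R\nf=\bigoplus_\chi R[\chi]\nf$. Consequently an element lies in $R\nf$ if and only if each of its $\chi$-components does.

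First I would compute $\si_g(t_i)$ directly from \eqref{eq:mtwa_ti_def} and \eqref{eq:laurentsigmas}, obtaining $\si_g(t_i)=A_{g,i}u_i^k - B_{g,i}v_i^k$ up to the nonzero scalar $(r_{ii}^k-s_{ii}^k)^{-1}$, with $A_{g,i},B_{g,i}\in\K^\times$. Expanding a product $a=\prod_{\ell=1}^N \si_{g_\ell}(t_{i_\ell})$ then gives a sum
\[
a = c\cdot\sum_{S\subseteq\{1,\ldots,N\}}\Big(\prod_{\ell\in S}A_{g_\ell,i_\ell}\Big)\Big(\prod_{\ell\notin S}-B_{g_\ell,i_\ell}\Big)\prod_{\ell\in S}u_{i_\ell}^k\prod_{\ell\notin S}v_{i_\ell}^k,
\]
where $c\in\K^\times$, and each summand is a unit in $R$ with a nonzero scalar coefficient.

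The technical heart of the argument is to locate a $\chi$-component of $a$ that is a single nonzero unit, because a unit cannot belong to the proper ideal $R\nf$. For this I would isolate the ``top'' term $S=\{1,\ldots,N\}$, which contributes the monomial $\prod_\ell u_{i_\ell}^k$. Using $r_{ji}^k=s_{ji}^k$ for $i\neq j$, the character $\chi_S$ of the $S$-summand depends on $S$ only through the integers $a_j(S)=|\{\ell:i_\ell=j,\ \ell\in S\}|$, and in fact $\chi_S(e_j) = r_{jj}^{-ka_j(S)}s_{jj}^{-k(n_j-a_j(S))}\cdot C_j$ for some $S$-independent $C_j$, where $n_j=|\{\ell:i_\ell=j\}|$. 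The hypothesis \eqref{eq:rscond1} that $r_{jj}/s_{jj}$ is not a root of unity forces $a_j(S)=n_j$ for all $j$ whenever $\chi_S=\chi_{\{1,\ldots,N\}}$, hence $S=\{1,\ldots,N\}$. So this character is achieved by a single summand with coefficient $c\prod_\ell A_{g_\ell,i_\ell}\in\K^\times$, giving a $\chi$-component of $a$ that is a unit of $R$. This component cannot lie in $R\nf$, so neither can $a$.

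The main obstacle is this last combinatorial/character-theoretic step: several subsets $S$ in general give the same monomial in $R$, so one must check that at least one extremal character class consists of a single subset and that its coefficient does not accidentally vanish. Both are taken care of by the ``not a root of unity'' condition \eqref{eq:rscond1}, which makes the assignment $S\mapsto\chi_S$ injective on the ``all-$u$'' class, and by the fact that the extremal coefficient is a product of the nonzero scalars $A_{g_\ell,i_\ell}$.
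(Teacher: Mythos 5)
Your proof is correct, but it takes a genuinely different route from the paper. The paper first observes that each $\si_g(t_i)$ is a scalar multiple of $\si_i^{g_i}(t_i)$ (because $\si_j(t_i)=\ga_{ji}t_i$ for $j\neq i$, the Lie type $(A_1)^n$ property), writes the product as a scalar times $\si_1^{p_1}(t_1)\cdots\si_n^{p_n}(t_n)$, and then eliminates the $t_i$-factors one at a time using the comaximality $Rt_i+R\si_i^d(t_i)=R$ of Lemma~\ref{lem:Jid}: if $L$ is a proper invariant ideal containing $a$, then it also contains $\si_1(a)$, and a suitable $R$-combination of $a$ and $\si_1(a)$ removes the $\si_1^{p_1}(t_1)$ factor; iterating yields a nonzero scalar in $L$, a contradiction. (Strictly speaking a general product may contain several factors of the same $t_i$, so this elimination has to be iterated more times than the paper's phrasing suggests, but the mechanism is the same.) Your approach instead avoids Lemma~\ref{lem:Jid} entirely and works directly in the character decomposition $R=\bigoplus_\chi R[\chi]$ used in the proof of Proposition~\ref{prp:ideal_bijection}: you expand the product over subsets $S$, observe that the ``all-$u$'' term is the unique summand whose character equals $\chi_{\{1,\ldots,N\}}$ (this is precisely where the non-root-of-unity hypothesis \eqref{eq:rscond1} enters, combined with $r_{ij}^k=s_{ij}^k$ for $i\neq j$), and conclude that this component of $a$ is a unit, so $a$ cannot lie in a proper $\chi$-graded ideal. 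Your argument is more explicit and computational, and handles repeated factors of the same $t_i$ with no special care, while the paper's argument is shorter once Lemma~\ref{lem:Jid} is in hand and reuses a lemma needed elsewhere in the simplicity criterion. Both are valid; one small stylistic note is that your reduction to $J=R\nf$ via Proposition~\ref{prp:ideal_bijection} is harmless but unnecessary, since the character-grading argument applies verbatim to any $\Z^n$-invariant proper ideal $J=\bigoplus_\chi(J\cap R[\chi])$.
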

\begin{proof}
Indeed, such a product can be written $a=\xi \si_1^{p_1}(t_1)\cdots \si_n^{p_n}(t_n)$
for some nonzero $\xi\in\K$ and some $p_i\in\Z$. But then the proper
$\Z^n$-invariant ideal $L$ containing such element would also contain
$\si_1(a)$. By Lemma \ref{lem:Jid}, $Rt_1+R\si_1(t_1)=R$.
So for suitable $r_1,r_2\in R$,
$r_1a+r_2\si_1(a)=\xi' \si_2^{p_2}(t_2)\cdots\si_n^{p_n}(t_n)$ for some nonzero $\xi'\in\K$.
Continuing this way we would obtain that $L$ contains a nonzero scalar hence the $L=R$
contradicting that $L$ was proper. 
\end{proof}

\begin{lem}\label{lem:regularity}
Assume $J$ is a maximal $\Z^n$-invariant ideal of $R$.
Then all $t_i+J$ ($i=1,\ldots,n$) are regular in $R/J$.
\end{lem}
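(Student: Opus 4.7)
The plan is to argue by contradiction, exploiting the maximality of $J$ among $\Z^n$-invariant proper ideals together with Lemma \ref{lem:products}. Since $R$ is the commutative Laurent polynomial ring, $R/J$ is commutative, so left and right regularity coincide; it suffices to show that $t_i+J$ is not a zero divisor. So I would suppose otherwise: fix an index $i$ and an element $a\in R\setminus J$ with $at_i\in J$.

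Next I would enlarge $J$ by the $\Z^n$-orbit of $a$, forming
\[
J' \;=\; J \;+\; \sum_{g\in\Z^n} R\,\si_g(a).
\]
This ideal is $\Z^n$-invariant by construction and strictly contains $J$, since $a\in J'\setminus J$. Maximality of $J$ among $\Z^n$-invariant proper ideals (guaranteed by Proposition \ref{prp:ideal_bijection}) therefore forces $J'=R$. Unwinding, I get a finite subset $S\subseteq\Z^n$, an element $j\in J$, and coefficients $r_g\in R$ with $1 = j + \sum_{g\in S} r_g\,\si_g(a)$.

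The key step is to multiply this identity by the matched product $T=\prod_{g\in S}\si_g(t_i)$. For each $g\in S$, the $\Z^n$-invariance of $J$ gives $\si_g(a)\si_g(t_i)=\si_g(at_i)\in J$; hence the term $r_g\si_g(a)\,T$ lies in $J$ because it already contains the factor $\si_g(a)\si_g(t_i)$. The term $jT$ lies in $J$ trivially. Adding up, $T\in J$. But $T$ is a product of elements of the form $\si_g(t_i)$, and $J$ is a proper $\Z^n$-invariant ideal, so this contradicts Lemma \ref{lem:products}.

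I do not anticipate a real obstacle here; the only small trick is to use exactly the index set $S$ appearing in the Bezout-type identity to form the product $T$, so that every mixed term on the right-hand side is absorbed into $J$ and the bare product $T$ is left to contradict Lemma \ref{lem:products}.
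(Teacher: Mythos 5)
Your proof is correct, and it uses exactly the same two key ingredients as the paper: Lemma \ref{lem:products} (that no product of the $\si_g(t_i)$ lands in a proper $\Z^n$-invariant ideal) and the maximality of $J$ among $\Z^n$-invariant ideals (equivalently, $\Z^n$-simplicity of $R/J$). The packaging is a bit different, though. The paper works entirely in $R/J$: it forms the set $L$ of all $\bar r\in R/J$ annihilated by some element of the multiplicative monoid generated by the $\si_g(t_i+J)$, observes that $L$ is a $\Z^n$-invariant ideal, notes $L\neq R/J$ since $1\notin L$ (because $0$ is not in the monoid by Lemma \ref{lem:products}), and concludes $L=0$ by $\Z^n$-simplicity. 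You instead argue by contradiction inside $R$: you enlarge $J$ by the $\Z^n$-orbit of a putative zero divisor $a$, use maximality to get a Bezout-type identity $1=j+\sum_{g\in S} r_g\si_g(a)$, and then multiply by the tailored product $T=\prod_{g\in S}\si_g(t_i)$ to force $T\in J$, contradicting Lemma \ref{lem:products}. Both are correct; the paper's version avoids the finite-support unwinding and handles all $i$ simultaneously in one ideal $L$, while yours makes more explicit exactly which product of the $\si_g(t_i)$ gets trapped in $J$. There is no gap in your argument.
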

\begin{proof}
Let $T$ denote the multiplicative submonoid of $R/J$
generated by $\si_g(t_i+J)$ for all $g\in\Z^n$ and $i=1,\ldots,n$.
By Lemma \ref{lem:products}, $0\notin T$.
Observe that the set
\[L=\{\bar r\in R/J \mid u \text{$\bar r=0$ for some $u\in T$} \}\]
is a $\Z^n$-invariant ideal in $R/J$. But $L=R/J$ is impossible since
the ring $R/J$ is unital and $0\notin T$. Therefore, since
$R/J$ is $\Z^n$-simple, we have $L=0$ which proves that
in particular all $t_i+J$ ($i=1,\ldots,n$) are regular in $R/J$.
\end{proof}

\subsection{Simple quotients}\label{sec:simplequotients}
We come now to the main result  on the structure theory
of multiparameter twisted Weyl algebras. The following theorem (which is Theorem~B from the Introduction) describes
all quotients $A/Q$ of $A=\MTWA{n}{k}{r}{s}{\La}$ such that $A/Q$ is a
simple ring and such that the images of $X_i, Y_i$ in $A/Q$ are regular
for all $i$. It also gives a necessary and sufficient condition under
which all such quotients are domains.

We would like to emphasize that the subring $R^{\Z^n}$ of invariants
of $R$ under $\Z$ is just a Laurent polynomial ring over the field $\K$.
Thus there are plenty of explicitly known maximal ideals. Moreover, when
$\K$ is algebraically closed there is a bijection $\Specm(R^{\Z^n})\to (\K\backslash\{0\})^m$
where $m$ is the number of variables in $R^{\Z^n}$, i.e. the rank of the
subgroup $G\subseteq \Z^{2n}$ (see \eqref{eq:Gdef}). It is in this sense
we view the following theorem as a parametrization of the stated family of
simple quotients.

\begin{thm} \label{thm:family_of_simples}
Let $A=\MTWA{n}{k}{r}{s}{\La}$ be a multiparameter twisted Weyl algebra.
\begin{enumerate}[{\rm (a)}]
\item
The assignment
\begin{equation}
 \nf\mapsto A/ \langle \nf \rangle
\end{equation}
where $\langle \nf\rangle$ denotes the ideal in $A$ generated by $\nf$,
is a bijection between the set of maximal ideals in $R^{\Z^n}$
and the set of simple quotients of $A$ in which all $X_i,\, Y_i\,(i=1,\ldots,n)$ are regular.
 
\item For any $\nf\in\Specm(R^{\Z^n})$, the quotient $A/\langle \nf \rangle$ is isomorphic
to the twisted generalized Weyl algebra
 $\TGWA{R/R\nf}{\bar \si}{\bar t}{\mu}$,
 where $\bar\si_g(r+R\nf)=\si_g(r)+R\nf\,\forall g\in\Z^n,r\in R$
 and $\bar t_i=t_i+R\nf\,\forall i$.

\item
$A/\langle \nf \rangle$ is a domain for all $\nf\in\Specm(R^{\Z^n})$ if and only if
$\Z^{2n} / G$ is torsion-free, where $G$ was defined in \eqref{eq:Gdef}.
\end{enumerate}
\end{thm}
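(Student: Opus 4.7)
My plan is to prove (b) first, since it supplies the identification used by both (a) and (c). The first step is to verify that the triple $(\bar R,\bar\si,\bar t)$ with $\bar R:=R/R\nf$ is regular and $\mu$-consistent: $\bar t_i$ is regular in $\bar R$ by Lemma \ref{lem:regularity} (applicable since $R\nf$ is maximal among $\Z^n$-invariant ideals of $R$ by Proposition \ref{prp:ideal_bijection}), and the consistency relations \eqref{eq:muconsistency1} and \eqref{eq:muconsistency2} pass from $R$ to the quotient. Since the spaces $V_{ij}$ and the Lie type $(A_1)^n$ identities $\bar\si_i(\bar t_j)=\ga_{ij}\bar t_j$ carry over, $\TGWA{\bar R}{\bar\si}{\bar t}{\mu}$ satisfies the hypotheses of Theorem \ref{thm:A1n_pres} and admits a presentation by the same generators and relations as $A$, but with $R$ replaced by $\bar R$. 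I would then construct mutually inverse homomorphisms between $A/\langle\nf\rangle$ and $\TGWA{\bar R}{\bar\si}{\bar t}{\mu}$ by mapping generators to generators and checking that the defining relations hold on each side.

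For (a), injectivity of $\nf\mapsto\langle\nf\rangle$ is immediate: since $\nf\subseteq R^{\Z^n}\subseteq Z(A)$, the ideal $\langle\nf\rangle$ equals $A\cdot\nf$, which is $\Z^n$-graded with degree-zero part $R\nf$, so Proposition \ref{prp:ideal_bijection} recovers $\nf$. For the forward direction (that $A/\langle\nf\rangle$ is simple with regular $X_i,Y_i$), I apply Theorem \ref{thm:oinert}(b) to $\bar A:=\TGWA{\bar R}{\bar\si}{\bar t}{\mu}\cong A/\langle\nf\rangle$: condition~(1) is Proposition \ref{prp:ideal_bijection}; condition~(2) follows from $\bar R$ being maximal commutative in $\bar A$, i.e. Proposition \ref{prp:maxcomm}(b); condition~(3) is Lemma \ref{lem:Jid} reduced mod $R\nf$. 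Regularity of $X_i,Y_i$ in $\bar A$ then follows from the standard PBW-type description of the graded components of a $\mu$-consistent TGW algebra with regular $t$'s, together with the regularity of $Y_iX_i=\bar t_i$ and $X_iY_i=\bar\si_i(\bar t_i)$ in $\bar R$.

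For the reverse direction of (a), let $A/Q$ be a simple quotient with $X_i,Y_i$ regular, and set $P:=Q\cap R^{\Z^n}$. Since $Q$ is graded (Proposition \ref{prp:ideals_are_graded}), $Q\cap R$ is a $\Z^n$-invariant ideal of $R$, equal to $RP$ by Proposition \ref{prp:ideal_bijection}. Regularity of $X_i,Y_i$ in $A/Q$ combined with $Y_iX_i=t_i$ forces $\bar t_i$ to be regular in $R/RP\hookrightarrow A/Q$, so by part (b) I identify $A/\langle P\rangle$ with $\TGWA{R/RP}{\bar\si}{\bar t}{\mu}$. The ideal $Q/\langle P\rangle$ of this algebra is graded with trivial intersection with its degree-zero component, and therefore vanishes by Lemma \ref{lem:easy}(c). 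Thus $Q=\langle P\rangle$ and $A/\langle P\rangle$ is simple; Theorem \ref{thm:oinert}(b) now forces $R/RP$ to be $\Z^n$-simple, which by Proposition \ref{prp:ideal_bijection} lifts to maximality of $P$.

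For (c), combining (b) with Proposition \ref{prp:domain} reduces the problem to asking when $R/R\nf$ is a domain. Using $R=\bigoplus_{d\in\Z^{2n}}\K u^d$ and $R^{\Z^n}=\bigoplus_{d\in G}\K u^d$, the quotient $R/R\nf$ is naturally a commutative (twisted) group algebra of $\Z^{2n}/G$ over the residue field $R^{\Z^n}/\nf$; when $\Z^{2n}/G$ is torsion-free this is a domain. For the converse, I take the augmentation $\nf_0:=(u^g-1\mid g\in G)$, which is a maximal ideal of $R^{\Z^n}\cong\K[G]$ and for which $R/R\nf_0\cong\K[\Z^{2n}/G]$; any torsion element of order $m\geq 2$ then produces a subring $\K[x]/(x^m-1)$ in which the factorization $(x-1)(x^{m-1}+\cdots+1)=0$ exhibits zero divisors, so $A/\langle\nf_0\rangle$ is not a domain. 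I expect the main obstacle to be the surjectivity step in (a): one must carefully orchestrate the order of deductions (regularity of $\bar t_i$ from regularity of $X_i,Y_i$; then $\mu$-consistency of $(\bar R,\bar\si,\bar t)$; then killing $Q/\langle P\rangle$ via Lemma \ref{lem:easy}(c); and only at the end deducing maximality of $P$ from Theorem \ref{thm:oinert}(b) applied to the now-known simple algebra $A/\langle P\rangle$).
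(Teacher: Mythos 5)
Your overall architecture (b first, then a, then c) and most of the ingredients match the paper's proof, but there is one concrete flaw in the reverse direction of (a). You assert that since $Q$ is graded (Proposition~\ref{prp:ideals_are_graded}), $Q\cap R$ is a $\Z^n$-invariant ideal of $R$. Gradedness of $Q$ does \emph{not} give this. What actually makes $Q\cap R$ invariant is precisely the standing hypothesis you then leave unused in that sentence, namely that $X_i+Q$ is regular in $A/Q$: for $p\in Q\cap R$ one has $\si_i(p)X_i=X_ip\in Q$, and regularity of $X_i+Q$ lets you cancel $X_i$ to conclude $\si_i(p)\in Q$. (Invariance under $\si_i^{-1}$ then follows from Noetherianity of $R$, or by the same argument with $Y_i$.) Without that cancellation step the claim is unjustified; gradedness only tells you that $\si_i(p)X_i$ lies in the $e_i$-component of $Q$.

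A second, smaller issue: you invoke ``part (b)'' to identify $A/\langle P\rangle$ with $\TGWA{R/RP}{\bar\si}{\bar t}{\mu}$ before you know $P$ is maximal, whereas (b) is stated only for maximal $\nf$. You do acknowledge this in your closing paragraph and describe the correct fix — derive regularity of $\bar t_i$ from regularity of $X_i,Y_i$ rather than from Lemma~\ref{lem:regularity}, then rerun the argument of (b) with that input — so I read this as a presentational circularity rather than a missing idea, but the write-up needs to make that explicit and not cite (b) as stated.

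Where you genuinely diverge from the paper, the differences are reasonable trade-offs. For (a) reverse, the paper first proves that $R\cap Q$ is maximal among $\Z^n$-invariant ideals (directly from maximality of $Q$), pulls it back to a maximal $\nf$, and then cites the forward direction to conclude $\langle\nf\rangle=Q$; you instead first force $Q=\langle P\rangle$ via Lemma~\ref{lem:easy}(\ref{it:zerointersection}) applied to the TGW algebra $A/\langle P\rangle$, and only afterwards deduce $\Z^n$-simplicity of $R/RP$ (hence maximality of $P$) from Theorem~\ref{thm:oinert}(\ref{it:tgwasimplicity2}). Both orderings work. For (c), your converse is cleaner than the paper's: choosing the augmentation ideal $\nf_0=(u^g-1\mid g\in G)$ so that $R/R\nf_0\simeq\K[\Z^{2n}/G]$ exhibits zero divisors from torsion directly, avoiding the paper's choice of a root of unity $\zeta$ and the primality argument on $u^{pd}-1$. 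Your forward (``if'') direction via the (twisted) group algebra of $\Z^{2n}/G$ over $R^{\Z^n}/\nf$ is essentially the paper's $\K[G]\otimes\K[G']$ splitting in different clothing.
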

\begin{proof}
We first prove part (b). Let $\nf\in\Specm(R^{\Z^n})$. Put $J=R\nf$.
Trivially $AJA=\langle \nf \rangle$.
By Lemma \ref{lem:regularity}, $t_i+J$ are regular in $R/J$.
For each $g\in\Z^n$, $A_g=RZ_1^{(g_1)}\cdots Z_n^{(g_n)}$, $Z_i^{(m)}=X_i^m$ if $m\ge 0$
and $Z_i^{(m)}=Y_i^{-m}$ if $m<0$. 
We know $(R,\si,t)$ is $\mu$-consistent (see Section \ref{sec:MTWA_def}).
Thus by \cite[Cor.~6.4]{FutHar2011}, 
 $(R/J,\bar\si,\bar t)$ is
also $\mu$-consistent.
 Thus the claim follows from \cite[Thm.~4.1]{FutHar2011}
using \cite[Rem.~4.2]{FutHar2011}.

Now we prove part (a). Let $\nf\in\Specm(R^{\Z^n})$. By part (b),
$A/\langle\nf\rangle$ isomorphic to $\bar A:=\TGWA{R/J}{\bar \si}{\bar t}{\mu}$.
By Proposition \ref{prp:ideal_bijection}, $J$ is maximal among $\Z^n$-invariant ideals
of $R$, hence $R/J$ is $\Z^n$-simple. 
By Proposition \ref{prp:maxcomm}(b), $R/J$ is maximal commutative in $\bar A$. Hence, in particular, $Z(\bar A)\subseteq R/J$.
Let $\pi:R\to R/J$ be the canonical
projection. We have
$(R/J)\bar t_i+(R/J)\bar\si_i^d(\bar t_i) = \pi(Rt_i+R\si_i^d(t_i)) + J =R/J$
for any $i\in\{1,\ldots,n\}$ and $d\in\Z_{>0}$.
Thus the requirements in Theorem \ref{thm:oinert}(\ref{it:tgwasimplicity2}) are fulfilled
and we conclude that $\bar A$ is simple.
For each $i\in\{1,\ldots,n\}$, the elements $\bar t_i$, hence also $\bar\si_i(\bar t_i)$,
are regular in $R/J$. By the proof of 
\cite[Thm.~5.2(a)]{FutHar2011}, 
 these
elements are also regular in $\bar A$. Since $\bar t_i=Y_iX_i$  and
$\bar\si_i(\bar t_i)=X_iY_i$, it follows that $X_i$ and $Y_i$ are regular in $\bar A$.

Conversely, assume that $Q$ is any nonzero ideal of $A$ such that $A/Q$ is simple
and such that $X_i+Q$, $Y_i+Q$ are regular in $A/Q$ for all $i$.
By Proposition \ref{prp:maxcomm}, $R$ is maximal commutative in $A$, that is $C_A(R)=R$.
So by Theorem \ref{thm:oinert}(\ref{it:intersection}), $R\cap Q\neq 0$.
We claim that $R\cap Q$ is $\Z^n$-invariant. Let $i\in\{1,\ldots,n\}$ and
$p\in R\cap Q$. Then $X_i p \in Q$ since $Q$ is an ideal.
On the other hand, $X_i p =\si_i(p)X_i$.
Since the image of $X_i$ in $A/Q$ not a zero-divisor we conclude that $\si_i(p)\in Q$.
Trivially $\si_i(p)\in R$. Thus $\si_i(R\cap Q)\subseteq R\cap Q$ for all $i$.
Analogously one proves that $\si_i^{-1}(R\cap Q)\subseteq R\cap Q$ (or one can use that
$R$ is Noetherian). So $R\cap Q$ is indeed $\Z^n$-invariant.
Next we show that $R\cap Q$ is maximal among $\Z^n$-invariant ideals in $R$.
Suppose $R\cap Q\subsetneq J\subseteq R$ where $J$ is a $\Z^n$-invariant ideal of $R$.
Since $J$ is $\Z^n$-invariant, $AJ$ is a two-sided ideal of $A$.
Any element of $AJ + Q$ of degree zero has the form $p + a$ where $p\in J$ and 
$a$ is the degree zero component of an element of $Q$. But $Q$ is graded
by Proposition \ref{prp:ideals_are_graded} so $a\in Q$. Thus $(AJ+Q)\cap R=J+(Q\cap R)=J$.
Thus $AJ+Q$ is an ideal of $A$ which properly contains $Q$. Since $Q$ was 
maximal, $AJ+Q=A$ and thus $J=(AJ+Q)\cap R=R$. This shows that $R\cap Q$
is maximal among all $\Z^n$-invarant ideals of $R$.
By Proposition \ref{prp:ideal_bijection}, we conclude that $R\cap Q$ equals
$R\nf$ for some maximal ideal $\nf$ of $R^{\Z^n}$.
So for this $\nf$ we have $\langle \nf\rangle \subseteq Q$. But we proved
above that $A/\langle \nf\rangle$ is always simple. Thus $\langle\nf\rangle$
is a maximal ideal of $A$ which implies that $\langle\nf\rangle=Q$.

Finally,
two different ideals $\nf, \nf'$ in $R^{\Z^n}$
cannot generate the same maximal ideal $L$ in $A$,
since then $1\in \nf+\nf'\subseteq L$ which is absurd.

(c) By Proposition \ref{prp:domain} and part (b) we have that $A/\langle\nf\rangle$ is a domain iff
$R\nf$ is a prime ideal of $R$.
 Assume $R\nf$ is prime for all $\nf\in\Specm(R^{\Z^n})$.
Suppose $d\in \Z^{2n}$, $d\notin G$ but that there is a $p\in \Z_{>0}$ such that $pd\in G$.
Without loss of generality we can assume $p$ is prime.
Then there is a $j\in\{1,\ldots,n\}$ such that $\si_j(u^d)=\zeta u^d$ where $\zeta\in\K$, $\zeta\neq 1$, $\zeta^p=1$.
 Pick any $\Z$-basis
$\{d_1,\ldots, d_N\}$ for $G$ and take $\nf$ to be the maximal ideal in $R^{\Z^n}$ generated
by $u^{d_i}-1$ for $i=1,\ldots,N$. Then $u^{pd}-1\in \nf$ also, because $pd$ is a $\Z$-linear
combination of the $d_i$. But $u^{pd}-1=(u^d-1)(u^d-\zeta)\cdots (u^d-\zeta^{p-1})$.
Since $R\nf$ is prime we conclude that $u^d-\zeta^e\in R\nf$ for some $e\in\{0,\ldots,p-1\}$.
 However $R\nf$ is $\Z^n$-invariant and thus
$R\nf\ni u^d-\zeta^e - \ze^{-1}\si_j(u^d-\zeta^e) =(\ze^{-1}-1)\zeta^e$ which is invertible.
This contradicts that $R\nf$ is a proper ideal of $R$ which we know by
Proposition \ref{prp:ideal_bijection}.
Hence $\Z^{2n}/G$ is torsion-free.

Conversely, assume that $\Z^{2n}/G$ is torsion-free. Thus $\Z^{2n}\simeq G\oplus G'$ for
some subgroup $G'$ of $\Z^{2n}$. 
Therefore, viewing $R$ as the group algebra $\K[\Z^{2n}]$, we have an isomorphism
$R=\K[\Z^{2n}]\simeq \K[G]\otimes_\K \K[G']$. Under this isomorphism, $R \nf$
(where $\nf\in \Specm(R^{\Z^n})$ is arbitrary)
 is mapped to $\nf\otimes \K[G']$
which is a prime ideal in $\K[G]\otimes_\K \K[G']$ since
\[\frac{\K[G]\otimes_\K \K[G']}{\nf\otimes \K[G']} \simeq (R^{\Z^n}/\nf)[G'],\]
which  is a Laurent polynomial algebra over a field. This proves that $R\nf$ is a prime ideal
of $R$ for any $\nf\in \Specm(R^{\Z^n})$.
\end{proof}

\section{Multiparameter Weyl algebras and Hayashi's $q$-analog of the Weyl algebras} \label{sec:mwa}
In this section we consider a class of \emph{multiparameter Weyl
algebras} defined in \cite{Be}, which is a
 particular case of twisted multiparameter Weyl algebras. For the convenience of the reader we include 
 the definition.

\subsection{Definition}
 Assume $\un r = (r_1,\dots,r_n)$ and $\un s =
(s_1,\dots,s_n)$ are $n$-tuples of nonzero scalars in a field $\K$
such that $(r_is_i^{-1})^2 \neq 1$ for each $i$. Let $A_{\un r,\un
s}(n)$ be the unital associative algebra over the field $\K$
generated by elements $\rho_i, \rho_i^{-1}, \sigma_i,
\sigma_i^{-1}, x_i$, $y_i, \ i=1,\dots, n$, subject to the
following relations:
\begin{itemize}
\item[(R1)]
 The $\rho_i^{\pm 1}, \ \sigma_j^{\pm 1}$ all commute with one
another and  $\rho_i\rho_i^{-1}=\sigma_i\sigma_i^{-1}=1;$
\smallskip

\item[(R2)] $\rho_ix_j =r_i^{\delta_{i,j}}x_j \rho_i \qquad \rho_i
y_j = r_i^{-\delta_{i,j}}y_j \rho_i \qquad    \quad 1 \leq i,j
\leq n;$
\smallskip

\item[(R3)] $\sigma_ix_j =s_i^{\delta_{i,j}}x_j \sigma_i \qquad
\sigma_i y_j = s_i^{-\delta_{i,j}}y_j \sigma_i \qquad
   \quad 1 \leq i,j \leq n;$
\smallskip

\item[(R4)]  $x_ix_j = x_jx_i, \qquad  y_iy_j = y_jy_i,  \qquad 1
\leq i,j \leq n; $

\noindent   $y_ix_j =  x_j y_i, \qquad  1 \leq i \neq j \leq n$;
\smallskip

\item[(R5)] $y_ix_i - r_i^2 x_iy_i = \sigma_i^2$ \  and \ $y_ix_i
- s_i^2 x_iy_i = \rho_i^2$,   \qquad $1 \leq i \leq n$,

\noindent or equivalently
\smallskip

\item[(R5')] $\displaystyle{y_ix_i =  \frac{ r_i^2\rho_i^2 -s_i^2
\sigma_i^2}{r_i^2 -s_i^2}}$ \  and \ $\displaystyle{x_iy_i =
\frac{\rho_i^2 -\sigma_i^2}{r_i^2 -s_i^2} \qquad 1 \leq i \leq
n.}$
\end{itemize} \smallskip

When $r_i = q^{-1}$ and $s_i = q$ for all $i$, we may quotient by
the ideal generated by the elements  $\sigma_i \rho_i -1$,
$i=1,\dots,n$, to obtain Hayashi's $q$-analogs of the Weyl
algebras  $A_q^{-}(n)$ (see \cite{H90}).
\smallskip






\subsection{Realization as multiparameter twisted Weyl algebras}

Take $k=2$, and for all $i,j$ put $\la_{ij}=1$,
$r_{ij}=r_i^{\delta_{ij}}$, $s_{ij}=s_i^{\delta_{ij}}$,
where $r_i, s_i\in\K\backslash\{0\}$, $i=1,\ldots,n$.
Then $\MTWA{n}{k}{r}{s}{\La}$ is isomorphic to $A_{\underline{r},\underline{s}}(n)$.

Let us investigate the ring of invariants $R^{\Z^n}$. Consider a monomial
 $$u^d:=u_1^{d_1}\cdots u_n^{d_n} v_1^{d_{n+1}}\cdots v_n^{d_{2n}},$$ where $d\in\Z^{2n}$.
We have
\begin{equation}\label{eq:example1}
\si_i(u^d) = r_i^{d_i} s_i^{d_{n+i}} u^d.
\end{equation}

\subsection{Generic case}\label{sec-generic case}
Assuming that for each $i=1,\ldots,n$, the only pair $(d,d')\in\Z^2$ such that $r_i^d s_i^{d'}=1$
is the pair $(0,0)$ we
obtain that $R^{\Z^n}=\K$ and thus, by Theorem \ref{thm:family_of_simples}, 
$A_{\underline{r},\underline{s}}(n)$ is a simple ring.

\subsection{Hayashi's $q$-analogs of the Weyl algebras $A_q^{-}(n)$}
Assume instead that for all $i$, $r_i=q^{-1}$ and $s_i=q$, where $q\in\K$ is nonzero
and not a root of $1$. Then by \eqref{eq:example1},
$u^d$ is fixed by all $\si_i$ iff $d_i=d_{n+i}$ for all $i$. Thus $R^{\Z^n}=\K[w_1,\ldots,w_n]$
where $w_i:=u_iv_i$. Pick the maximal ideal $\nf:=(w_1-1,\ldots,w_n-1)$ of the
invariant subring. Then, by Theorem \ref{thm:family_of_simples}, we obtain that the quotient of $A_{\underline{r},\underline{s}}(n)$ by the two-sided ideal generated by $w_1-1, \ldots, w_n-1$
is a twisted generalized Weyl algebra which is simple. It is easy to check that this simple algebra is isomorphic to Hayashi's $q$-analogs of the Weyl algebras $A_q^{-}(n)$, see \cite{H90}.

\subsection{Connections with generalized Weyl algebras}
\label{sec:GWA}
Assume now that we are in the generic case as in subsection~\ref{sec-generic case}.
As it was observed in \cite{Be}, 
the multiparameter Weyl algebra $A_{\un r,\un s}(n)$ can be
realized as a degree $n$ generalized Weyl algebra.   For this
construction, let $D_i$ be the subalgebra of $A_{\un r,\un s}(n)$
generated by the elements $\rho_i,\rho_i^{-1},\sigma_i,
\sigma_i^{-1}$.  Thus, $D_i$ is isomorphic to $\K[\rho_i^{\pm
1},\sigma_i^{\pm 1}]$.   Set $D = D_1 \otimes D_2 \otimes \cdots
\otimes D_n$. Let  $\phi_i$ be the automorphism of $D_i$ given by

\begin{equation}\label{eq:phiidef} \phi_i(\rho_j) = r_i^{-\delta_{i,j}}\rho_j \qquad \qquad \phi_i(\sigma_i) = s_i^{-\delta_{i,j}}\sigma_i. \end{equation}

Now set
\begin{equation}\label{eq:tidef}t_i = \frac{r_i^2\rho_i^2 -s_i^2\sigma_i^2}{r_i^2 -s_i^2}, \qquad
X_i = x_i, \qquad Y_i = y_i, \end{equation} and observe that

$$Y_iX_i = t_i,  \qquad \text{and} \qquad  X_iY_i = \frac{ \rho_i^2 - \sigma_i^2}{r_i^2 -s_i^2} =
\phi_i(t_i)$$ \noindent  are just the relations in (R5)'.  The
relations in (R1) and (R4) are apparent. The identities in (R2)
and (R3) are equivalent to the statements  $Y_j d =
\phi_j^{-1}(d)Y_j, \ \ X_jd = \phi_j(d)X_j$ with $d = \rho_i$ and
$\sigma_i$.      Therefore, there is a surjection $W_n: = D(\un
\phi,\un t) \rightarrow A_{\un r,\un s}(n)$. But since $A_{\un
r,\un s}(n)$ has a presentation by (R1)-(R5), there is a
surjection $A_{\un r,\un s}(n) \rightarrow W_n$.   Since that map
is the inverse of the other one, these algebras are isomorphic.
Bavula \cite[Prop. 7]{B1}  has shown  that  a generalized Weyl
algebra  $D(\un \phi,\un t)$  is left and right Noetherian if $D$
is Noetherian, and it is a domain if $D$ is a domain. Since $D$ is
commutative and finitely generated, it is Noetherian,
 hence so are $W_n$ and $A_{\un r,\un
s}(n)$. 
Since $D$ is a domain as it can
be identified with the Laurent polynomial algebra $\K[\rho_i^{\pm
1}, \sigma_i^{\pm 1} \mid i =  1,\dots, n]$;  hence $A_{\un r,\un
s}(n)$ is a domain also.      In summary, we have

\begin{prp}\label{prop:GWA}\cite{Be}
 When the parameters $r_i,s_i$ are generic as in Section \ref{sec-generic case},  the  multiparameter Weyl algebra $A_{\un r,\un s}(n)$ is isomorphic
to the degree $n$ generalized Weyl algebra $W_n = D(\un \phi,\un
t)$ where $D$ is the $\K$-algebra generated by the elements
$\rho_i,\rho_i^{-1},\sigma_i, \sigma_i^{-1}$, $i=1,\dots,n$,
subject to the relations in (R1),  $\phi_i$ is as in
\eqref{eq:phiidef}; and the elements $t_i$ are as in
\eqref{eq:tidef}. Thus, $A_{\un r, \un s}(n)$ is Noetherian
domain.     
\end{prp}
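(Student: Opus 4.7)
The plan is to establish the isomorphism $A_{\un r,\un s}(n)\simeq W_n$ by constructing two mutually inverse surjective $\K$-algebra homomorphisms, and then invoke Bavula's theorem to conclude Noetherianity and the domain property.

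First, I would set up $D$ carefully: verify that the subalgebra of $A_{\un r,\un s}(n)$ generated by $\rho_i^{\pm 1},\si_i^{\pm 1}$ is, via relations (R1), a quotient of the Laurent polynomial algebra $\K[\rho_i^{\pm 1},\si_i^{\pm 1}\mid i=1,\ldots,n]$; the genericity of the parameters (Section \ref{sec-generic case}) prevents any collapse, so $D$ is genuinely Laurent polynomial. Next, confirm that each $\phi_i$ defined by \eqref{eq:phiidef} is a well-defined $\K$-algebra automorphism of $D$ and that the $\phi_i$ commute, and that the $t_i\in D$ given by \eqref{eq:tidef} are central (automatic since $D$ is commutative) and satisfy $\phi_i(t_i)=\frac{\rho_i^2-\si_i^2}{r_i^2-s_i^2}$ exactly as required.

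To produce the map $\Phi:W_n\to A_{\un r,\un s}(n)$, I would invoke the universal property of the generalized Weyl algebra: it suffices to check that the images $X_i\mapsto x_i$, $Y_i\mapsto y_i$, and $D\hookrightarrow A_{\un r,\un s}(n)$ satisfy the GWA relations $Y_iX_i=t_i$, $X_iY_i=\phi_i(t_i)$, $X_i d=\phi_i(d)X_i$, $Y_id=\phi_i^{-1}(d)Y_i$, and $[X_i,Y_j]=[X_i,X_j]=[Y_i,Y_j]=0$ for $i\ne j$. These are verbatim the content of (R1)--(R5') after the identifications. Conversely, to produce $\Psi:A_{\un r,\un s}(n)\to W_n$, use the presentation of $A_{\un r,\un s}(n)$ by generators and relations (R1)--(R5): all of those relations are satisfied by $x_i\mapsto X_i$, $y_i\mapsto Y_i$, and the natural generators of $D$ inside $W_n$, since the GWA relations translate back into (R1)--(R5). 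Since $\Phi$ and $\Psi$ agree on generators and are mutually inverse, we obtain the desired isomorphism.

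For the final claims, $D\cong\K[\rho_i^{\pm 1},\si_i^{\pm 1}]$ is a commutative Noetherian domain, so Bavula's result \cite[Prop.~7]{B1} applies directly to $W_n=D(\un\phi,\un t)$ yielding that $W_n$ is a Noetherian domain; transporting along the isomorphism of the first part gives the conclusion for $A_{\un r,\un s}(n)$. The most delicate point I anticipate is verifying that $D$ actually embeds into $A_{\un r,\un s}(n)$ as a Laurent polynomial ring in $2n$ variables (i.e., there are no hidden relations coming from (R2)--(R5) that collapse $D$); this uses the genericity hypothesis on $r_i,s_i$ and is essentially what makes the inverse map $\Psi$ well-defined. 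Once this is in hand, everything else is bookkeeping.
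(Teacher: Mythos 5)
Your proof mirrors the paper's argument step for step: build two mutually inverse surjections between $W_n=D(\un\phi,\un t)$ and $A_{\un r,\un s}(n)$ — one via the GWA universal property, the other via the presentation (R1)--(R5) — and then invoke Bavula's theorem \cite[Prop.~7]{B1} for the Noetherian domain conclusion. One small correction on where you locate the delicate point: the well-definedness of $\Psi:A_{\un r,\un s}(n)\to W_n$ requires only that (R1)--(R5) hold among $X_i,Y_i,\rho_i,\sigma_i$ inside $W_n$, where $D$ is a Laurent ring \emph{by construction}, so neither the injectivity of $D\to A_{\un r,\un s}(n)$ nor the genericity of $r_i,s_i$ enters; the fact that $\rho_i^{\pm1},\sigma_i^{\pm1}$ generate a genuine Laurent subring of $A_{\un r,\un s}(n)$ comes out as a corollary of the isomorphism (transported from $D\hookrightarrow W_n$, which holds because $D$ is a domain), not as a prerequisite for it.
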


\section{Jordan's simple localization of the multiparameter quantized Weyl algebra} \label{sec:jordan}
\subsection{Quantized Weyl algebras}
Let $\bar q=(q_1,\ldots,q_n)$ be an $n$-tuple of elements
of $\K\backslash\{0\}$.
Let $\Lambda=(\la_{ij})_{i,j=1}^n$ be an $n\times n$ matrix with
$\la_{ij}\in\K\backslash\{0\}$, multiplicatively skewsymmetric:
 $\la_{ij}\la_{ji}=1$ for all $i,j$.
The \emph{multiparameter quantized Weyl algebra of degree $n$ over $\K$}, denoted $A_n^{\bar q, \Lambda}(\K)$, is defined as the unital $\K$-algebra generated by
$x_i,y_i$, $1\le i\le n$ subject to the following defining relations:
\begin{alignat}{2}
y_iy_j&=\la_{ij}y_jy_i,    &&\qquad \forall i,j,\\
x_ix_j&=q_i\la_{ij}x_jx_i, &&\qquad i<j, \\
x_iy_j&=\la_{ji}y_jx_i,    &&\qquad i<j,\\
x_iy_j&=q_j\la_{ji}y_jx_i, &&\qquad i>j,\\
x_iy_i-q_iy_ix_i & = 1+\sum_{k=1}^{i-1}(q_k-1)y_kx_k, &&\qquad \forall i.
\end{alignat}
This algebra first appeared in \cite{M}, and was further studied
in \cite{AD} and \cite{J} among others.
For $\K=\C$ and $q_1=\cdots =q_n=\mu^2$,
$\la_{ji}=\mu\;\forall j<i$, where $\mu\in\K\backslash\{0\}$,
the algebra $A_n^{\bar q,\Lambda}(\K)$ is isomorphic to
the quantized Weyl algebra introduced by Pusz and Woronowicz \cite{PW}.

The quantized Weyl algebra can be realized as a twisted generalized Weyl algebra (first observed in \cite{MT02}) in the following way.
Let $P=\K[s_1,\ldots, s_n]$ be the polynomial algebra in
$n$ variables and $\tau_i$ the $\K$-algebra
automorphisms of $P$ defined by
\begin{equation}\label{eq:qweylsigmadef}
\tau_i(s_j)=
\begin{cases}
s_j, & j<i, \\
1+q_is_i+\sum_{k=1}^{i-1}(q_k-1)s_k, & j=i, \\
q_is_j, & j>i.
\end{cases}
\end{equation}
One can check that the $\tau_i$ commute.
Let $\mu=(\mu_{ij})_{i,j=1}^n$
be defined by 
\begin{equation}\label{eq:myn1}
\mu_{ij}= \begin{cases}
\la_{ji}, & i<j, \\
q_j\la_{ji},& i>j.
\end{cases}
\end{equation}
Put $\tau=(\tau_1,\ldots,\tau_n)$ and
$s=(s_1,\ldots,s_n)$. Let $\TGWA{P}{\tau}{s}{\mu}$ be the associated twisted generalized Weyl algebra.
From \eqref{eq:qweylsigmadef} it is easy to see that $\TGWA{P}{\tau}{s}{\mu}$ is $\K$-finitistic, and that the minimal polynomials are $p_{ij}(x)=x-1$ for $i<j$ and $p_{ij}(x)=x-q_i$ for $j>i$, so the algebra is of type $(A_1)^n$.
By Theorem \ref{thm:A1n_pres}, one checks that $\TGWA{P}{\tau}{s}{\mu}$ is isomorphic to $A_n^{\bar q, \Lambda}(\K)$ via $X_i\mapsto x_i$, $Y_i\mapsto y_i$ and $s_i\mapsto y_ix_i$.
The representation theory of $A_n^{\bar q, \Lambda}$ has been studied from the point of view of TGW algebras in \cite{MT02} and \cite{Ha06}.

In the following it will be convenient to identify $P$ with its isomorphic image in $A_n^{\bar q,\Lambda}$ via $s_i\mapsto y_ix_i$. 
Consider the following elements in $A_n^{\bar q,\Lambda}$:
\begin{equation}\label{eq:zidef}
z_i=1+\sum_{k\le i}(q_k-1)s_k,\qquad i=1,\ldots,n.
\end{equation}
It was shown in \cite{J} that the the set $Z:=\{z_1^{k_1}\cdots z_n^{k_n}\mid k_1,\ldots,k_n\in\Z\}$ is an Ore set in $A_n^{\bar q,\Lambda}$ and that,
provided that none of the $q_i$ is a root of unity,
the localized algebra 
\[B_n^{\bar q,\Lambda}:=Z^{-1}A_n^{\bar q,\Lambda}\]
is simple.

The algebra $B_n^{\bar q,\Lambda}$ can also be realized as a twisted generalized Weyl algebra. To see this, consider the following subset of $P$:
\begin{equation}\label{eq:locS}
S=\{\alpha z_1^{k_1}\cdots z_n^{k_n}\mid \alpha\in\K\backslash\{0\},
k_i\in\Z\}.
\end{equation}
where $z_i$ were defined in \eqref{eq:zidef}.
Then $0\notin S$, $1\in S$, $a,b\in S\Rightarrow ab\in S$, the elements
of $S$ are regular, and moreover $S$ has the virtue of being $\Z^n$-invariant, using the relation
\begin{equation}\label{eq:taizj}
\ta_i(z_j)=\begin{cases}
z_j,& j<i,\\
q_iz_j,& j\ge i,
\end{cases}\end{equation}
which can be proved using \eqref{eq:zidef} and \eqref{eq:qweylsigmadef}.
Thus
\cite[Thm.~5.2]{FutHar2011} 
 can be applied to give, together with the isomorphism
 $A_n^{\bar q,\Lambda}\simeq\TGWA{P}{\tau}{s}{\mu}$, that
\[S^{-1}A_n^{\bar q,\Lambda}\simeq S^{-1}\TGWA{P}{\tau}{s}{\mu}\simeq \TGWA{S^{-1}P}{\tilde\tau}{s}{\mu}.\]
But localizing at $S$ is equivalent to localizing at $Z$, and thus
\[B_n^{\bar q,\Lambda}\simeq \TGWA{S^{-1}P}{\tilde\tau}{s}{\mu}.\]

\subsection{Relation to multiparameter twisted Weyl algebras}
We show here how the algebra $B_n^{\bar q,\Lambda}$ fits into the framework of multiparameter twisted Weyl algebras. We keep all notation from previous section.
Let $\bar q=(q_1,\ldots,q_n)\in(\K\backslash\{0\})^n$ and let $\Lambda=(\la_{ij})_{i,j=1}^n$ be an $n\times n$ matrix with
$\la_{ij}\in\K\backslash\{0\}$, $\la_{ii}=1$, $\la_{ij}\la_{ji}=1$ for all $i,j$.
We assume that none of the $q_i$ is a root of unity.
Let $k=1$ and put
\begin{equation}\label{eq:qweyl_rs_def}
r_{ij}=\begin{cases}1,& j\le i\\ q_i^{-1},& j>i\end{cases}
\qquad\qquad
s_{ij}=\begin{cases}1,& j<i\\ q_i^{-1},&j\ge i\end{cases}
\end{equation}
Then conditions \eqref{eq:mtwa_conds} are satisfied.
Let $\MTWA{n}{k}{r}{s}{\La}$ be the corresponding multiparameter twisted Weyl algebra. Recall that, by definition, this means that 
$\MTWA{n}{k}{r}{s}{\La}$ is the twisted generalized Weyl algebra
$\TGWA{R}{\si}{t}{\mu}$ where
\begin{gather}
R=\K[u_1^{\pm 1},\ldots,u_n^{\pm 1}, v_1^{\pm 1},\ldots,v_n^{\pm 1}],\\
\si_i(u_j)=r_{ij}^{-1}u_j,\qquad \si_i(v_j)=s_{ij}^{-1}v_j,\\
t_i=\frac{u_i - q_i^{-1}v_i}{1-q_i^{-1}},\\
\mu_{ij}=r_{ji}^{-k}\la_{ji}, \label{eq:myn2}
\end{gather}
for all $i,j\in\{1,\ldots,n\}$.
Note that the $\mu_{ij}$ in \eqref{eq:myn2} coincides with
the ones defined in \eqref{eq:myn1}.
The goal now is to explain the following diagram, which proves Theorem C stated in the introduction.
\[
\xymatrix@C=0.5cm@M=1ex{
 (R,\si,t) \ar@{->>}[d]_{\pi} \ar@{->>}[drr]^{\psi}   &&              \\
 (R/J,\bar\si,\bar t)  \ar@<.5ex>@{->>}[rr]^{\Psi} &&
 (S^{-1}P,\tilde\tau,s)  \ar@<.5ex>@{->}[ll]^{\Phi}   \\
                          &\ar@{|->}[d]^*+{\Af{\mu}}& (P,\tau,s) \ar[ull]^{\varphi} \ar@{^{(}->}[u]_{\iota} \\
\MTWA{n}{k}{r}{s}{\Lambda} =  \TGWA{R}{\si}{t}{\mu} \ar@{->>}[d] \ar@{->>}[drr]
   &&                       \\
\frac{\MTWA{n}{k}{r}{s}{\Lambda}}{\langle J\rangle}\simeq \TGWA{R/J}{\bar\si}{\bar t}{\mu}  \ar@<.5ex>@{->}[rr]^{\simeq}
   && \TGWA{S^{-1}P}{\tilde\tau}{s}{\mu}\simeq B_n^{\bar q,\Lambda} \ar@<.5ex>@{->}[ll]   \\
                          && \TGWA{P}{\tau}{s}{\mu}\simeq A_n^{\bar q,\Lambda} \ar@{^{(}->}[ull] \ar@{^{(}->}[u] 
}
\]
\subsubsection{The map $\psi$}
Define a $\K$-algebra morphism
\[\psi:R\to S^{-1}P,\quad
 \psi(u_i)=-q_i^{-1}z_{i-1}, \quad \psi(v_i)=-z_i,  \quad i=1,\ldots,n\]
where $z_0:=1$.
We claim that $\psi$ is $\Z^n$-equivariant. Indeed,
\[\psi(\si_i(u_j))= \psi(r_{ij}^{-1}u_j)=-r_{ij}^{-1}q_i^{-1}z_{i-1},\]
while, using \eqref{eq:taizj} and \eqref{eq:qweyl_rs_def} in the last step,
\[\tilde\tau_i(\psi(u_j))=\tilde\tau_i (-q_j^{-1}z_{j-1})=-r_{ij}^{-1}q_i^{-1}z_{i-1}.\]
Similarly $\psi(\si_i(v_j))=\tilde\tau_i(\psi(v_j))$. This proves that $\psi\si_i=\tilde\tau_i\psi$ for each $i$, so in other words, that $\psi$ is $\Z^n$-equivariant. Also, for any $i\in\{1,\ldots,n\}$,
\[\psi(t_i)=\psi\big
(\frac{u_i - q_i^{-1}v_i}{1-q_i^{-1}}\big)=
\frac{-q_i^{-1}z_{i-1}+q_i^{-1}z_i}{1-q_i^{-1}}=\frac{z_i-z_{i-1}}{q_i-1}=
s_i
\]
by \eqref{eq:zidef}. We have proved that $\psi$ is a morphism in the category $\TGW{n}{\K}$ between $(R,\si,t)$ and $(S^{-1}P,\tilde\tau,s)$.
It is easy to see that $\psi$ is surjective because the image contains both $s_1,\ldots,s_n$ since $\psi(t_i)=s_i$, and the inverses of the $z_i$: $z_i^{-1}=\psi(-v_i^{-1})$.
Applying the functor $\mathcal{A}$ to $\psi$ gives a surjective $\K$-algebra
morphism $\Af{\mu}(\psi):\TGWA{R}{\si}{t}{\mu}\to\TGWA{S^{-1}P}{\tau}{s}{\mu}$.

\subsubsection{The map $\pi$}
We determine the invariant subring $R^{\Z^n}$. For any $i\in\{1,\ldots,n\}$ and $d\in\Z^{2n}$ we have
\[\si_i(u^d)= q_i^{-\sum_{j>i}d_{j} - \sum_{j\ge i}d_{n+j}}u^d\]
Thus $u^d\in R^{\Z^n}$ iff for each $i=1,\ldots,n$ we have
$d_{n+i} + \sum_{j=i+1}^n(d_{j}+d_{n+j})=0$. This system of equations is equivalent
to that $d_{2n}=0,\; d_{2n-1}+d_{n}=0,\; d_{2n-2}+d_{n-1}=0, \ldots,\; d_{n+1}+d_{2}=0$.
Thus $R^{\Z^n}=\K[w_1,\ldots,w_n]$ where $w_1=-u_1,\; w_2=u_2v_1^{-1}, \ldots,\; w_n=u_nv_{n-1}^{-1}$.
Pick \[\nf:=(w_1-q_1^{-1},\ldots, w_n-q_n^{-1})\in \Specm(R^{\Z^n}).\]
Let $J=R\nf$ be the ideal in $R$ generated by $\nf$.
The canonical map $\pi:R\to R/J$ is $\Z^n$-equivariant and maps $t_i$ to $\bar t_i=t_i+J$. 

\subsubsection{The map $\Psi$}
We have $\psi(w_i)=q_i^{-1}$ for $i=1,\ldots,n$ which shows that $J=R\nf\subseteq\ker\psi$. Thus $\psi$ induces a map $\Psi:R/J\to S^{-1}P$,
also $\Z^n$-equivariant and $\Psi(\bar t_i)=s_i$. Since $\psi$ is surjective, so is $\Psi$. Applying the functor from
\cite[Thm.~3.1]{FutHar2011} 
 we get a surjective homomorphism $\mathcal{A}(\Psi):\TGWA{R/J}{\si}{t}{\mu}\to \TGWA{S^{-1}P}{\tilde\tau}{s}{\mu}$. However, by Theorem \ref{thm:family_of_simples}, the algebra $\TGWA{R/J}{\si}{t}{\mu}$ is simple, and thus $\Psi$ is an isomorphism.
 
\subsubsection{The maps $\varphi, \iota, \Phi$}
Similarly one can show that the map $\varphi:P\to R/J$ defined
by $\varphi(s_i)=\bar t_j$ is $\Z^n$-equivariant and that the elements
of $S$ are mapped to invertible elements of $R/J$, showing that
$\varphi$ factorizes through the canonical map $\iota:P\to S^{-1}P$,
 inducing a map $\Phi$. Applying the functor $\Af{\mu}$ gives
 corresponding homomorphisms of twisted generalized Weyl algebras.

\section{Simple weight modules}\label{sec:weight}

In this section we describe the simple weight modules over
the simple algebras $A=\MTWA{n}{k}{r}{s}{\La}/\langle J\rangle$
from Theorem \ref{thm:family_of_simples}.
We also assume that the ground field $\K$ is algebraically closed.
We will use notation from Section \ref{sec:MTWA_def}.

\subsection{Dynamics of orbits and their breaks}
The group $\Z^n$ acts on $R$ via the automorphisms $\si_i$. Explicitly,
$g(r)=(\si_1^{g_1}\cdots\si_n^{g_n})(r)$ for $g=(g_1,\ldots,g_n)
\in\Z^n$ and $r\in R$. Using this action and \eqref{eq:tgwarels1}
we have $a\cdot r=(\deg a)(r)\cdot a$ for any homogenous $a\in A$
and any $r\in R$.
The group $\Z^n$ also acts on $\Max(R)$, the set of maximal ideals of $R$.
Let $\Om$ denote the set of orbits of this action.
An element $\mf\in\Max(R)$ is called an \emph{$i$-break} if $t_i\in \mf$.
An orbit $\mathcal{O}\in\Om$ is called \emph{degenerate} if it contains
an $i$-break for some $i$.
A break $\mf$ in an orbit $\mathcal{O}$ is called \emph{maximal}
if $\mf$ is an $i$-break for all $i$ for which $\mathcal{O}$ contains an $i$-break.

\begin{prp}\label{prp:stabilizer}
Let $\mf\in\Specm(R)$. Then the stabilizer $\Stab(\mf)$
is trivial.
\end{prp}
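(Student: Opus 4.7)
The plan is to mimic the argument given for Proposition \ref{prp:maxcomm}(a), now applied to a single maximal ideal rather than an arbitrary $\Z^n$-invariant proper ideal. The key advantage in this section is that $\K$ has been assumed algebraically closed, so by the Nullstellensatz every maximal ideal of the Laurent polynomial ring $R=\K[u_1^{\pm 1},\ldots,u_n^{\pm 1},v_1^{\pm 1},\ldots,v_n^{\pm 1}]$ has the explicit form
\[
\mf = (u_1-\al_1,\ldots,u_n-\al_n,\, v_1-\be_1,\ldots,v_n-\be_n)
\]
for some $\al_j,\be_j\in\K^\times$. Equivalently, $R/\mf\simeq\K$ and the natural $\K$-algebra automorphism of $R/\mf$ induced by any $\si_g$ stabilising $\mf$ must be the identity.

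First I would take $g\in\Stab(\mf)$ and write out $\si_g(u_j)\equiv u_j\pmod \mf$ and $\si_g(v_j)\equiv v_j\pmod \mf$ using \eqref{eq:laurentsigmas}. Since $u_j,v_j$ are units (hence not in $\mf$), this reduces to the scalar identities
\[
r_{1j}^{-g_1}\cdots r_{nj}^{-g_n}=1,\qquad s_{1j}^{-g_1}\cdots s_{nj}^{-g_n}=1,\qquad j=1,\ldots,n.
\]
Next I would raise both identities to the $k$-th power and invoke the hypothesis \eqref{eq:rscond2} that $r_{ij}^k=s_{ij}^k$ for $i\neq j$, so that all off-diagonal contributions cancel when the two relations are divided. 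What remains is
\[
(r_{jj}/s_{jj})^{kg_j}=1,\qquad j=1,\ldots,n.
\]
By \eqref{eq:rscond1}, $r_{jj}/s_{jj}$ is not a root of unity, and since $k\neq 0$ this forces $g_j=0$ for every $j$, i.e.\ $g=0$.

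There is no real obstacle here; the only subtlety is keeping track of why one is forced to pass to $k$-th powers before the hypothesis \eqref{eq:rscond1} can be applied, exactly as in Proposition \ref{prp:maxcomm}(a). Once that is noted, the computation is a direct consequence of the explicit form of the automorphisms $\si_i$ on the generators of $R$, together with the standing assumptions \eqref{eq:mtwa_conds}.
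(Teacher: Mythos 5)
Your proposal is correct and is essentially the paper's own argument: write $\mf$ in coordinate form via the Nullstellensatz, read off the scalar identities $\prod_j r_{ji}^{g_j}=\prod_j s_{ji}^{g_j}=1$ from $\si_g(u_i),\si_g(v_i)\equiv u_i,v_i\pmod\mf$, raise to the $k$-th power, cancel off-diagonal factors using $r_{ij}^k=s_{ij}^k$, and invoke that $r_{ii}/s_{ii}$ is not a root of unity. Your observation that this reuses the mechanism of Proposition \ref{prp:maxcomm}(a) matches the intent of the paper's proof.
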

\begin{proof}
Write 
\[\mf=(\bar u_i-\al_i,\bar v_i-\be_i\mid i=1,\ldots,n),\]
where $\bar u_i=u_i+J, \bar v_i=v_i+J$ and $\al_i,\be_i\in\K\backslash\{0\}$.
Suppose $g\in\Stab(\mf)$. Then
\begin{gather}\label{eq:sigmf_calc}
\begin{aligned}
\si_g(\mf)&=(\si_g(\bar u_i)-\al_i, \si_g(\bar v_i)-\be_i \mid i=1,\ldots,n)=\\
&=\big((r_{1i}^{g_1}\cdots r_{ni}^{g_n})^{-1}\bar u_i -\al_i,
(s_{1i}^{g_1}\cdots s_{ni}^{g_n})^{-1}\bar v_i -\be_i \mid i=1,\ldots,n \big)
 \end{aligned}
 \end{gather}
Thus 
\[r_{1i}^{g_1}\cdots r_{ni}^{g_n}=
s_{1i}^{g_1}\cdots s_{ni}^{g_n}=1
\]
Raising all sides to the $k$:th power and using that
$r_{ij}^k=s_{ij}^k$ for all $i\neq j$, we obtain
that $r_{ii}^{kg_i}=s_{ii}^{kg_i}=1$ for all $i$
which, since $r_{ii}/s_{ii}$ is not a root of unity,
implies that $g_i=0$ for all $i$.
\end{proof}

\begin{prp}\label{prp:breakstructure}
Consider the maximal ideal
\[\mf=(u_1-\al_1,\ldots,u_n-\al_n,v_1-\be_1,\ldots,v_n-\be_n)\in\Specm(R).\]
Then, for all $i\in\{1,\ldots,n\}$ and all $g=(g_1,\ldots,g_n)\in\Z^n$,
\begin{equation}
t_i\in\si_g(\mf)\Longleftrightarrow
(\al_i/\be_i)^k=(s_{ii}/r_{ii})^{(g_i+1)k}.
\end{equation}
\end{prp}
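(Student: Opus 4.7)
The plan is to reduce the ideal-membership condition $t_i \in \si_g(\mf)$ to an equation in $\K$ by evaluating at the point of $\Specm(R)$ corresponding to $\si_g(\mf)$, and then simplify using the defining relation $r_{ij}^k = s_{ij}^k$ for $i\neq j$ from \eqref{eq:rscond2}.

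First, I would compute $\si_g(\mf)$ explicitly. Since $\si_g = \si_1^{g_1}\cdots\si_n^{g_n}$ and $\si_i(u_j)=r_{ij}^{-1}u_j$, $\si_i(v_j)=s_{ij}^{-1}v_j$, we get
\[
\si_g(u_j-\al_j) = \bigl(\textstyle\prod_l r_{lj}^{g_l}\bigr)^{-1} u_j - \al_j,
\]
and this generates the same ideal as $u_j-\al_j\prod_l r_{lj}^{g_l}$ (since the leading scalar is a unit). An analogous statement holds for the $v_j$ generators. Therefore
\[
\si_g(\mf) = \bigl(u_j - \al_j\textstyle\prod_l r_{lj}^{g_l},\; v_j - \be_j\prod_l s_{lj}^{g_l} \;\bigm|\; j=1,\ldots,n\bigr).
\]

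Next, since $\K$ is algebraically closed, $R/\si_g(\mf)\cong\K$ via the evaluation sending $u_j\mapsto \al_j\prod_l r_{lj}^{g_l}$ and $v_j\mapsto \be_j\prod_l s_{lj}^{g_l}$. Hence $t_i\in\si_g(\mf)$ if and only if the image of $t_i=\bigl((r_{ii}u_i)^k-(s_{ii}v_i)^k\bigr)/(r_{ii}^k-s_{ii}^k)$ under this evaluation vanishes, i.e.\ if and only if
\[
(r_{ii}\al_i)^k \textstyle\prod_l r_{li}^{kg_l} = (s_{ii}\be_i)^k \prod_l s_{li}^{kg_l}.
\]

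Finally I would invoke condition \eqref{eq:rscond2}: for $l\neq i$ we have $r_{li}^k = s_{li}^k$, so all factors with $l\neq i$ on the two sides cancel. What remains is $\al_i^k\, r_{ii}^{(g_i+1)k} = \be_i^k\, s_{ii}^{(g_i+1)k}$, which rearranges to $(\al_i/\be_i)^k = (s_{ii}/r_{ii})^{(g_i+1)k}$. This establishes the equivalence. There is no real obstacle here; the only subtlety is remembering to use the hypothesis $r_{ij}^k=s_{ij}^k$ for $i\neq j$ in order to make the cross-terms drop out so that the condition depends only on $g_i$ and the diagonal parameters $r_{ii},s_{ii}$.
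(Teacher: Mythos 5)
Your proof is correct and follows essentially the same route as the paper: compute $\si_g(\mf)$ on generators, reduce $t_i\in\si_g(\mf)$ to vanishing of $t_i$ under the evaluation $u_j\mapsto\al_j\prod_l r_{lj}^{g_l}$, $v_j\mapsto\be_j\prod_l s_{lj}^{g_l}$, and cancel the cross-terms via $r_{li}^k=s_{li}^k$ for $l\neq i$. (The appeal to algebraic closure is unnecessary -- $R/\si_g(\mf)\cong\K$ already because the generators are explicitly of the form $u_j-\gamma_j$ with $\gamma_j\in\K^\times$ -- but it is harmless.)
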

\begin{proof}
By the calculation \eqref{eq:sigmf_calc} and the definition
 \eqref{eq:mtwa_ti_def} of $t_i$
we have $t_i\in\si_g(\mf)$ iff 
\[\Big(\frac{r_{1i}^{g_1}\ldots r_{ni}^{g_n}\al_i}{s_{1i}^{g_1}\ldots s_{ni}^{g_n}\be_i}\Big)^k=(s_{ii}/r_{ii})^k\]
Using that $r_{ij}^k=s_{ij}^k$ for $i\neq j$ and simplifying, the
claim follows.
\end{proof}
\begin{cor}\label{cor:breakstructure}
If $t_i\in\mf$, then for $g=(g_1,\ldots,g_n)\in\Z^n$,
\[t_i\in\si_g(\mf)\Longleftrightarrow g_i=0.\]
\end{cor}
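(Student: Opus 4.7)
The plan is to deduce this corollary directly from Proposition \ref{prp:breakstructure}, which supplies an explicit numerical criterion for when $t_i$ lies in a translate $\si_g(\mf)$. Since every maximal ideal of $R=\K[u_i^{\pm 1},v_i^{\pm 1}]$ has the form displayed in the proposition (using that $\K$ is algebraically closed and that $R$ is a Laurent polynomial ring), we may write $\mf=(u_i-\al_i,v_i-\be_i\mid i=1,\ldots,n)$ with all $\al_i,\be_i\in\K\backslash\{0\}$.

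First I would apply Proposition \ref{prp:breakstructure} with $g=0$ to the hypothesis $t_i\in\mf$. This yields the single scalar identity
\[
(\al_i/\be_i)^k=(s_{ii}/r_{ii})^{k}.
\]
Next I would apply Proposition \ref{prp:breakstructure} again, now with a general $g\in\Z^n$, to rewrite $t_i\in\si_g(\mf)$ as
\[
(\al_i/\be_i)^k=(s_{ii}/r_{ii})^{(g_i+1)k}.
\]
Comparing the two displayed identities, $t_i\in\si_g(\mf)$ is equivalent to $(s_{ii}/r_{ii})^{g_i k}=1$. Under hypothesis \eqref{eq:rscond1}, $r_{ii}/s_{ii}$ is not a root of unity, and $k\neq 0$ by the standing assumption on $k$, so this forces $g_i=0$. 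The converse direction is immediate: if $g_i=0$, the second identity reduces to the first, which is exactly the hypothesis $t_i\in\mf$.

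There is essentially no obstacle here; the only point that requires a moment of care is the justification that the scalar equation from Proposition \ref{prp:breakstructure} depends only on the component $g_i$ of $g$ (not on the other $g_j$'s). This is precisely where the relation $r_{ij}^k=s_{ij}^k$ for $i\neq j$ from \eqref{eq:rscond2}, already used inside the proof of Proposition \ref{prp:breakstructure}, is doing the work, so the corollary follows at once.
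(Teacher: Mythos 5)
Your argument is correct and is exactly the intended deduction: the paper states Corollary~\ref{cor:breakstructure} without a separate proof precisely because it falls out of Proposition~\ref{prp:breakstructure} by the comparison you make (setting $g=0$, then a general $g$, and invoking \eqref{eq:rscond1} with $k\neq 0$). No further comment is needed.
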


\begin{cor} \label{cor:maxexistence}
Every degenerate orbit contains a maximal break.
\end{cor}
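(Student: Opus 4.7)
The plan is to use Corollary \ref{cor:breakstructure} to decouple the break conditions for different indices $i$, since that corollary shows that whether $\si_g(\mf)$ is an $i$-break depends only on the $i$-th coordinate $g_i$ of the shift.

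First I would fix a degenerate orbit $\mathcal{O}$ and let $I=\{i\in\{1,\ldots,n\}\mid \mathcal{O}\text{ contains an }i\text{-break}\}$. By hypothesis $I\neq\emptyset$. For each $i\in I$, pick some $i$-break $\mf_i\in\mathcal{O}$, i.e.\ $t_i\in\mf_i$. Fix any base point $\mf_0\in\mathcal{O}$ and write $\mf_i=\si_{g^{(i)}}(\mf_0)$ for some $g^{(i)}\in\Z^n$ (such $g^{(i)}$ exists because $\mathcal{O}$ is a single $\Z^n$-orbit).

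Next, define $h\in\Z^n$ by $h_i=g^{(i)}_i$ for $i\in I$ and $h_j=0$ for $j\notin I$, and set $\mf:=\si_h(\mf_0)\in\mathcal{O}$. For each $i\in I$, observe that $\mf=\si_{h-g^{(i)}}(\mf_i)$, and the $i$-th coordinate of $h-g^{(i)}$ is $h_i-g^{(i)}_i=0$ by construction. Since $t_i\in\mf_i$, Corollary \ref{cor:breakstructure} applied to $\mf_i$ then gives $t_i\in\mf$. Hence $\mf$ is an $i$-break for every $i\in I$, which is exactly the definition of a maximal break, and $\mf\in\mathcal{O}$ as required.

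There is no real obstacle here: the whole argument is a one-step reduction to Corollary \ref{cor:breakstructure}, which is what makes the index-by-index construction of $h$ consistent. The only thing to check (implicitly handled by the choice of $h$) is that the condition ``$\mf$ is an $i$-break'' is independent across different $i\in I$, which is precisely the content of that corollary.
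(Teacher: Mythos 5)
Your proof is correct and takes exactly the approach the paper intends: it reduces the statement to Corollary \ref{cor:breakstructure} (whose content is that, for $j\neq i$, shifting by $\si_j$ preserves the $i$-break property), and uses it to build a single $\mf\in\mathcal{O}$ that is simultaneously an $i$-break for every $i$ for which the orbit has one. The ``define $h$ coordinate-by-coordinate from the individual break positions'' step is the clean way to phrase the merging, and your verification that $(h-g^{(i)})_i=0$ is precisely what makes \ref{cor:breakstructure} applicable.
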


\begin{rem} Corollary \ref{cor:maxexistence} holds for any TGW algebra
 of Lie type $(A_1)^n$ using the fact that $\si_j(t_i)=\ga_{ji} t_i$ for any $j\neq i$.
\end{rem}

\subsection{General results on simple weight modules with no proper inner breaks}
We collect here some notation and results from \cite{Ha06}.

Let $A=\TGWA{R}{\si}{t}{\mu}$ be a twisted generalied Weyl algebra.
Let $V$ be a simple weight module over $A$.

\begin{dfn}[\cite{Ha06}]
$V$ has \emph{no proper inner breaks} if for
any $\mf\in\Supp(V)$ and any homogenous $a$ with $aM_\mf\neq 0$
we have $a' a\notin \mf$ for some homogenous $a'$ with $\deg(a')=-\deg(a)$.
\end{dfn}
This definition is slightly different than the one given in \cite{Ha06} but
can be proved to be equivalent.
Consider the following sets (also equivalent to the definitions in \cite{Ha06}), defined for any $\mf\in\Specm(R)$.
\begin{align}
\tilde G_\mf &:= \{g\in \Z^n\mid \text{$A_{-g}A_g$ is not contained in $\mf$}\},\\
G_\mf &:= \tilde G_\mf \cap \Stab_{\Z^n}(\mf).
\end{align}
One can show that $G_\mf$ is a subgroup of $\Z^n$ and $\tilde G_\mf$ is a union of cosets from $\Z^n/G_\mf$.

Fix now $\mf\in\Supp(V)$.
One checks that the subalgebra $B(\mf):=\bigoplus_{g\in\Stab(\mf)} A_g$
of $A$ preserves the weight space $V_\mf$.
For any $g\in \tilde G_\mf$ we pick elements $a_g\in A_g$ and $a_g'\in A_{-g}$ such that $a_g'a_g\notin \mf$.
The following theorem describes the simple weight modules with no proper inner breaks up to the structure of $V_\mf$ as a $B(\mf)$-module.
\begin{thm}[\cite{Ha06}] \label{thm:NPIB}
Suppose $V$ has no proper inner breaks. If $\{v_i\}_{i\in J}$ is a $\K$-basis for $V_\mf$ ($J$ some index set), then the following is a $\K$-basis for $V$:
\begin{equation}
C:=\{a_g v_i\mid g\in S, i\in J\}
\end{equation}
where $S\subseteq\tilde G_\mf$ is a set of representatives for $\tilde G_\mf$ modulo $G_\mf$.
Moreover,
for any $v\in V_\mf$, any $i\in\{1,\ldots,n\}$ and $g\in S$ we have
\begin{equation}
X_ia_g v = \begin{cases} a_h b_{g,i} v,& g+e_i\in\tilde G_{\mf}\\
0,&\text{otherwise}, \end{cases}
\qquad
Y_ia_g v = \begin{cases} a_k c_{g,i} v,& g-e_i\in\tilde G_{\mf}\\
0,&\text{otherwise}. \end{cases}
\end{equation}
where $h,k\in S$ with $h\in (g+e_i)+ G_\mf$ and $k\in (g-e_i)+G_\mf$
and $b_{g,i},c_{g,i}\in B(\mf)$ are given by
\begin{equation}\label{eq:bgicgi}
b_{g,i}=\si_{-h}(X_ia_ga_{g+e_i-h}'a_h')a_{g+e_i-h},
\qquad
c_{g,i}=\si_{-k}(Y_ia_ga_{g-e_i-k}'a_k')a_{g-e_i-k}.
\end{equation}
\end{thm}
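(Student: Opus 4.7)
The plan is to determine the support of $V$, identify each weight space with $a_gV_\mf$ for a single coset representative, and derive the action formulas for $X_i,Y_i$. Throughout I would normalize each $a_g'$ by rescaling (multiplying by a nonzero element of $R$) so that $a_g'a_g\equiv 1\pmod{\mf}$; this is possible since $a_g'a_g\not\in\mf$ and $R/\mf\simeq\K$, and it ensures $a_g'a_g$ acts as the identity on $V_\mf$.

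The intertwining $a\cdot r=(\deg a)(r)\cdot a$ for homogeneous $a\in A$ and $r\in R$ gives $A_gV_\mf\subseteq V_{g\cdot\mf}$, so $\Supp(V)\subseteq\Z^n\cdot\mf$. The NPIB hypothesis makes $a_g\colon V_\mf\to V_{g\cdot\mf}$ injective (if $a_gv=0$ then $v=a_g'a_gv=0$), so $A_gV_\mf\neq 0$ precisely when $g\in\tilde G_\mf$; simplicity of $V$ yields $V=AV_\mf=\sum_{g\in\tilde G_\mf}A_gV_\mf$. To upgrade this to $A_gV_\mf=a_gV_\mf$, for $x\in A_g$ and $v\in V_\mf$ compute
\[ xv=x\,a_g'a_gv=(xa_g')\,a_gv=a_g\,\si_{-g}(xa_g')\,v, \]
using $r\,a_g=a_g\,\si_{-g}(r)$ for $r\in R$. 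Since $\si_{-g}(xa_g')\in R$ acts on $V_\mf$ through $R/\mf\simeq\K$, we get $xv\in a_gV_\mf$. Letting $S$ be a set of representatives of $\tilde G_\mf/G_\mf$, the subspaces $a_gV_\mf$ for $g\in S$ yield a direct sum decomposition of $V$, and the $\K$-linear injectivity of each $a_g$ on $V_\mf$ makes $\{a_gv_i\mid g\in S,\,i\in J\}$ a $\K$-basis.

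For the action formulas, $X_ia_gv\in V_{(g+e_i)\cdot\mf}$ vanishes unless $g+e_i\in\tilde G_\mf$; otherwise pick $h\in S$ with $g+e_i-h\in G_\mf\subseteq\Stab(\mf)$, so $a_{g+e_i-h}\in B(\mf)$ preserves $V_\mf$. Inserting the normalization identities $a_{g+e_i-h}'a_{g+e_i-h}v=v$ and $a_h'a_h\cdot a_{g+e_i-h}v=a_{g+e_i-h}v$, and using that $X_ia_ga_{g+e_i-h}'a_h'$ has degree zero (hence lies in $R$) together with $a_h\,\si_{-h}(r)=r\,a_h$, one computes
\[ X_ia_gv=(X_ia_ga_{g+e_i-h}'a_h')\,a_ha_{g+e_i-h}v=a_h\,\si_{-h}(X_ia_ga_{g+e_i-h}'a_h')\,a_{g+e_i-h}v=a_hb_{g,i}v. \]
The $Y_i$ formula follows by the symmetric argument. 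The main obstacle is setting up the normalization of the $a_g'$ coherently so that every intermediate $a_f'a_f$ collapses to the identity on the appropriate weight space; once this is arranged, the identities reduce to direct TGW manipulation.
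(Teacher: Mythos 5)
Your computation of the weight-shifting property, the normalization (which, incidentally, needs only that $R/\mf$ is a field, not $R/\mf\simeq\K$ --- since $\mf$ is maximal one can rescale $a_g'$ by any $R$-lift of $(a_g'a_g+\mf)^{-1}$), the injectivity of $a_g$ on $V_\mf$, the identity $A_gV_\mf=a_gV_\mf$, and the derivation of the action formulas are all correct and in the spirit of the cited result from \cite{Ha06}.

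The gap is the sentence ``the subspaces $a_gV_\mf$ for $g\in S$ yield a direct sum decomposition of $V$.'' What you have actually established is only $V=\sum_{g\in\tilde G_\mf}a_gV_\mf$ with $a_gV_\mf\subseteq V_{\si_g(\mf)}$. To obtain the basis claim you still need two further facts: (i) if $g_1,g_2\in\tilde G_\mf$ satisfy $\si_{g_1}(\mf)=\si_{g_2}(\mf)$ then $g_1-g_2\in G_\mf$ (equivalently $g_1-g_2\in\tilde G_\mf$), so that cosets in $S$ go bijectively to weights in $\Supp(V)$; and (ii) for $g'\in g+G_\mf$ one has $a_{g'}V_\mf=a_gV_\mf$, so that $V_{\si_g(\mf)}$ equals $a_gV_\mf$ exactly, rather than a possibly larger sum $\sum_{g'}a_{g'}V_\mf$ over several $g'$ in the same weight class. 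Neither is a formal consequence of the NPIB condition together with the normalization; both rest on the fact that $V_\mf$ is a simple $B(\mf)$-module, which comes from the Drozd--Futorny--Ovsienko theory of Harish-Chandra subalgebras and is cited in Section 8 of the paper (\cite{DFO}, and \cite[Prop.~7.2]{MPT} for the TGW case). Without this input a weight space could a priori contain two distinct subspaces $a_{g_1}V_\mf\neq a_{g_2}V_\mf$ with $g_1\not\equiv g_2 \pmod{G_\mf}$, and the proposed set $C$ would fail to span. In the situation the paper actually uses --- where $\Stab(\mf)$ is trivial by Proposition \ref{prp:stabilizer}, so $G_\mf=\{0\}$ and $B(\mf)=R$ --- both (i) and (ii) are vacuous and your argument is complete; but as a proof of Theorem \ref{thm:NPIB} in its stated generality the direct-sum step is a genuine missing piece.
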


\subsection{The case of trivial stabilizer}
We show here a theorem which implies that all simple weight modules
over $\MTWA{n}{k}{r}{s}{\La}/\langle\nf\rangle$ have no proper inner breaks.
\begin{thm} If $V$ is a simple weight module over a twisted generalized Weyl algebra $\TGWA{R}{\si}{t}{\mu}$ such that the stabilizer $\Stab(\mf)$ is trivial for some (hence all) weight $\mf\in\Supp(V)$, then $V$ has no proper inner breaks.
\end{thm}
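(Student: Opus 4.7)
The plan is to exploit triviality of the stabilizer to pin down the $\mf$-weight space of the cyclic submodule generated by $aw$, and then use simplicity to derive a contradiction.

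First I would justify the ``hence all'' parenthetical. Since $V$ is simple and each homogeneous element of $A_g$ maps $V_\mf$ into $V_{\si_g(\mf)}$, the subspace $\bigoplus_{g\in\Z^n}V_{\si_g(\mf)}$ is a nonzero submodule, hence equals $V$; so $\Supp(V)\subseteq \Z^n\cdot\mf$. Because $\Z^n$ is abelian, $\Stab(\si_g(\mf))=\Stab(\mf)$ for every $g$, which justifies that triviality at one weight is equivalent to triviality at all of them.

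Now fix $\mf\in\Supp(V)$ with $\Stab(\mf)=0$, let $a\in A_g$ be homogeneous with $aV_\mf\neq 0$, and pick $w\in V_\mf$ with $aw\neq 0$. The key observation is that a homogeneous element $b\in A_h$ sends $aw\in V_{\si_g(\mf)}$ into $V_{\si_{h+g}(\mf)}$, and $\si_{h+g}(\mf)=\mf$ forces $h+g\in\Stab(\mf)=0$, i.e.\ $h=-g$. By simplicity of $V$ we have $V=A(aw)$; decomposing every element of $A$ into homogeneous components and projecting onto the $\mf$-weight component, this gives
\[
V_\mf \;=\; A_{-g}\cdot(aw)\;=\;(A_{-g}\,a)\,w.
\]
Now suppose, for contradiction, that $a'a\in\mf$ for every $a'\in A_{-g}$. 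Then $A_{-g}a\subseteq\mf\subseteq R=A_0$, and since $w\in V_\mf$ we have $\mf\cdot w=0$, so $(A_{-g}a)w=0$. Combined with the displayed equality this gives $V_\mf=0$, contradicting $w\in V_\mf\setminus\{0\}$. Hence some $a'\in A_{-g}$ satisfies $a'a\notin\mf$, which is exactly the no-proper-inner-breaks condition.

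The main obstacle here is just keeping the weight-space bookkeeping straight: one needs the equality $V_\mf=A_{-g}(aw)$, which is not merely the inclusion ``$\supseteq$'' but the ``$\subseteq$'' coming from triviality of $\Stab(\mf)$ — this is precisely where the hypothesis is used, and without it one could only conclude $V_\mf=\bigoplus_{h\in\Stab(\mf)-g}A_h(aw)$, which would not force the weight space to vanish.
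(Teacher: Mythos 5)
Your proof is correct and follows essentially the same strategy as the paper's: use simplicity to see that $aV_\mf$ generates $V$, then use the trivial-stabilizer hypothesis to identify the $\mf$-weight component of $A \cdot aV_\mf$ with $A_{-g}\cdot aV_\mf$, and conclude that some $a'a$ lies in $R\setminus\mf$. The paper phrases this via $V_\mf\cap AaV_\mf\neq 0$ rather than via a projection equality, but the content is identical.
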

\begin{proof} Suppose $\mf\in\Supp(V)$ has trivial stabilizer. Let $g\in\Z^n$ and assume
$a\in A_g$ is such that $aV_\mf\neq 0$. Since $V$ is simple, $V_\mf\cap AaV_\mf\neq 0$.
But $V_\mf\cap AaV_\mf\subseteq A_{-g}aV_\mf$ since $\mf$ has trivial stabilizer.
This shows that there exists an element $b\in A_{-g}$ such that $ba V_{\mf}\neq 0$.
Since $\deg(ba)=0$ we have $ba\in R$. Then $baV_\mf\neq 0$ implies $ba\notin\mf$.
\end{proof}

\subsection{Abstract description of the simple weight modules in case of trivial stabilizer}
Let $A=\TGWA{R}{\si}{t}{\mu}$ be a TGWA
where $\mu$ is symmetric.
In \cite{MPT} a description of all simple weight modules with support in an orbit with trivial
stabilizer is given in terms of a Shapovalov type form. The form used in \cite{MPT} requires
the matrix $\mu$ to be symmetric (due to its formulation in terms of a certain involution on the TGWA).
As is observed in \cite{HO}, there is another way to define a bilinear form which works for general $\mu$.
It is given as follows. Let $\mathfrak{p}_0:A\to A_0=R$ be the graded projection onto the degree zero
component of $A$ with respect to the standard $\Z^n$-gradation on $A$. Then put
\begin{equation}
 F:A\times A\to R,\qquad F(a,b)=\mathfrak{p}_0(ab).
\end{equation} 
Such forms have been studied for arbitrary group graded rings \cite{CR}.

We have the following result.
\begin{thm}
Let $A=\TGWA{R}{\si}{t}{\mu}$ be any twisted generalized Weyl algebra.
Let $V$ be any simple weight module over $A$ such that $\Stab(\mf)=\{0\}$
for $\mf\in\Supp(V)$. Then $V\simeq A/N(\mf)$ where $A$ is considered as
a left module over itself and $N(\mf)$ is the left ideal given by
\begin{equation}
N(\mf)=\{a\in A\mid F(b,a)\in\mf \;\forall b\in A\}
\end{equation}
\end{thm}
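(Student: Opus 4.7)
The plan is to exhibit an explicit surjective left $A$-module map $\varphi\colon A\to V$ and then identify $\ker\varphi$ with $N(\mf)$. Pick any nonzero $v\in V_\mf$. Because $V_\mf=\{w\in V\mid \mf w=0\}$ is nonzero and $\mf$ is a maximal ideal of $R$, the annihilator $\Ann_R(v)$ is exactly $\mf$. Define $\varphi(a)=av$; this is a nonzero left $A$-module map, hence surjective by simplicity of $V$, so $V\simeq A/\ker\varphi$ and it suffices to show $\ker\varphi=N(\mf)$.

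The argument now reduces to a graded fiberwise statement. Writing $a=\sum_g a_g$ with $a_g\in A_g$, the vectors $a_g v$ lie in the weight spaces $V_{\si_g(\mf)}$; triviality of $\Stab(\mf)$ makes the maximal ideals $\si_g(\mf)$ pairwise distinct, and the corresponding weight spaces are linearly independent. Therefore $av=0$ iff $a_gv=0$ for every $g$. On the $N(\mf)$ side, for $b=\sum_h b_h$ we have $\mathfrak{p}_0(ba_g)=b_{-g}a_g$, so $a\in N(\mf)$ iff $b_{-g}a_g\in\mf$ for every $g$ and every $b_{-g}\in A_{-g}$. Thus the theorem is reduced to showing, for each fixed $g\in\Z^n$,
\[ a_g v=0 \iff b_{-g}a_g\in\mf \text{ for all } b_{-g}\in A_{-g}. \]

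The forward implication is immediate: $b_{-g}a_g\in A_0=R$ and $a_gv=0$ yield $(b_{-g}a_g)v=0$, so $b_{-g}a_g\in\Ann_R(v)=\mf$. The substantial step, which I expect to be the main obstacle, is the converse; I would argue contrapositively. Assume $a_gv\neq 0$. By simplicity of $V$ we have $A(a_gv)=V\ni v$, so there exists $c\in A$ with $v=c(a_gv)$. Decomposing $c=\sum_h c_h$ yields $v=\sum_h c_h a_g v$, where each summand satisfies $c_h a_g v\in V_{\si_{h+g}(\mf)}$. Invoking $\Stab(\mf)=\{0\}$ once more, only the term with $h=-g$ can lie in $V_\mf$, so by linear independence of distinct weight spaces $c_{-g}a_g v=v\neq 0$. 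Taking $b_{-g}:=c_{-g}\in A_{-g}$ gives $b_{-g}a_g v\neq 0$, hence $b_{-g}a_g\notin\mf$, as required.

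Both the simplicity of $V$ (to produce the element $c$) and the triviality of the stabilizer (to isolate a single graded component $c_{-g}$ via the direct sum of weight spaces) enter essentially in this direction; without either assumption, several summands $c_h a_g v$ could in principle combine to give $v$ without any individual $c_{-g}a_g$ lying outside $\mf$, and the identification $\ker\varphi=N(\mf)$ would fail.
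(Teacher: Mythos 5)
Your proof is correct and complete. The paper itself does not spell out a proof — it simply refers to the case of symmetric $\mu$ handled in the Mazorchuk--Ponomarenko--Turowska paper — so there is nothing to compare line by line; but your argument is precisely the kind of direct adaptation the authors have in mind when they introduce the pairing $F(a,b)=\mathfrak{p}_0(ab)$ as a replacement for the Shapovalov form. A few points worth noting on why everything you wrote goes through: the identification $\Ann_R(v)=\mf$ uses both that $\mf\in\Supp(V)$ gives a nonzero $v$ with $\mf v=0$ and that $\mf$ is maximal while $\Ann_R(v)$ is proper; the reduction $F(b,a)=\sum_g b_{-g}a_g$ is legitimate because $A_hA_g\subseteq A_{h+g}$, and restricting to homogeneous $b$ then isolates each summand, so the graded-fiberwise reformulation of $N(\mf)$ is exact, not merely sufficient; and the contrapositive step correctly invokes simplicity to produce $c$ with $c(a_gv)=v$ and then uses triviality of $\Stab(\mf)$ together with the direct-sum decomposition of $V$ into weight spaces to extract the single component $c_{-g}a_gv=v$. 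Each hypothesis is used exactly once and indispensably, as you observe. This is a clean, self-contained proof that fills in what the paper leaves as a citation.
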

\begin{proof}
Similar to the case of symmetric $\mu$ proved in
\cite[Lemma 6.1 and Corollary 6.2]{MPT}.
\end{proof}

\subsection{Bases and explicit action on the simple weight modules over $\MTWA{n}{k}{r}{s}{\La}$}
Let $n,k,r,s,\La$ be as in Section \ref{sec:MTWA_def}. Assume that for each $i=1,\ldots,n$, the scalar $r_{ii}/s_{ii}$ is not a root of unity.
Let $R,\si,t,\mu$ be as in Section \ref{sec:MTWA_def}.

Let $J$ be any $\Z^n$-invariant ideal of $R$.
Let $A=\TGWA{R/J}{\bar \si}{\bar t}{\mu}$.
Thus for $J=0$, $A$ equals the multiparameter twisted Weyl algebra $\MTWA{n}{k}{r}{s}{\La}$, and for $J=R\nf$ where $\nf\in\Specm(R^{\Z^n})$, $A$ equals a simple quotient of the algebra in the former case.

We will describe the simple weight modules over $A$, using Theorem \ref{thm:NPIB}

Let $V$ be a simple weight module over $A$.
Let $\mf\in\Supp(V)$. Since $\K$ is algebraically closed we have
\[\mf=(\bar u_i-\al_i, \bar v_i-\be_i\mid i=1,\ldots,n)\]
where $\bar u_i=u_i+J, \bar v_i=v_i+J$ and $\al_i,\be_i\in\K\backslash\{0\}$
for $i=1,\ldots,n$.

We determine the set $\tilde G_\mf$. Let $g\in\Z^n$.
Since $A_g=\bar R Z^{(g)}$ (where $Z^{(g)}=Z_1^{(g)}\cdots Z_n^{(g_n)}$ 
where $Z_i^{(j)}$ equals $X_i^j$ if $j\ge 0$ and $Y_i^{-j}$ otherwise) 
and $\forall i\neq j: \;\si_i(t_j)=\ga_{ij}t_i$ for some $\ga_{ij}\in\K\backslash\{0\}$  it is clear that
\begin{align*}
\tilde G_\mf &= \{g\in\Z^n\mid Z^{(-g)}Z^{(g)}\notin \mf\}=\\
&=\{g\in\Z^n\mid Z_1^{(-g_1)}Z_1^{(g_1)}\cdots Z_n^{(-g_n)}Z_n^{(g_n)}\notin\mf\}=\\
&=\{g\in\Z^n\mid Z_i^{(-g_i)}Z_i^{(g_i)}\notin\mf \;\forall i\}=\\
&=\tilde G_\mf^{(1)}\times\cdots\times \tilde G_\mf^{(n)},
\end{align*}
where
\begin{equation}\label{eq:tildeGmi}
\tilde G_\mf^{(i)}:= \{g\in\Z^n\mid Z_i^{(-g_i)}Z_i^{(g_i)}\notin\mf\}.
\end{equation} 
For $j>0$ we have
\begin{equation}\label{eq:litenkalkyl1}
Z_i^{(-j)}Z_i^{(j)}=Y_i^jX_i^j=t_i\si_i^{-1}(t_i)\cdots\si_i^{-j+1}(t_i)
\end{equation}
while for $j<0$,
\begin{equation}\label{eq:litenkalkyl2}
Z_i^{(-j)}Z_i^{(j)}=X_i^{-j}Y_i^{-j}=\si_i(t_i)\si_i^2(t_i)\cdots
\si_i^{-j}(t_i).
\end{equation}
So, since $\mf$ is maximal, hence prime, we see that if $j>0$ and $j\in \tilde G_\mf^{(i)}$ then $\{0,1,\ldots,j\}\subseteq \tilde G_\mf^{(i)}$.
Similarly if $j<0$ and $j\in\tilde G_\mf^{(i)}$ then $\{j,j+1,\ldots,0\}\subseteq \tilde G_\mf^{(i)}$.

We distinguish between three possibilities.
The first case is that $\tilde G_\mf ^{(i)}=\Z$. Then we say that
(the support of) $V$ is \emph{generic} in the $i$:th direction.
The second case is $j\notin\tilde G_\mf^{(i)}$ for some positive integer $j$.
Assuming $j$ is the smallest such integer,
 by \eqref{eq:tildeGmi} and \eqref{eq:litenkalkyl1} we get $\si_i^{-j+1}(t_i)\in \mf$.
By Corollary \ref{cor:breakstructure} it follows that $\si_i^{m}(t_i)\notin\mf$ for
all integers $m\neq j$. Thus $\tilde G_\mf^{(i)}=\{m\in\Z\mid m\le j-1\}$.
By Theorem \ref{thm:NPIB}, $\Supp(V)=\{\si_g(\mf)\mid g\in G_\mf\}$
and thus we can replace $\mf$ by $\si_i^{k-1}(\mf)$. Doing this, the new $j$ just equals $1$
and $G_\mf^{(i)}=\Z_{\le 0}$.
We say that $\mf$ is a \emph{highest weight} for $V$ in the $i$:th direction.
The final case is that $j\notin\tilde G_\mf^{(i)}$ for some negative integer $j$.
This is analogous to the previous case and leads to that, without loss of generality,
$\tilde G_\mf^{(i)}=\Z_{\ge 0}$ in which case we say that $\mf$ is a \emph{lowest weight}
for $V$ in the $i$:th direction.

In other words, there is an $\mf\in\Supp(V)$ such that
the shape of the support of $V$ is characterized by a vector
\begin{equation}
\tau\in\{-1,0,1\}^n
\end{equation}
via the relation
\begin{equation}
\tilde G_\mf^{(i)}=\{j\in\Z\mid j\cdot \tau_i \ge 0\}\qquad\forall i\in\{1,\ldots,n\}.
\end{equation}

 Since the stabilizer of $\mf$ is trivial by Proposition
\ref{prp:stabilizer}, the subalgebra $B(\mf)$ in Theorem \ref{thm:NPIB} is just $R$.
From well known results \cite{DFO} (see \cite[Proposition 7.2]{MPT} for a proof in the TGW
algebra case),
it follows that $V_\mf$ is simple as a $B(\mf)$-module since $V$ is simple as an $A$-module.
 Thus, since $R/\mf=\K$, we have $\dim_\K V_\mf=1$.
Pick $v_0\in V_\mf,\, v_0\neq 0$. Then Theorem \ref{thm:NPIB} implies that the set
\begin{equation}
C=\{v_g:=Z_1^{(g_1)}\cdots Z_n^{(g_n)} v_0\mid g=(g_1,\ldots ,g_n)\in \tilde G_\mf \}
\end{equation}
is a $\K$-basis for $V$, where $Z_i^{(j)}=X_i^j$ if $j\ge 0$ and $Y_i^{-j}$ otherwise.
Furthermore, the action of $X_i$, $Y_i$ on the elements of $C$ is given by
\begin{equation}
X_iv_g = \begin{cases} b_{g,i} v_{g+e_i},& \text{if $(g_i+1)\tau_i\ge 0$},\\
0,&\text{otherwise}, \end{cases}
\qquad
Y_iv_g = \begin{cases} c_{g,i} v_{g-e_i},& \text{if $(g_i-1)\tau_i\ge 0$},\\
0,&\text{otherwise}, \end{cases}
\end{equation}
for certain $b_{g,i}, c_{g,i}\in\K$. Although the formulas \eqref{eq:bgicgi}
can be used to calculate these scalars, one can also use a more direct approach
which is available due to our knowledge of the commutation relations \eqref{eq:mtwarels} among
the generators $X_i,Y_i$ in $A$. Straightforward calculation gives the following.
\begin{align}
b_{g,i}&=
 \ga_{i1}^{(g_1)}\cdots\ga_{i,i-1}^{(g_{i-1})} \cdot
 r_{i+1,i}^{kg_i}\cdots r_{ni}^{kg_n} \cdot 
\begin{cases}
1&\text{if $g_i\ge 0$,}\\
 \frac{r_{ii}^{(1-g_i)k}\al_i^k-s_{ii}^{(1-g_i)k}\be_i^k}{r_{ii}^k-s_{ii}^k}
&\text{if $g_i<0$,}
\end{cases} \\
c_{g,i}&=\ep_{i1}^{(g_1)}\cdots \ep_{i,i-1}^{(g_{i-1})} \cdot
 r_{i+1,i}^{kg_i}\cdots r_{ni}^{kg_n} \cdot
\begin{cases}
 \frac{r_{ii}^{kg_i}\al_i^k-s_{ii}^{kg_i}\be_i^k}{r_{ii}^k-s_{ii}^k}
  &\text{if $g_i\ge 0$,}\\
 1 & \text{if $g_i<0$,}
\end{cases} 
\end{align}
where
\begin{equation}
\ga_{ij}^{(l)}=
\begin{cases}
\big( (r_{ji}/r_{ij})^k\la_{ij}\big) ^l, &l\ge 0,\\
\big( r_{ji}^k\la_{ij}\big) ^l,& l<0,
\end{cases}
\qquad
\ep_{ij}^{(l)}=
\begin{cases}
\big(r_{ij}^k\la_{ji}\big)^l, & l \ge 0,\\
\la_{ji}^l,&l<0.
\end{cases}
\end{equation}

\section{Whittaker modules} \label{sec:whittaker}
\begin{dfn}
Let $A$ be a twisted generalized Weyl algebra of degree $n$.
A module $V$ over $A$ is called a \emph{Whittaker module}
if there exists a vector $v_0\in V$ (called \emph{Whittaker vector)}
 and nonzero scalars $\zeta_1,\ldots,\zeta_n\in\K\backslash\{0\}$
 such that the following conditions hold:
\begin{itemize}
\item $V=Av_0$,
\item $X_iv_0=\zeta_i v_0$ for each $i=1,\ldots,n$.
\end{itemize}
The pair $(V,v_0)$ is called a \emph{Whittaker pair of type $(\zeta_1,\ldots,\zeta_n)$}.
A morphism of Whittaker pairs $(V,v_0)\to (W,w_0)$ is an $A$-module morphism $V\to W$
mapping $v_0$ to $w_0$.
\end{dfn}

The following theorem describes Whittaker pairs over a family of TGWAs which
properly includes all generalized Weyl algebras in which the $t_i$ are regular.
It is a generalization of \cite[Theorem 3.12]{BO}.
We use the notation from Section \ref{sec:A1n_pres}.
\begin{thm}
Let $A=\TGWA{R}{\si}{t}{\mu}$ be a $\K$-finitistic TGW algebra of Lie type $(A_1)^n$.
Assume that $(R,\si,t)$ is $\mu$-consistent and that $R$ is Noetherian.
\begin{enumerate}[{\rm (a)}]
\item If $A$ has a Whittaker module, then
\begin{equation}\label{eq:Whittaker_cond}
\text{$\ga_{ij}=\mu_{ij}$ for all $i\neq j$.}
\end{equation}
\item Conversely, if \eqref{eq:Whittaker_cond} holds, then for each $\zeta\in(\K\backslash\{0\})^n$,
there is a bijection
\begin{align*}
\left\{\begin{gathered}
\text{Isomorphism classes $[(V,v_0)]$ of}\\
\text{Whittaker pairs of type $\zeta$}
\end{gathered}\right\}
&\overset{\Psi}{\longrightarrow}
\left\{\begin{gathered} \text{Proper $\Z^n$-invariant left ideals $Q$ of $R$} \end{gathered}\right\}\\
[(V,v_0)] &\longmapsto \Ann_R v_0 \\
[(R/Q, 1+Q)] &  \longleftarrow Q
\end{align*}
where $R/Q$ is given an $A$-module structure by 
\begin{gather}\label{eq:RQAmod}
\begin{aligned}
s.\bar r &= \overline{sr} \quad\forall s\in R,\\
X_i. \bar r &=  \zeta_i \overline{\sigma_i(r)}, \\
Y_i. \bar r &=  \zeta_i^{-1} \overline{\sigma_i^{-1}(r) t_i},
\end{aligned}
\end{gather}
for all $\bar r\in R/Q$, where $\bar r:=r+Q\in R/Q$ for $r\in R$.
\item[c)] Futhermore, there is a morphism of Whittaker pairs $(V,v_0)\to (W,w_0)$
iff $\Psi\big([(V,v_0)]\big)\subseteq\Psi\big([(W,w_0)]\big)$.
\end{enumerate}
\end{thm}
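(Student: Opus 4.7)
The plan is to treat (a) as a quick compatibility check and to centre the real work on (b); part (c) will then fall out of the structural description obtained in (b).

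For (a), I would apply the type $(A_1)^n$ commutation $X_iX_j = \ga_{ij}\mu_{ij}^{-1}X_jX_i$ from Theorem~\ref{thm:A1n_pres} to the Whittaker vector $v_0$. This yields $\zeta_i\zeta_jv_0 = \ga_{ij}\mu_{ij}^{-1}\zeta_i\zeta_jv_0$, and since $v_0\neq 0$ and each $\zeta_i\in\K^\times$, we conclude $\ga_{ij}=\mu_{ij}$.

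For (b), I envisage four steps. First, check that \eqref{eq:RQAmod} respects all defining relations of $A$ from Theorem~\ref{thm:A1n_pres}: well-definedness modulo $Q$ is $\Z^n$-invariance of $Q$, most relations are routine, and the relations $X_iY_j = \mu_{ij}Y_jX_i$ and $Y_jY_i = \ga_{ij}\mu_{ji}^{-1}Y_iY_j$ are where the hypothesis $\ga_{ij} = \mu_{ij}$ must be used (together with $\si_i(t_j) = \ga_{ij}t_j$ from \eqref{eq:A1nsigmagamma} and the centrality of the $t_i$). It is then immediate that $(R/Q,\bar 1)$ is a Whittaker pair of type $\zeta$ with $\Ann_R\bar 1 = Q$. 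Second, show $\Ann_Rv_0$ is a proper $\Z^n$-invariant left ideal: properness from $v_0\neq 0$, $\si_i$-invariance from $0 = X_irv_0 = \zeta_i\si_i(r)v_0$ for $r\in\Ann_Rv_0$, and $\si_i^{-1}$-invariance by observing that $\si_i(\Ann_Rv_0)\subseteq\Ann_Rv_0$ forces the ascending chain $\Ann_Rv_0\subseteq\si_i^{-1}(\Ann_Rv_0)\subseteq\si_i^{-2}(\Ann_Rv_0)\subseteq\cdots$ to stabilise, by Noetherianity of $R$. Third, and this is the crux, prove $V = Rv_0$: starting from $V = Av_0$ and the decomposition $A = \bigoplus_{g\in\Z^n}R\cdot Z_1^{(g_1)}\cdots Z_n^{(g_n)}$, with $Z_i^{(k)}=X_i^k$ for $k\geq 0$ and $Y_i^{-k}$ otherwise, induct on $\sum|g_i|$: the base cases $X_iv_0=\zeta_iv_0$ and $Y_iv_0=\zeta_i^{-1}t_iv_0$ (the latter from $Y_iX_i=t_i$) lie in $Rv_0$, and the type $(A_1)^n$ commutation relations \eqref{eq:pres_rels2}, together with $Z_i^{(k)}r = \si_i^k(r)Z_i^{(k)}$, let one reduce any longer monomial applied to $v_0$ to an element of $Rv_0$. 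Fourth, the map $r+Q\mapsto rv_0$ is then a well-defined $R$-module isomorphism $R/Q\to V$ whose compatibility with the $X_i,Y_i$-actions reproduces \eqref{eq:RQAmod}, giving the two-sided inverse of $\Psi$.

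For (c), a morphism $f:(V,v_0)\to(W,w_0)$ of Whittaker pairs satisfies $f(rv_0)=rw_0$, so $\Ann_Rv_0\subseteq\Ann_Rw_0$; conversely, this inclusion defines a unique well-defined $R$-linear map $rv_0\mapsto rw_0$, which is automatically $A$-linear since the $X_i,Y_i$-action on $Rv_0$ is completely determined by the base formulas in step three. The main obstacle I anticipate is precisely this reduction $V=Rv_0$: although the $(A_1)^n$ relations are explicit, one must track scalar factors carefully when shuffling $X_i$'s, $Y_i$'s and elements of $R$ past one another, and verify that the various ways of reducing a monomial yield consistent scalars (where the identity $\ga_{ji}=\mu_{ji}$ from part (a) also enters). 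The Noetherian trick for $\si_i^{-1}$-invariance in step two is a secondary but slightly subtle ingredient.
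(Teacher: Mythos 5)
Your proposal is correct and follows the same essential strategy as the paper's proof: part (a) is the same one-line computation, part (b) verifies the module structure \eqref{eq:RQAmod} (using $\sigma_i(t_j)=\gamma_{ij}t_j$ and $\gamma_{ij}=\mu_{ij}$ in exactly the two relations you single out), and part (b) uses the same Noetherian trick for $\sigma_i^{-1}$-invariance of $\Ann_R v_0$. The only mild divergence is in how bijectivity of $\Psi$ is packaged: the paper follows Benkart--Ondrus by introducing the universal Whittaker module $V_{\mathfrak u}=A\otimes_{A_+}\K_\zeta$, observing that $\iota\colon R\to V_{\mathfrak u}$, $r\mapsto r\otimes 1$, is an $R$-module isomorphism, and identifying the kernel of the induced map $V_{\mathfrak u}\to V$ with $\Ann_R v_0$; you instead establish $V=Rv_0$ directly by induction on $\sum|g_i|$ using the $(A_1)^n$ relations, and then exhibit $r+Q\mapsto rv_0$ as the explicit two-sided inverse. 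Both routes hinge on the same underlying fact (that $A_0=R$ together with the monomials $Z^{(g)}$ applied to $v_0$ land in $Rv_0$), so the arguments are essentially interchangeable; yours is marginally more elementary in that it bypasses the universal object.
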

\begin{proof}
a) Suppose $(V,v_0)$ is a Whittaker pair with respect to $\zeta\in(\K\backslash\{0\})^n$.
Then for $i\neq j$, $X_iX_j v_0 = \zeta_i\zeta_j v_0$. On the other hand,
by relation \eqref{eq:pres_rels2}, $X_iX_j=\ga_{ij}\mu_{ij}^{-1} X_jX_i$ and thus
$X_iX_j v_0 = \ga_{ij}\mu_{ij}^{-1}X_jX_iv_0= \ga_{ij}\mu_{ij}^{-1} \zeta_i\zeta_j v_0$.
Thus, since $v_0$ and all $\zeta_i$ are nonzero by definition, we conclude 
that \eqref{eq:Whittaker_cond} must hold.

b) Suppose $(V,v_0)$ is a Whittaker pair with respect to $\zeta\in(\K\backslash\{0\})^n$.
Let $Q=\Ann_R v_0$. Clearly $Q$ is a proper left ideal of $R$. 
For any $r\in Q$ we have $0=X_i rv_0 = \si_i(r)X_i v_0 = \zeta_i\si_i(r) v_0$ which shows that
$\si_i(Q)\subseteq Q$ for any $i\in\{1,\ldots,n\}$.
Since $R$ is Noetherian, $\si_i^{-1}(Q)\subseteq Q$ as well,
which proves that $Q$ is $\Z^n$-invariant.
In addition, if $(V,v_0)$ and $(W,w_0)$ are two isomorphic Whittaker pairs, then
clearly $\Ann_R v_0= \Ann_R w_0$. This shows that the map $\Psi$ is well-defined.

To prove that $\Psi$ is surjective,
 suppose that $Q$ is a proper $\Z^n$-invariant left ideal of $R$.
We show that \eqref{eq:RQAmod} extends the natural $R$-module structure on
$R/Q$ to an $A$-module structure. We only prove that the following
 relations are preserved:
 $X_iY_j=\mu_{ij}Y_jX_i$ ($i\neq j$) and
 $Y_jY_i=\ga_{ij}\mu_{ji}^{-1} Y_iY_j$ ($i\neq j$).
  The other cases are identical to the generalized Weyl algebra case
considered in \cite[Section 3]{BO}. We have
\[X_iY_j .\bar r = X_i.\zeta_j^{-1}\overline{\si_j^{-1}(r)t_j} =
 \zeta_i\zeta_j^{-1} \overline{\si_i\si_j^{-1}(r)\si_i(t_j)}.\]
Using that $\si_i(t_j)=\ga_{ij}t_j$
(see \eqref{eq:A1nsigmagamma})
 and condition \eqref{eq:Whittaker_cond}
we see that $X_iY_j.\bar r=\mu_{ij}Y_jX_i.\bar r$ for any $\bar r\in R/Q$.
Similarly $Y_jY_i.\bar r = \zeta_i^{-1}\zeta_j^{-1}\overline{\si_i^{-1}\si_j^{-1}(r) \si_j^{-1}(t_i)t_j}$
so using $\si_j^{-1}(t_i)t_j=\ga_{ji}^{-1}\ga_{ij} t_i\si_i^{-1}(t_j)$
and \eqref{eq:Whittaker_cond} again, we see that
$Y_jY_i.\bar r=\ga_{ij}\mu_{ji}^{-1} Y_iY_j.\bar r$, $\forall i\neq j$.
Thus $R/Q$ becomes an $A$-module which is a Whittaker module of type $\zeta$
with Whittaker vector $1+Q$.

To prove that $\Psi$ is injective we may, as in \cite{BO},
construct a universal Whittaker module $V_{\mathfrak{u}}$ of type $\zeta$ 
by putting $V_{\mathfrak{u}}=A\otimes_{A_+}\K_\zeta$ where $A_+$ is
the subalgebra of $A$ generated over $\K$ by $X_1,\ldots,X_n$,
and $\K_\zeta$ is the $1$-dimensional module over $A_+$ given by $X_i.1:= \zeta_i$.
The map $\iota:R\to V_{\mathfrak{u}}$, $r\mapsto r\otimes 1$ is an $R$-module isomorphism.
Then there is a unique morphism of Whittaker pairs from $(V_{\mathfrak{u}}, 1\otimes 1)$
to any other Whittaker pair $(V,v_0)$ of type $\zeta$. And, identifying $V_{\mathfrak{u}}$ with $R$ via $\iota$,
 the kernel of the map $V_{\mathfrak{u}}\to V$, is precisely $\Ann_R v_0$.
 So if $(V,v_0)$ and $(W,w_0)$ are two Whittaker pairs
with $\Ann_R v_0= \Ann_R w_0$, it means that they are isomorphic to the
same quotient of the universal Whittaker pair of type $\zeta$, hence
are isomorphic to eachother.

c) If $\varphi: (V,v_0)\to (W,w_0)$ is a morphism of Whittaker pairs, then
$\varphi(rv_0)=r\varphi(v_0)=rw_0$ so clearly $\Ann_R v_0 \subseteq \Ann_R w_0$.
Conversely, if $Q_1\subseteq Q_2$ are proper $\Z^n$-invariant left ideals,
then there is an $R$-module morphism $\pi:R/Q_1\to R/Q_2$ mapping $1+Q_1$ to $1+Q_2$.
Since $\pi$ commutes with the $\Z^n$-action, one verifies that $\pi$
is automatically an $A$-module morphism.
\end{proof}

\begin{cor}
Let $A=\MTWA{n}{k}{r}{s}{\Lambda}/\langle \nf\rangle$ be a simple quotient of a
multiparameter twisted Weyl algebra as obtained in Theorem \ref{thm:family_of_simples}.
Then $A$ has a Whittaker module iff
\begin{equation}\label{eq:mtwa_whittaker_cond}
\la_{ij}=(r_{ij}/r_{ji})^k\quad\forall i,j.
\end{equation}
Moreover, if \eqref{eq:mtwa_whittaker_cond} holds, then for each
$\zeta\in(\K\backslash\{0\})^n$ there is a unique Whittaker module
over $A$ of type $\zeta$, namely the universal one, and it is a simple module.
\end{cor}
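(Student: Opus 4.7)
The plan is to recognize $A\simeq\TGWA{R/R\nf}{\bar\si}{\bar t}{\mu}$ via Theorem \ref{thm:family_of_simples}(b) and apply the preceding theorem with ``base ring'' $R/R\nf$. All hypotheses of that theorem are in place: $A$ is $\K$-finitistic of Lie type $(A_1)^n$, the datum $(R/R\nf,\bar\si,\bar t)$ is $\mu$-consistent, and $R/R\nf$ is Noetherian as a quotient of the Laurent polynomial ring $R$.

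For the first assertion I would translate the theorem's condition $\ga_{ij}=\mu_{ij}$ into the stated condition on $\la_{ij}$. A direct computation using \eqref{eq:mtwa_ti_def}, the definition of $\si_i$ on $u_j,v_j$, and the identity $r_{ij}^k=s_{ij}^k$ for $i\neq j$ gives $\si_i(t_j)=r_{ij}^{-k}t_j$, so $\ga_{ij}=r_{ij}^{-k}$. Combined with $\mu_{ij}=r_{ji}^{-k}\la_{ji}$ and $\la_{ij}\la_{ji}=1$, the equation $\ga_{ij}=\mu_{ij}$ rearranges precisely to \eqref{eq:mtwa_whittaker_cond}; the case $i=j$ is vacuous since $\la_{ii}=1$.

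For the ``moreover'' part, the preceding theorem produces a bijection between isomorphism classes of Whittaker pairs of type $\zeta$ and proper $\Z^n$-invariant left ideals of $R/R\nf$. By Proposition \ref{prp:ideal_bijection}, $R\nf$ is maximal among $\Z^n$-invariant ideals of $R$, so $R/R\nf$ is $\Z^n$-simple; since it is also commutative, its only proper $\Z^n$-invariant left ideal is $\{0\}$. Hence the bijection yields a single isomorphism class, represented by the universal Whittaker module $V:=R/R\nf$ with Whittaker vector $1+R\nf$ and action \eqref{eq:RQAmod}, and any two Whittaker modules of type $\zeta$ are isomorphic.

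To prove $V$ is simple I would argue by contradiction. Given a proper nonzero submodule $W$, the image $v$ of $1+R\nf$ in $V/W$ is nonzero, cyclic, and satisfies $X_i\cdot v=\zeta_i v$, so $(V/W,v)$ is a Whittaker pair of type $\zeta$; its annihilator $Q':=\{\bar r\in R/R\nf\mid \bar r(1+R\nf)\in W\}$ is then a proper nonzero $\Z^n$-invariant ideal of $R/R\nf$, contradicting $\Z^n$-simplicity. The only substantive calculation in the whole argument is the determination of $\ga_{ij}$ in the first step; everything else is bookkeeping on top of the preceding theorem and Proposition \ref{prp:ideal_bijection}.
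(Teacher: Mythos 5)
Your proof is correct, and since the paper states this corollary without proof it is the natural argument to supply. The key computation, $\si_i(t_j)=r_{ij}^{-k}t_j$ for $i\neq j$ (using $r_{ij}^k=s_{ij}^k$), gives $\ga_{ij}=r_{ij}^{-k}$, and then $\ga_{ij}=\mu_{ij}=r_{ji}^{-k}\la_{ji}$ rearranges via $\la_{ij}\la_{ji}=1$ to $\la_{ij}=(r_{ij}/r_{ji})^k$; the case $i=j$ is vacuous by $\la_{ii}=1$. The hypotheses of the Whittaker theorem are indeed met for $R/R\nf$ ($\mu$-consistency passes to the quotient, the quotient is Noetherian, and the type-$(A_1)^n$ property persists since $\bar t_j$ is regular, hence nonzero). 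Your uniqueness and simplicity arguments are both sound: $\Z^n$-simplicity of $R/R\nf$ (Proposition \ref{prp:ideal_bijection}) forces the only proper $\Z^n$-invariant ideal to be $\{0\}$, so the bijection $\Psi$ has a singleton target; and for a proper nonzero submodule $W\subseteq V=R/R\nf$, the image of the cyclic Whittaker vector in $V/W$ again generates a Whittaker pair of type $\zeta$ whose $R'$-annihilator equals $W$, which would be a nonzero proper $\Z^n$-invariant ideal, a contradiction. No gaps.
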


\end{document}